\definecolor{shadecolor}{rgb}{0.9,0.8,0.8}
\theoremstyle{plain}
\newtheorem{thm}{Theorem}[section]
\newtheorem*{thm*}{Theorem}
\newtheorem*{mainthm*}{Main Theorem}
\newtheorem{lem}[thm]{Lemma} \newtheorem*{lem*}{Lemma}
 \newtheorem*{claim*}{Claim}
\newtheorem{cor}[thm]{Corollary} \newtheorem*{cor*}{Corollary}
 \newtheorem*{prop*}{Proposition}
\theoremstyle{definition}
\newtheorem{defn}[thm]{Definition} \newtheorem*{defn*}{Definition}
\newtheorem{case}{Case}
\theoremstyle{remark}
\newtheorem{rem}[thm]{Remark} \newtheorem*{rem*}{Remark}
 \newtheorem*{example*}{Example}
 \newtheorem*{conj*}{Conjecture}
 \newtheorem*{question*}{Question}
\newcommand{\Ord}{\mathrm{Ord}}
\newcommand{\forces}{\mathbin{\Vdash}}
\newcommand{\HOD}{\mathrm{HOD}}
\newcommand{\Hom}{\mathrm{Hom}}
\newcommand{\bfPi}{\bf{\Pi}}
\newcommand{\bfSigma}{\bf{\Sigma}}
\newcommand{\ScS}{\mathrm{ScS}}
\newcommand{\uB}{\mathrm{uB}}
\newcommand{\HC}{\mathrm{HC}}
\newcommand{\lb}{\mathrm{lb}}
\newcommand{\posSigmaoneone}{\mathrm{pos}\Sigma^1_1}
\newcommand{\bbR}{\mathbb{R}}
\newcommand{\ZF}{\mathsf{ZF}}
\newcommand{\ZFC}{\mathsf{ZFC}}
\newcommand{\AD}{\mathsf{AD}}
\newcommand{\DC}{\mathsf{DC}}
\newcommand{\ADR}{\mathsf{AD}_{\bbR}}
\newcommand{\Col}{\mathrm{Col}}
\newcommand{\restrict}{\mathord{\restriction}}
\newcommand{\less}{\mathord{<}}
\DeclareMathOperator{\Ult}{Ult}
\DeclareMathOperator{\ms}{ms}
\DeclareMathOperator{\ran}{ran}
\DeclareMathOperator{\p}{p}
\newcommand{\powerset}{\mathcal{P}}
\begin{document}


\title
[A model of $\mathsf{AD}$ in which every set of reals is $\uB$]
{A model of the Axiom of Determinacy in which every set of reals is universally Baire}

\author{Paul B.\ Larson}\thanks{The research of the first author was partially supported by NSF grants  DMS-0801009, DMS-1201494 and DMS-1764320.}
\address{Department of Mathematics\\ Miami University\\ Oxford, Ohio\\ USA}
\email{larsonpb@miamioh.edu}

\author{Grigor Sargsyan}\thanks{The secind  author’s work is funded by the National Science Center, Poland under the Maestro Call, registration number UMO-2023/50/A/ST1/00258.
This work began at The Mathematisches Forschungsinstitut Oberwolfach, where the second author was a Leibnitz Fellow, in the Spring of 2012. Proofs of the main results were completed by the end of 2017.}
\address{Institute of Mathematics\\
Polish Academy of Sciences\\
8 \'{S}niadeckich Street\\ Warsaw, Poland}
\email{gsargsyan@impan.pl}

\author{Trevor M.\ Wilson}\thanks{The third author was supported by NSF grant DMS-1044150.}
\address{Department of Mathematics\\ Miami University\\ Oxford, Ohio\\ USA}
\email{twilson@miamioh.edu}

\begin{abstract}
The consistency of the theory $\mathsf{ZF} + \ADR + {}$``every set of reals is universally Baire'' is proved relative to $\mathsf{ZFC} + {}$``there is a cardinal that is a limit of Woodin cardinals and of strong cardinals.'' The proof is based on the derived model construction, which was used by Woodin to show  that the theory $\mathsf{ZF} + \ADR + {}$``every set of reals is Suslin'' is consistent relative to  $\mathsf{ZFC} + {}$``there is a cardinal $\lambda$ that is a limit of Woodin cardinals and of $\mathord{<}\lambda$-strong cardinals.'' The $\Sigma^2_1$ reflection property of our model is proved using genericity iterations as in Neeman \cite{NeeHandbook} and Steel \cite{SteSTFree}.
\end{abstract}

\maketitle

\section{Introduction}\label{sec-introduction}


Universal Baireness has come to be seen in modern set theory as a sort of master regularity property for sets of real numbers, implying for instance both Lebesgue measurability and the property of Baire. Moreover, assuming the existence of suitable large cardinals, it is a property shared by every set of reals with a sufficiently simple definition. For instance, if there exist proper class many Woodin cardinals, then every set of reals in the inner model $L(\mathbb{R})$ (the smallest transitive model of $\ZF$ containing the reals and the ordinals) is universally Baire in the full universe $V$ (see, for instance, Theorems 3.3.9 and 3.3.13 of \cite{LarStationaryTower}). In this case, $L(\bbR)$ also satisfies the Axiom of Determinacy ($\AD$). 
However, if $\AD$ holds in $L(\bbR)$ then not every set of reals in $L(\mathbb{R})$ satisfies the definition of universal Baireness in $L(\mathbb{R})$, since in this case $L(\mathbb{R})$ does not satisfy the statement that every set of reals is Suslin (as defined in Definition \ref{Suslindef}; see, for instance, Theorems 6.24 and 6.28 of \cite{Larson:Extensions}). In this paper we produce a model of $\ZF + \AD$ in which every set of reals is universally Baire. The large cardinal hypothesis used in our argument has since been shown to be optimal by Sandra M\"{u}ller \cite{Muller:consistency}.

We let $\omega$ denote the set of nonnegative integers, with the discrete topology, and we let $\omega^\omega$ denote the set of $\omega$-sequences from $\omega$, with the product topology. Following set-theoretic convention, we will also denote $\omega^\omega$ by $\mathbb{R}$ and refer to its elements as \emph{reals}, despite the fact that it is homeomorphic to the space of irrational numbers and not to the real line.

The definition of universal Baireness that we will use in this paper is the set-theoretic definition involving trees, which we postpone to Section \ref{sec-Suslin-uB}.
For now, we remark that every universally Baire
set of reals $A \subseteq \omega^\omega$ has the following property:
\begin{itemize}
 \item[$(\ast)$]
 For every topological space $X$ with a regular open base and every continuous function
 $f \colon X \to \omega^\omega$, the preimage $f^{-1}[A]$ has the Baire property in $X$.
\end{itemize}
Note that while every regular topological space has a regular open basis, some Hausdorff spaces do not. The property $(\ast)$ was the original definition of universal Baireness given by Feng, Magidor, and Woodin in  \cite{FenMagWoo}, where it was shown  to be equivalent in $\mathsf{ZFC}$ to the definition we use in this paper \cite[Corollary 2.1(3)]{FenMagWoo}.
We don't know if the equivalence of the two definitions follows from $\ZF$, but Lemma \ref{uBimpliesstar} shows
that the definition we use implies $(\ast)$, so that all sets of reals in the model we will build are universally Baire according to both definitions.

The following is an immediate consequence of our main theorem (for which see Section \ref{amodelsec}). 

\begin{thm}\label{thm-relative-consistency}
 If the theory $\mathsf{ZFC} + {}$``there is a cardinal that is a limit of Woodin cardinals and a limit of strong cardinals'' is consistent, then so is the theory $\mathsf{ZF} + \ADR + {}$``every set of reals is universally Baire.''
\end{thm}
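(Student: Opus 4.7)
The plan is to construct, from a $\ZFC$ model witnessing the stated large cardinal hypothesis, a specific inner model of a symmetric forcing extension that satisfies $\ZF + \ADR + {}$``every set of reals is universally Baire.'' This is the model promised by the Main Theorem of Section \ref{amodelsec}, from which Theorem \ref{thm-relative-consistency} follows by the standard relative consistency argument.

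Starting with $V \models \ZFC$ and a cardinal $\lambda$ that is a limit of Woodin cardinals and of strong cardinals, I would force with the Levy collapse $\Col(\omega, \less\lambda)$ to obtain a generic filter $G$. The target model is the derived model at $\lambda$: namely, $D = L(\bbR^*, \Hom^*)$, where $\bbR^*$ consists of the symmetric reals (those appearing in some $V[G\restrict\alpha]$ for $\alpha < \lambda$) and $\Hom^*$ is the collection of sets arising as $\bigcup_\alpha p[T_\alpha]$ from coherent sequences of $\less\lambda$-homogeneously Suslin trees in the intermediate models. Woodin's derived model theorem, which requires only the weaker hypothesis that $\lambda$ be a limit of Woodin cardinals and of ${<}\lambda$-strong cardinals, already yields that $D \models \ZF + \ADR$ and that every set of reals in $D$ is Suslin.

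The new content is to upgrade ``Suslin'' to ``universally Baire'' in $D$, using the strengthened hypothesis of genuinely strong cardinals cofinal in $\lambda$. As flagged in the abstract, this is done via a $\Sigma^2_1$ reflection property for $D$, established through genericity iterations in the style of Neeman \cite{NeeHandbook} and Steel \cite{SteSTFree}. Given $A \in D$ with a Suslin representation $A = p[T]$ and a dual tree $U$ with $p[U] = \bbR \setminus A$, the aim is to show that in every generic extension $D[g]$ the trees $T$ and $U$ continue to project to complementary sets, which is precisely the tree-definition of universal Baireness. Genericity iterations will realize the reals of $D[g]$ as branches through countable iterates of suitable hulls taken inside $V[G]$; by the absoluteness of homogeneous Suslinness along such iterates, one transfers complementarity of $p[T]$ and $p[U]$ from the iterated hull back to $D[g]$. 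Combined with Lemma \ref{uBimpliesstar}, this yields property $(\ast)$ for every set of reals in $D$ as well.

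The main obstacle I expect is the $\Sigma^2_1$ reflection itself: verifying that the strong cardinals cofinal in $\lambda$ (rather than mere ${<}\lambda$-strongs) supply the extenders needed to iterate through \emph{arbitrary} generics over $D$, not just the symmetric extensions that the derived model construction handles for free. Strong cardinals provide extenders with arbitrarily high critical points and enough closure so that every relevant forcing poset can be ``caught'' by an iteration of a countable hull, witnessing any $\Sigma^2_1$ truth in $D$ inside an iterable structure in $V[G]$. Once this reflection is in place, universal Baireness of every set of reals in $D$ is immediate, and Theorem \ref{thm-relative-consistency} follows.
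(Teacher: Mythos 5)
The central gap in your proposal is that you work with the plain derived model $D = L(\bbR^*_G, \Hom^*_G)$, whereas the paper's model is $\mathcal{M} = L^F(\bbR^*_G, \Hom^*_G)^{V(\bbR^*_G)}$, constructed \emph{relative to the predicate} $F$ that records, for each $(A,\mathbb{P},p,\dot{x})$ with $A$ universally Baire, whether $p$ forces $\dot{x}$ into the canonical expansion of $A$. This predicate is not a technical convenience but the essential new ingredient. The point is that a set $A \in \Hom^*_G$ is universally Baire in $V(\bbR^*_G)$ (using the strong cardinals cofinal in $\lambda$), but this does not obviously pass down to the inner model $D$: $D$ has Suslin representations of its sets, but those trees are not automatically absolutely complementing over $D$'s own posets, and $D$ does not by itself have access to the Martin--Solovay data in $V[G\restriction\xi]$ that witness absolute complementation. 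The paper's Theorem \ref{FreflectsuBthrm} shows how the inner model can \emph{build} its own absolutely complementing trees from semiscale codes, precisely by using the $F$ predicate to read off how canonical expansions behave, together with the homogeneity of $\Col(\omega,Z)$. Whether the unaugmented $L(\bbR^*_G,\Hom^*_G)$ satisfies ``every set of reals is universally Baire'' is not established and is not what the paper proves.

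A secondary issue is your account of where $\Sigma^2_1$ reflection is used. In the paper, the $\Sigma^2_1$ reflection lemma (Lemma \ref{lem-Sigma-2-1-reflection}) is used to establish that $\mathcal{M} \models \AD^+$, not to establish universal Baireness directly. Universal Baireness in $\mathcal{M}$ comes from Lemma \ref{lem-absorption-of-uB}, which combines the semiscale property of $\Hom^*_G$ (a consequence of Woodin's scale analysis for the derived model at a limit of Woodin cardinals) with Theorem \ref{FreflectsuBthrm}. Your picture --- iterating countable hulls to ``catch'' arbitrary generics over $D$ and thereby transfer complementarity of $p[T]$ and $p[U]$ into $D[g]$ --- is not what the genericity iterations are doing here; they produce a reflection of $\Sigma^2_1$ truths of $\mathcal{M}$ into $\Hom_{<\lambda}^V$ for the purpose of verifying determinacy. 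Without the $F$ predicate, the subsequent transfer of universal Baireness into the inner model does not go through as you have sketched it.
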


The axiom $\ADR$ asserts the determinacy of all two-player games of length $\omega$ on the real numbers. The relationships between $\AD$, $\ADR$ and $\AD^{+}$ (a technical strengthening of $\AD$ whose definition we give in Section \ref{amodelsec}) are discussed in Section \ref{amodelsec}; see, for instance, \cite{Larson:Extensions} for more background. 


Section \ref{sec-Suslin-uB} of the paper reviews universally Baire sets, and Section \ref{semiscalesec} presents some material on semiscales. The models we consider in this paper are introduced in Section \ref{canmodelsec}. The main theorem of the paper is stated in Section \ref{amodelsec}, which also contains a review of symmetric extensions and homogeneously Suslin sets. The proof of the main theorem is given in Section \ref{amodelsec}, relative to three facts proved in later sections, including the  $\Sigma^{2}_{1}$-reflection property of our model. Sections \ref{sec-R-genericity-iterations} (on genericity iterations) and \ref{sec-F-absoluteness} (on absoluteness) develop results needed for the proof of $\Sigma^{2}_{1}$ reflection, which is given in Section \ref{sec-sigma-2-1-reflection}. Finally, Section \ref{sec-DM-at-limit-of-lt-lambda-strongs} proves a theorem of Woodin (used in the proof of the main theorem) whose proof has not previously appeared in print, and uses the machinery from this proof to fill in the last remaining detail of the proof of the main theorem. 



\begin{rem}\label{rem-woodin-remark}
The existence of a model of $\ZF + \ADR$ in which all sets of reals are universally Baire was independently proved by Hugh Woodin from the stronger large cardinal hypothesis asserting the existence of proper class many Woodin limits of Woodin cardinals. 
From this hypothesis he produced an inner model satisfying $\mathsf{ZF} + \mathsf{DC} + \mathsf{AD}^+ + {}$``$\omega_1$ is supercompact,'' which in turn implies that every set of reals is universally Baire, and therefore that $\ADR$ holds. (In the choiceless context, supercompactness is defined in terms of normal fine measures.)
 Woodin's model is an enlargement of the Chang model $L(\Ord^\omega)$ obtained by adding a predicate for the club filter on $\powerset_{\omega_1}(\lambda^\omega)$ for every ordinal $\lambda$.
 Woodin showed that these predicates restrict to ultrafilters on the model.
 The theory $\mathsf{ZF} + \mathsf{DC} + \mathsf{AD} + {}$``$\omega_1$ is supercompact'' implies that
 the pointclass of Suslin sets of reals is closed under complementation, by a theorem of Martin and Woodin \cite{MarWooWeaklyHom}. Together with $\mathsf{AD}^+$, this implies that every set of reals is Suslin.
 Using the supercompactness of $\omega_{1}$ again to take ultrapowers of trees, one can then show that every set of reals is universally Baire.
\end{rem}

\begin{rem}
	A model of determinacy in which all sets of reals are universally Baire can be shown to exist from the theory $\ZF$ + $\AD^+$ plus the assumption that there is a limit ordinal $\alpha$ such that $\theta_\alpha$ (the $\alpha$-th member of the Solovay sequence; see Section 6.3 of \cite{Larson:Extensions}) is less than $\Theta$.  Assume $V$ is a model of this theory.
	To construct a model of $\AD^+$ in which all sets of reals are universally Baire, let $\Delta$ be the set of reals of Wadge rank less than $\theta_\alpha$. Then $M=_{def}V_{\theta_{\alpha+1}}\cap \HOD_{\Delta}$ is a model of $\ZF+ \ADR$ in which all sets of reals are universally Baire. This follows from the fact that in $V$, every set in $\Delta$ is $\kappa$-homogenously Suslin for every $\kappa<\theta_\alpha$. We do not know if the proof of this fact has appeared in print, but it can be proven by combining the following facts: Theorem 7.5 and Lemma 7.7 of \cite{SteDMT}, and the results of \cite{KKMW}. See also \cite{MarWooWeaklyHom}. We then use Lemma 7.7 of  \cite{SteDMT} to conclude that $M$ is a model of $\ZF+ \ADR$ in which all sets of reals are universally Baire. See also Lemma 7.6 of \cite{SteDMT}.

We also remark that if $\theta_\alpha$ is a regular cardinal of $\HOD$ then $M$ defined above will satisfy the theory $\ADR+``\Theta$ is a regular cardinal" (see \cite{CLSSSZSquares}). By the results of \cite{LSA}, in the minimal model of the Largest Suslin Axiom there is $\alpha$ such that $\theta_\alpha<\Theta$ and $\theta_\alpha$ is regular in $\HOD$. Thus, a model of the theory $\ADR+``\Theta$ is a regular cardinal"+``All sets of reals are universally Baire" can be constructed inside the minimal model of the Largest Suslin Axiom. 

\end{rem}
	 

\section{Trees, Suslin sets, and universally Baire sets}\label{sec-Suslin-uB}

In this section, we work in $\mathsf{ZF}$, without the Axiom of Choice. 
A \emph{tree} on a class $X$ is a set of finite sequences $T \subseteq X^{\mathord{<}\omega}$
that is closed under initial segments.
For such a tree we let $[T]$ denote the set of all branches (infinite chains) of $T$, so that $[T]\subseteq X^\omega$.
Note that a set $A \subseteq X^\omega$ is closed in the $\omega$-fold product of the discrete topology on $X$ if and only if $A = \p[T]$ for some tree $T$ on $X$.
For a tree on a product $X \times Y$, we will identify sequences of pairs with pairs of sequences (always of the same length),
so for such a tree we may write $[T] \subseteq X^\omega \times Y^\omega$.

The trees we consider will usually be trees on the class $\omega \times \Ord$ where $\Ord$ is the class of ordinals.\footnote{A set $T$ is a tree on $\omega \times \Ord$ if and only if it is a tree on $\omega \times \kappa$ for some cardinal $\kappa$.
In this paper, we will usually not be concerned with what $\kappa$ is, so we will speak of trees on $\omega \times \Ord$ for simplicity.}
By the \emph{projection} of such a tree $T$, we mean the projection of $[T]$ onto its first coordinate, which is a set of reals:
\[ \p[T] = \{x \in \omega^\omega : \exists f \in \Ord^\omega\, (x,f) \in [T]\}.\]
An important fact that we will often use without further comment is that membership in the projection of a tree is absolute: if $M$ is a transitive model of $\mathsf{ZF}$ containing a real $x$ and a tree $T$ on $\omega \times \Ord$, then $x \in \p[T]$ if and only if $M \models x \in \p[T]$.
This follows from the absoluteness of wellfoundedness of the tree $T_{x} = \{t \in \Ord^{\mathord{<}\omega} : (x 
\restrict \left|t\right|, t) \in T\}$.
The same fact applies with $V$ and a generic extension of $V$ in place of $M$ and $V$ respectively.

\begin{defn}\label{Suslindef}
 A set of reals $A$ is \emph{Suslin} if $A = \p[T]$ for some $T$ on $\omega \times \Ord$. Given a set $X$, $A$ is $X$-Suslin if it is the projection of a tree on $\omega \times X$.
\end{defn}


Recall that a set of reals $A$ is ${\bf \Sigma}^1_1$ (analytic) if and only if $A = \p[T]$ for some $T$ on $\omega \times \omega$, so the Suslin sets generalize the analytic sets.


For a set $A \subseteq \omega^\omega \times \omega^\omega$, we can define ``$A$ is Suslin'' and ``$A$ is $\kappa$-Suslin'' in terms of trees on $\omega \times \omega \times \Ord$ and $\omega \times \omega \times \kappa$ respectively, and similarly for 
$A \subseteq \omega^\omega \times \omega^\omega \times \omega^\omega$ and so on.
In this paper we will typically state and prove results for sets of reals and leave the straightforward higher-dimensional generalizations to the reader.


%
%

Suslin sets of reals are important objects of study under the Axiom of Determinacy.
Under the Axiom of Choice, however, every set of reals is Suslin (in a useless way).
More generally, every wellordered set of reals $A = \{x_\alpha : \alpha < \kappa\}$ is Suslin as witnessed by the tree \[\{(x_\alpha \restrict n, \bar{\alpha} \restrict n) : \alpha < \kappa \text{ and } n \in \omega\}\] on $\omega \times \kappa$, where $\bar{\alpha}$ denotes the infinite sequence with constant value $\alpha$. 
Universal Baireness is a strengthening of Suslinness that is nontrivial in the presence of the Axiom of Choice.

Instead of the original definition of universal Baireness, which we called $(\ast)$ above,
we will use the following definition. 

\begin{defn}\label{defn-uB}
Let $\mathbb{P}$ be a poset. 
\begin{itemize}
 \item A pair of trees $(T,\tilde{T})$ on $\omega \times \Ord$ is
 \emph{$\mathbb{P}$-absolutely complementing} if $1_{\mathbb{P}} \forces \p[T] = \omega^\omega\setminus \p[\tilde{T}]$.
 \item A set of reals $A$ is \emph{$\mathbb{P}$-Baire} if $A = \p[T]$ for some $\mathbb{P}$-absolutely complementing pair of trees $(T,\tilde{T})$ on $\omega \times \Ord$.
\end{itemize}
A set of reals $A$ is \emph{universally Baire} if it is $\mathbb{P}$-Baire for every poset $\mathbb{P}$. We write $\uB$ for the pointclass $\{A \subseteq \omega^\omega : A \text{ is universally Baire}\}$. 
\end{defn}

\begin{rem}\label{collapserem}
	If a poset $\mathbb{P}$ regularly embeds into a poset $\mathbb{Q}$, then every $\mathbb{Q}$-Baire set of reals is $\mathbb{P}$-Baire. A classical result of McAloon says that every partial order $\mathbb{P}$ regularly embeds into the partial order $\Col(\omega, \mathbb{P})$, which adds a surjection from $\omega$ to $\mathbb{P}$ by finite approximations (see, for instance, the appendix to \cite{LarStationaryTower}). To show that a set of reals is universally Baire, then, it suffices to show that it is $\Col(\omega, Z)$-Baire for every set $Z$. In the context of the Axiom of Choice, the set $Z$ can be taken to be an infinite cardinal. We show in Lemma \ref{lem-ordinal-ac-equivalent} that a similar implication holds in the models we consider in this paper. 
\end{rem}

It is immediate from the definition that every universally Baire set of reals is Suslin, and that the collection of universally Baire sets is closed under complements. 
The proof of Shoenfield's absoluteness theorem shows that all ${\bf \Sigma}^1_1$ (analytic) sets of reals are universally Baire, from which it follows that all ${\bf \Pi}^1_1$ (coanalytic) sets are as well. 
Universal Baireness for more complex sets of reals is tied to large cardinals.
For example, every $\bfSigma^1_2$ set of reals is universally Baire if and only if every set has a sharp, as shown by by Feng, Magidor, and Woodin  \cite[Theorem~3.4]{FenMagWoo}.
As noted above, if there is a proper class of Woodin cardinals, then every set of reals in $L(\mathbb{R})$ is universally Baire in $V$.
%

Lemma \ref{uBimpliesstar} below shows that universal Baireness implies property $(\ast)$ in $\mathsf{ZF}$.
We do not know whether the converse can be proved in $\mathsf{ZF}$. The proof uses the standard notion of the leftmost branch of a tree. That is, if $S$ is an illfounded tree on $\Ord$, the \emph{leftmost branch} of $S$ is the sequence $\lb(S) \in \Ord^{\omega}$ defined recursively by letting $\lb(S)(n)$ be the least ordinal $\alpha$ such that the tree \[\{ s \in S : s \subseteq (\lb(S)\restrict n)^{\frown}\langle \alpha \rangle \vee  (\lb(S)\restrict n)^{\frown}\langle \alpha \rangle \subseteq s\}\] is illfounded. Usually this operation is applied to a tree of the form $T_{x}$ (as above), where $x \in \omega^{\omega}$ and $T$ is a tree on $\omega \times \Ord$, to find a witness to the statement $x \in \p[T]$.

\begin{lem}\label{uBimpliesstar}
 Let $A \subseteq \omega^\omega$ be universally Baire, let $X$ be a topological space with a regular open base, and let $f \colon X \to \omega^\omega$ be a continuous function.  Then the preimage $f^{-1}[A]$ has the Baire property in $X$.
\end{lem}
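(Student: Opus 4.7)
The plan is to use the complete Boolean algebra $\mathbb{B}$ of regular open subsets of $X$ as the forcing poset and exploit universal Baireness of $A$ with respect to $\mathbb{P} := \mathbb{B} \setminus \{\emptyset\}$, ordered by inclusion. Fix a $\mathbb{P}$-absolutely complementing pair $(T, \tilde T)$ witnessing that $A = \p[T]$. For each $s \in \omega^{\mathord{<}\omega}$, let $V_s = f^{-1}[N_s]$ (open by continuity of $f$) and let $U_s = \mathrm{int}(\overline{V_s})$ be its regular open hull. A standard point-set argument shows that $\{U_s : s \in \omega^n\}$ partitions $X$ into regular open pieces for each $n$ (a common point of $U_s$ and $U_{s'}$ would give, via the open neighborhood $U_{s'}$, a point of $V_s$ with every neighborhood meeting $V_{s'}$, contradicting disjointness). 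This lets us define a name $\dot x \in V^{\mathbb{B}}$ for an element of $\omega^\omega$ by the requirements $U_s \forces \dot x \restrict |s| = s$.

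Let $W := \bigvee \{U \in \mathbb{P} : U \forces \dot x \in \p[T]\}$, a regular open subset of $X$. By $\mathbb{P}$-absolute complementing, the Boolean complement $\neg W \in \mathbb{B}$ is the analogous join for $\tilde T$. The core claim is that the symmetric difference $f^{-1}[A] \mathbin{\triangle} W$ is meager in $X$; since $W$ is open, this yields the Baire property. To show $W \setminus f^{-1}[A]$ is meager, invoke the Maximality Principle for a name $\dot g$ with $W \forces (\dot x, \dot g) \in [T]$. For each $n$, let $D_n$ be the union of all $U \in \mathbb{P}$ with $U \subseteq W$ for which some $(s,t) \in T$ with $|s| = |t| = n$ satisfies $U \subseteq U_s$ and $U \forces \dot g \restrict n = t$. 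Each $D_n$ is open dense in $W$, so $E := \bigcap_n D_n$ is comeager in $W$. For $x \in E$ and each $n$, pick a witnessing pair $(s_n, t_n) \in T$; the disjointness of the $U_s$ forces $s_n = f(x) \restrict n$, while the functionality of $\dot g$ forces $t_m \subseteq t_n$ for $m \le n$, so the $t_n$ cohere into some $g \in \Ord^\omega$ with $(f(x), g) \in [T]$, giving $f(x) \in A$. Thus $W \setminus f^{-1}[A]$ is meager. Symmetrically, $\neg W \cap f^{-1}[A]$ is meager, and since $X \setminus W$ differs from $\neg W$ only by the nowhere-dense set $\partial W$, we conclude that $f^{-1}[A] \setminus W$ is meager as well.

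The main obstacle is the translation between forcing-theoretic and topological genericity: one must verify that, for a Baire-category-generic point $x$ of $W$, the countably many dense open sets $D_n$ supply enough concrete ground-model data to reconstruct a genuine branch of $T$ above $f(x)$, even though no actual $V$-generic filter on $\mathbb{P}$ need exist at $x$. The regular-open-base hypothesis enters precisely to ensure that the sets $U_s$ and $W$ lie in a base for the topology on $X$, so that the construction produces an honest open approximation exhibiting the Baire property of $f^{-1}[A]$.
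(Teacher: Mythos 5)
Your proposal follows essentially the same topological/forcing strategy as the paper's proof, but it contains one step that is not available in the ambient theory. The lemma is proved in $\ZF$ (Section~\ref{semiscalesec} works explicitly without Choice, and the paper notes that the original Feng--Magidor--Woodin argument ``uses Choice for some steps''). Your invocation of the Maximality Principle to produce a name $\dot g$ with $W \forces (\dot x, \dot g) \in [T]$ is exactly such a Choice-dependent step: fullness for Boolean-valued models is not a $\ZF$ theorem in general, since it requires selecting a witness name below each member of a maximal antichain. What rescues the argument here --- and what the paper does in its place --- is that $T$ is a tree on $\omega \times \Ord$, so that for any real in $\p[T]$ there is a canonical, definable branch, namely the leftmost one. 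The paper therefore replaces ``a name for some branch'' with ``conditions deciding the first $n$ entries of the leftmost branch of $T_{\dot g}$ (resp.\ $\tilde T_{\dot g}$),'' and defines the dense sets $\mathcal{D}_n$ accordingly; this is precisely the ``minor modification'' of the Feng--Magidor--Woodin argument that the paper advertises. If you replace your Maximality Principle step with the leftmost-branch name, your argument coincides with the paper's.

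Two small further remarks. First, since each $N_s$ is clopen in $\omega^\omega$ and $f$ is continuous, the set $V_s = f^{-1}[N_s]$ is already clopen, hence regular open; taking the regular-open hull $\mathrm{int}(\overline{V_s})$ is harmless but unnecessary, and disjointness of the $U_s$ of a fixed length is then immediate. Second, the phrase ``pick a witnessing pair $(s_n,t_n)$'' for each $n$ looks like countable choice, but no choice is actually needed: any two conditions through $x$ deciding $\dot g \restrict n$ are compatible and must decide it the same way, and $s_n$ is forced to be $f(x)\restrict n$; stating this explicitly keeps the argument honestly within $\ZF$ (the paper's proof has the same cosmetic issue and the same resolution).
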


\begin{proof}
 The proof is a minor modification of the proof given in \cite{FenMagWoo}, which uses Choice for some steps. Let $\operatorname{RO}(X)$ denote the collection of all regular open subsets of $X$, which is a complete boolean algebra with negation given by the complement of the closure, and suprema given by the interior of the closure of the union. Considering $\mathrm{RO}(X)$ as a poset (under inclusion, with the emptyset excluded), let $S$ and $T$ be trees witnessing that $A$ is $\mathrm{RO}(X)$-Baire, with $\p[S] = A$. For each pair $n,m \in \omega$, let $X_{n,m}$ be the set of $x \in X$ with $f(x)(n) = m$. Then each $X_{n,m}$ is in $\mathrm{RO}(X)$. Let $\dot{g}$ be the $\operatorname{RO}(X)$-name for an element of $\omega^{\omega}$ consisting of the pairs $(X_{n,m}, (n,m))$ for each $n, m \in \omega$, so that each $X_{n,m}$ forces that $\dot{g}(n) = m$. 
 
Densely many conditions in $\mathrm{RO}(X)$ decide whether the realization of $\dot{g}$  is in the projection of $S$ or $T$. Let $U$ be the union of the conditions forcing $\dot{g}$ into $\p[S]$, and let $V$ be the corresponding set for $T$. Since intersections of regular open sets are regular open, $U$ and $V$ are disjoint. Since $X$ has a regular open base, $U \cup V$ is dense. It suffices to see that $f^{-1}[A] \bigtriangleup U$ is meager in $X$. 
 
 
 For each $\sigma \in \omega^{<\omega}$, let $X_{\sigma}$ be $\bigcap_{n < |\sigma|}X_{n,\sigma(n)}$, i.e., the set of $x \in X$ for which $\sigma$ is an initial segment of $f(x)$. For each $n \in \omega$, let $\mathcal{D}_{n}$ be the set of conditions $Y$ of $\mathrm{RO}(X)$ which are contained in either $U$ or $V$ and also in some set of the form $X_{\sigma}$, for some $\sigma \in \omega^{n}$, and which decide the first $n$ elements of the leftmost branch of whichever of $S_{\dot{g}}$ or $T_{\dot{g}}$ is illfounded (i.e., $S_{\dot{g}}$ for sets contained in $U$ and $T_{\dot{g}}$ for sets contained in $V$). Let $\tau_{n}(Y)$ denote the length-$n$ initial segment of this leftmost branch as decided by $Y$. Note then that if $Y \in \mathcal{D}_{n}$ is contained in some such $X_{\sigma}$, then the pair $(\sigma,\tau_{n}(Y))$ is in the corresponding tree.  
 
 For each $n \in\omega$ let $D_{n}$ be the union of all the members of $\mathcal{D}_{n}$. Then each $D_{n}$ is dense open.  It suffices now to see that if $x$ is in $U \cap \bigcap_{n \in \omega}D_{n}$ then $f(x) \in A$, and if $x \in V \cap \bigcap_{n \in \omega}D_{n}$ then $f(x) \not\in A$. 
 
 To see the former claim, fix $x \in U$ and regular open sets $Y_{n} \in \mathcal{D}_{n}$ $(n \in \omega)$ with $x \in \bigcap_{n \in \omega} Y_{n}$. Each $Y_{n}$ is contained in some $X_{\sigma_{n}}$, in such a way that $f(x) = \bigcup_{n\in\omega}\sigma_{n}$. Furthermore, since the conditions $Y_{n}$ are compatible, the values $\tau_{n}(Y_{n})$ are compatible, and the pair $(f(x), \bigcup_{n \in \omega}\tau_{n}(Y_{n}))$ gives a branch through $S$, showing that $f(x) \in A$. The proof for the latter claim is the same. 
 \end{proof}
 
 

As a trivial consequence of the previous lemma, if $A$ is universally Baire then $A$ itself has the Baire property.
Note that if the Axiom of Choice holds then there is a set of reals that does not have the Baire property and therefore is not universally Baire.

%
%
%
%
%
%
%
%
%

The following standard lemma on $\mathbb{P}$-absolutely complementing pairs of trees is often useful.


\begin{lem}\label{lem-projection-inclusion}
 Let $T$ be a tree on $\omega \times \Ord$, let $\mathbb{P}$ be a poset, and let $(U, \tilde{U})$ be a $\mathbb{P}$-absolutely complementing pair of trees on $\omega \times \Ord$.
 If $\p[T] \subseteq \p[U]$, then $1_{\mathbb{P}} \forces \p[T] \subseteq \p[U]$.
\end{lem}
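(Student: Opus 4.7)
The plan is to reduce the conclusion to the absoluteness of wellfoundedness for trees on the ordinals, the same device emphasized in the preceding discussion. Form the auxiliary tree $S$ on $\omega \times (\Ord \times \Ord)$ whose nodes are triples $(s, t, u)$ with $(s, t) \in T$ and $(s, u) \in \tilde{U}$ of common length, so that $\p[S] = \p[T] \cap \p[\tilde{U}]$ both in $V$ and in every generic extension of $V$.

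The first step is to observe that the complementing hypothesis, although phrased as a forcing statement, already implies $\p[U] \cap \p[\tilde{U}] = \emptyset$ in $V$ itself: any ordinal branches witnessing a common projection point in $V$ would persist to $V[G]$ and contradict $1_{\mathbb{P}} \forces \p[U] = \omega^\omega \setminus \p[\tilde{U}]$. Combined with the hypothesis $\p[T] \subseteq \p[U]$, this gives $\p[T] \cap \p[\tilde{U}] = \emptyset$ in $V$; equivalently, $S$ is wellfounded in $V$. Because $S$ is a tree on the ordinals, its wellfoundedness transfers to every generic extension $V[G]$, so $\p[T] \cap \p[\tilde{U}] = \emptyset$ in $V[G]$ as well. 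Applying the complementing property inside $V[G]$, any $x \in \p[T]^{V[G]}$ lies outside $\p[\tilde{U}]^{V[G]}$ and hence inside $\p[U]^{V[G]}$, yielding $\p[T] \subseteq \p[U]$ in $V[G]$. Since $G$ was arbitrary, $1_{\mathbb{P}} \forces \p[T] \subseteq \p[U]$.

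No substantive obstacle is anticipated; the argument is a routine use of wellfoundedness absoluteness at the level of trees, rather than an attempt to manipulate names for reals directly (which would be awkward, since the real $x$ in question might be genuinely new to $V[G]$). The only conceptual point worth flagging is the first step, where a statement about the forcing relation is pulled back to $V$ itself via persistence of tree branches under forcing.
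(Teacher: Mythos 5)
Your proposal is correct and matches the paper's proof: both form the auxiliary tree of triples $(s,t,u)$ with $(s,t)\in T$ and $(s,u)\in\tilde{U}$, observe it is wellfounded in $V$ since $\p[T]\cap\p[\tilde{U}]=\emptyset$, and transfer this to $V[G]$ by absoluteness of wellfoundedness before invoking the complementing property there. Your explicit remark that the complementing hypothesis already forces $\p[U]\cap\p[\tilde{U}]=\emptyset$ in $V$ itself (via persistence of branches) is a small clarification of a step the paper leaves implicit, not a different argument.
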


\begin{proof}
 If $\p[T] \subseteq \p[U]$, then $\p[T] \cap \p[\tilde{U}] = \emptyset$.
 This implies that \[1_{\mathbb{P}} \forces \p[T] \cap \p[\tilde{U}] = \emptyset,\] by the absoluteness of wellfoundedness of the tree \[\{(s,t,u) : (s,t) \in T \wedge (s,u) \in \tilde{U}\}\] on $\omega \times \Ord \times \Ord$.
 Because 
$(U, \tilde{U})$ is $\mathbb{P}$-absolutely complementing, it follows that 
$1_{\mathbb{P}} \forces \p[T] \subseteq \p[U]$.
\end{proof}

Applying Lemma \ref{lem-projection-inclusion} in both directions, we immediately obtain the following result, which shows that although there may not be a canonical way to choose trees witnessing universal Baireness of a given set of reals, any such pair of trees gives canonical information about how to expand the set of reals in generic extensions.

\begin{lem}\label{lem-unique-expansion}
 Let $\mathbb{P}$ be a poset, let $A$ be a $\mathbb{P}$-Baire set of reals, and let $(T,\tilde{T})$ and $(U, \tilde{U})$ be $\mathbb{P}$-absolutely complementing pairs of trees witnessing that $A$ is $\mathbb{P}$-Baire.
 Then $1_{\mathbb{P}} \forces \p[T] = \p[U]$.
\end{lem}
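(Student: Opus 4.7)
The plan is to apply Lemma \ref{lem-projection-inclusion} twice, once in each direction, and then combine the two resulting forcing statements. Since both $(T,\tilde{T})$ and $(U,\tilde{U})$ are $\mathbb{P}$-absolutely complementing pairs witnessing that $A$ is $\mathbb{P}$-Baire, we have $\p[T] = A = \p[U]$ in $V$. In particular $\p[T] \subseteq \p[U]$ and $\p[U] \subseteq \p[T]$, which are the set-theoretic hypotheses needed to invoke Lemma \ref{lem-projection-inclusion}.

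First I would apply Lemma \ref{lem-projection-inclusion} with the tree $T$ on the left and the $\mathbb{P}$-absolutely complementing pair $(U,\tilde{U})$ on the right: from $\p[T] \subseteq \p[U]$ we conclude $1_{\mathbb{P}} \forces \p[T] \subseteq \p[U]$. Next, swapping the roles, I would apply the lemma with the tree $U$ on the left and the $\mathbb{P}$-absolutely complementing pair $(T,\tilde{T})$ on the right: from $\p[U] \subseteq \p[T]$ we conclude $1_{\mathbb{P}} \forces \p[U] \subseteq \p[T]$. Combining the two forcing statements yields $1_{\mathbb{P}} \forces \p[T] = \p[U]$, as required.

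There is no real obstacle here; the content of the lemma lies entirely in Lemma \ref{lem-projection-inclusion}, whose proof encodes the disjointness $\p[T] \cap \p[\tilde{U}] = \emptyset$ via absoluteness of wellfoundedness of a tree on $\omega \times \Ord \times \Ord$. The present lemma is then a purely formal symmetrization: it records that whenever one has two pairs of trees each certifying absolute complementation over $\mathbb{P}$, their projections must continue to agree in every $\mathbb{P}$-generic extension, so that the $\mathbb{P}$-generic expansion of $A$ depends only on $A$ and $\mathbb{P}$, not on the chosen witnessing trees.
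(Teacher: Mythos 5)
Your proposal is correct and matches the paper's argument exactly: the paper observes that the lemma follows immediately by applying Lemma \ref{lem-projection-inclusion} in both directions, which is precisely what you do.
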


The following notation is therefore well-defined.

\begin{defn}
 For a poset $\mathbb{P}$, a $\mathbb{P}$-Baire set of reals $A$, and a $V$-generic filter $G \subseteq \mathbb{P}$,
 taking any $\mathbb{P}$-absolutely complementing pair of trees $(T,\tilde{T})$ witnessing that $A$ is $\mathbb{P}$-Baire,
 the \emph{canonical expansion of $A$ to $V[G]$} is 
 \[A^{V[G]} = \p[T]^{V[G]}.\]
\end{defn}

\begin{rem}\label{canexprem} The canonical expansion $A^{V[G]}$ depends only on the model $V[G]$ and not the generic filter $G$. 
\end{rem}

We will sometimes use the following local version of universal Baireness, following Steel \cite{SteDMT} and Larson \cite{LarStationaryTower}.

\begin{defn}\label{defn-lt-kappa-uB}
Let $\kappa$ be a cardinal.
\begin{itemize}
 \item A pair of trees $(T,\tilde{T})$ on $\omega \times \Ord$ is
 $\kappa$-\emph{absolutely complementing} if
 it is $\mathbb{P}$-absolutely complementing for every 
 (wellordered) poset $\mathbb{P}$ of cardinality less than $\kappa$.
 \item A set of reals $A$ is \emph{$\kappa$-universally Baire} if $A = \p[T]$ for some $\kappa$-absolutely complementing pair of trees $(T,\tilde{T})$ on $\omega \times \Ord$.
\end{itemize}
We write $\uB_{\kappa}$ for the pointclass $\{ A \subseteq \omega^\omega : A \text{ is $\kappa$-universally Baire}\}$. 
\end{defn}

The reader should be warned that because 
our definition says ``cardinality less than $\kappa$'' rather than ``cardinality less than or equal to $\kappa$,'' what we call $\kappa^+$-universally Baire is equivalent
in $\mathsf{ZFC}$ to what Feng, Magidor, and Woodin call $\kappa$-universally Baire \cite[Theorem 2.1]{FenMagWoo}.

One way to produce $\kappa$-absolutely complementing trees is the Martin--Solovay construction from a system of $\kappa$-complete measures (which we briefly review in Section \ref{amodelsec}; see also Section 1.3 of \cite{LarStationaryTower}).
Another way, assuming the Axiom of Choice, is to amalgamate $\mathbb{P}$-absolutely complementing trees for various posets $\mathbb{P}$.
This approach yields the following result, whose well-known proof we include here for the reader's convenience.

\begin{lem}
 Assume $\mathsf{ZFC}$, let $\kappa$ be a cardinal, and let $A$ be a set of reals.
 If $A$ is $\mathbb{P}$-Baire for every poset $\mathbb{P}$ of cardinality less than $\kappa$,
 then $A$ is $\kappa$-universally Baire.
\end{lem}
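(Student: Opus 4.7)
The plan is to amalgamate the hypothesized witnesses into a single absolutely complementing pair. First I would invoke Remark~\ref{collapserem}: any poset of cardinality at most $\lambda$ regularly embeds into $\Col(\omega, \lambda)$, so it suffices to produce one pair $(T, \tilde{T})$ that is $\Col(\omega, \lambda)$-absolutely complementing for \emph{every} cardinal $\lambda < \kappa$ simultaneously.

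For each cardinal $\lambda < \kappa$, using choice and the hypothesis, fix a $\Col(\omega, \lambda)$-absolutely complementing pair $(T_\lambda, \tilde{T}_\lambda)$ on $\omega \times \Ord$ with $\p[T_\lambda] = A$. I would then define $T$ on $\omega \times (\kappa \times \Ord)$ by placing $(s, ((\mu_0, \alpha_0), \ldots, (\mu_{n-1}, \alpha_{n-1})))$ into $T$ iff $\mu_0 = \cdots = \mu_{n-1}$ --- call the common value $\mu$ --- and $(s, (\alpha_0, \ldots, \alpha_{n-1})) \in T_\mu$. Define $\tilde{T}$ analogously from the $\tilde{T}_\lambda$. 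Every branch of $T$ has constant first coordinate in the $\kappa$-factor, so $\p[T] = \bigcup_{\mu < \kappa} \p[T_\mu] = A$, and likewise $\p[\tilde{T}] = \omega^\omega \setminus A$.

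To verify that $(T, \tilde{T})$ does the job, fix $\lambda < \kappa$ and a $V$-generic $G \subseteq \Col(\omega, \lambda)$. Because $T$ contains an isomorphic copy of $T_\lambda$ (the nodes tagged with the constant value $\lambda$) and likewise for $\tilde{T}$, the known partitioning of $(\omega^\omega)^{V[G]}$ by $\p[T_\lambda]^{V[G]}$ and $\p[\tilde{T}_\lambda]^{V[G]}$ immediately yields $\p[T]^{V[G]} \cup \p[\tilde{T}]^{V[G]} = (\omega^\omega)^{V[G]}$. The hard part will be disjointness, since we need the different $(T_\mu, \tilde{T}_{\mu'})$ to cohere in $V[G]$ even when $\mu, \mu' \neq \lambda$. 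For this I would apply the technique of Lemma~\ref{lem-projection-inclusion} in its cross form: for each pair $\mu, \mu' < \kappa$ the tree $\{(s, t, u) : (s,t) \in T_\mu \wedge (s,u) \in \tilde{T}_{\mu'}\}$ on $\omega \times \Ord \times \Ord$ is wellfounded in $V$ (a branch would produce a real simultaneously in $A$ and in its complement), so absoluteness of wellfoundedness keeps it wellfounded in $V[G]$. Hence $\p[T_\mu]^{V[G]} \cap \p[\tilde{T}_{\mu'}]^{V[G]} = \emptyset$ for all such $\mu, \mu'$, and summing gives $\p[T]^{V[G]} \cap \p[\tilde{T}]^{V[G]} = \emptyset$, completing the verification.
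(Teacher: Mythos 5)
Your proposal is correct and takes essentially the same approach as the paper: amalgamate the hypothesized complementing pairs into a single tagged pair, observe that the projections still cover in a generic extension because a copy of the relevant witnessing pair is present, and establish disjointness via absoluteness of wellfoundedness of the cross-tree. The only refinement is that you first invoke Remark~\ref{collapserem} to index the amalgamation by cardinals $\lambda < \kappa$ via collapse posets, whereas the paper indexes directly over all $2^{<\kappa}$ posets on cardinals below $\kappa$; this is a mild economy but does not change the underlying argument.
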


\begin{proof}
Let $\eta = 2^{\mathord{<}\kappa}$ and
let
$\langle \mathbb{P}_\alpha : \alpha < \eta\rangle $ enumerate the set of all posets on
cardinals less than $\kappa$.
For each $\alpha < \eta$, choose a $\mathbb{P}_\alpha$-absolutely complementing pair of trees $(T_\alpha, \tilde{T}_\alpha)$ witnessing that $A$ is $\mathbb{P}_\alpha$-Baire.
Consider the  tree $T$ on $\omega \times \Ord$ defined by
\[T = \{ (s, (\alpha^\frown t) \restrict \left|s\right|) : \alpha < \eta \text{ and } (s,t) \in T_\alpha \}.\]
This construction immediately implies that $\p[T] = \bigcup_{\alpha < \eta} \p[T_\alpha]$ in every outer model of $V$.
Similarly, letting 
\[\tilde{T} = \{ (s, (\alpha^\frown t) \restrict \left|s\right|) : \alpha < \eta \text{ and } (s,t) \in \tilde{T}_\alpha \},\]
$\p[\tilde{T}] = \bigcup_{\alpha < \eta} \p[\tilde{T}_\alpha]$ in every outer model of $V$.
In $V$ we have $\p[T] = A$ and $\p[\tilde{T}] = \omega^{\omega} \setminus A$. It follows immediately from the construction of $T$ and $\tilde{T}$ that the union of their projections is $\omega^{\omega}$ in any forcing extension by a poset of cardinality less than $\kappa$. The absoluteness of the wellfoundedness of the tree
\[\{ (s,t,u) : (s,t) \in T \wedge (s,u) \in \tilde{T}\}\] implies that the projections of $T$ and $\tilde{T}$ are still disjoint in any such extension. 
That is, the pair $(T, \tilde{T})$ is 
$\kappa$-absolutely complementing.
%
%
\end{proof}

It follows that, assuming the Axiom of Choice, if $\lambda$ is a limit cardinal and $A$ is $\kappa$-universally Baire for every $\kappa < \lambda$, then $A$ is $\lambda$-universally Baire.

\section{Semiscales}\label{semiscalesec}


In this section, we work in $\mathsf{ZF}$, except where noted otherwise. A \emph{norm} on a set of reals $A$ is an ordinal-valued function on $A$. A \emph{prewellordering} of $A$ is a binary relation on $A$ that is transitive, total, and whose corresponding strict relation is wellfounded. A norm $\varphi$ on a set $A$ induces the prewellording $\{ (x,y) : \varphi(x) \leq \varphi(y)\}$, and a prewellordering $\leq$ induces the norm sending $x \in A$ to its $\leq$-rank. 

\begin{defn}\label{convssdef}
 Let $A$ be a set of reals and let $\vec{\varphi} = \langle \varphi_n : n \in \omega\rangle$ be a sequence of norms on $A$.
\begin{enumerate}
 \item For a sequence $\langle x_i : i < \omega \rangle$ of reals in $A$ and a real $y$,  we say that $\langle x_i : i < \omega\rangle$ \emph{converges to $y$ via $\vec{\varphi}$} if $x$ converges to $y$ and for every $n < \omega$ the sequence $\langle \varphi_n(x_i) : i < \omega\rangle$ is eventually constant.

 \item\label{semiscaleitem} We say that $\vec{\varphi}$ is a \emph{semiscale} on $A$ if
 for every sequence $\langle x_i : i < \omega\rangle$ of reals in $A$ and every real $y$, if $\langle x_i : i < \omega\rangle$ converges to $y$ via $\vec{\varphi}$ then $y \in A$. 
\end{enumerate}
\end{defn}

\begin{rem}\label{weakscalerem} The condition that $\langle x_{i} : i < \omega\rangle$ converges to $y$ via $\vec{\varphi}$ in Part (\ref{semiscaleitem}) of Definition \ref{convssdef} can equivalently be replaced with the stronger condition that $x_{i} \restrict i = y \restrict i$ for all $i < \omega$, and $\varphi_{n}(x_{i}) = \varphi_{n}(x_{j})$ whenever $n \leq i < j$. This can be seen by thinning a counterexample satisfying the weaker condition to produce one satisfying the stronger one. 
\end{rem}

If $A$ is a Suslin set of reals then there is a semiscale on $A$.
More specifically,
given a tree $T$ on $\omega \times \Ord$ such that $p[T] = A$,
we define the corresponding \emph{leftmost branch semiscale} $\vec{\varphi}^T$ on $A$ by letting 
$\varphi^{T}_{n}(x) = \lb(T_{x})(n)$. 
That is, for each $n < \omega$,
$\varphi^T_n(x) = \xi_n$ where
$\langle \xi_0,\ldots,\xi_n \rangle \in \Ord^{n+1}$ is lexicographically least such that $(x,f) \in [T]$ for some $f\in \Ord^\omega$ extending $\langle \xi_0,\ldots,\xi_n\rangle$.
It is straightforward to verify that this construction gives a semiscale on $A$.

Conversely, if there is a semiscale on $A$ and $\mathsf{CC}_\mathbb{R}$ holds, then $A$ is Suslin ($\mathsf{CC}_{\bbR}$, or \emph{countable choice for reals}, is the restriction of the Axiom of Choice to countable sets of nonempty sets of real numbers; it holds in the models considered in this paper and in fact follows from $\AD$ by a theorem of Mycielski).
To see this, given a semiscale $\vec{\varphi}$ on $A$, define the \emph{tree of} $\vec{\varphi}$ by
\[T_{\vec{\varphi}} = \{ (x \restrict i, \langle \varphi_0(x),\ldots,\varphi_{i-1}(x)\rangle) : x \in A,\, i \in \omega\}.\]
Clearly, $A \subseteq \p[T_{\vec{\varphi}}]$.  Conversely, suppose that $y \in \p[T_{\vec{\varphi}}]$, and fix an $f \in \Ord^\omega$ such that
  $(y,f) \in [T_{\vec{\varphi}}]$.
 For each $i < \omega$ choose a real $x_i$ witnessing $(y \restrict i, f \restrict i) \in T_{\vec{\varphi}}$, so that $x_i \in A$,  $y \restrict i = x_i \restrict i$ and 
 $f \restrict i = \langle\varphi_0(x_i),\ldots,\varphi_{i-1}(x_i)\rangle$.
Then the sequence $\langle x_i : i < \omega\rangle$ converges to $y$ via $\vec{\varphi}$, so $y \in A$.
This shows that $A = \p[T_{\vec{\varphi}}]$.

It follows that if $\mathsf{CC}_\mathbb{R}$ holds, then the Suslin sets of reals are exactly the sets of reals that admit semiscales. We don't know if this can be proved in $\mathsf{ZF}$.
The following lemma can be used in some situations to show that $A = \p[T_{\vec{\varphi}}]$ without assuming $\mathsf{CC}_\mathbb{R}$.
The lemma is not useful for showing that $A$ is Suslin, because that is one of the hypotheses.  Rather, it is useful when we want to represent $A$ as the projection of a tree that is definable from a given semiscale $\vec{\varphi}$.

\begin{lem}
 Let $A$ be a set of reals and let $\vec{\varphi}$ be a semiscale on $A$.
 Assume that the set $A$ and the relation 
 \[E = \{(\bar{n}, x, y) : n < \omega \text{ and } x,y \in A \text{ and } \varphi_n(x) = \varphi_n(y)\}\]
 are both Suslin, where $\bar{n}$ denotes the constant function from $\omega$ to $\{n\}$.   Then $\p[T_{\vec{\varphi}}] = A$.
\end{lem}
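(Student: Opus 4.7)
I would first note that $A \subseteq \p[T_{\vec\varphi}]$ is immediate from the definition of $T_{\vec\varphi}$. For the reverse inclusion, my plan is to fix trees $S$ on $\omega \times \kappa$ with $\p[S] = A$ and $R$ on $\omega^3 \times \lambda$ with $\p[R] = E$, then construct an auxiliary tree $U$ on $\omega \times \Lambda$ (for a suitable ordinal $\Lambda$) with $\p[U] = A$, and finally verify $\p[T_{\vec\varphi}] \subseteq \p[U]$ via a forcing-and-absoluteness argument.

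Concretely, I would design $U$ so that a branch $(y, \mathrm{data}) \in [U]$ codes, via a fixed ordinal pairing, an $\omega$-sequence $\langle x^{(j)} : j < \omega\rangle$ of reals satisfying $x^{(j)} \restrict j = y \restrict j$, together with, for each $j$, a branch $s^{(j)}$ through $S$ witnessing $x^{(j)} \in A$, and, for each triple $n < j \le k$, a branch $t^{(n,j,k)}$ through $R$ witnessing $\varphi_n(x^{(j)}) = \varphi_n(x^{(k)})$. The inclusion $\p[U] \subseteq A$ will then follow from the semiscale property: a branch of $U_y$ gives each $x^{(j)} \in A$, forces $x^{(j)} \to y$ pointwise, and makes $\varphi_n(x^{(j)})$ eventually constant in $j$, so $\langle x^{(j)}\rangle$ converges to $y$ via $\vec\varphi$ and $y \in A$. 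The reverse inclusion $A \subseteq \p[U]$ is trivial: for $y \in A$, take $x^{(j)} = y$ for all $j$ and use leftmost-branch witnesses through $S$ and $R$. Hence $\p[U] = A$ in $V$.

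To show $\p[T_{\vec\varphi}] \subseteq \p[U]$, I would fix $y \in \p[T_{\vec\varphi}]$ with witness $f \in \Ord^\omega$, so each set $B_j = \{x \in A : x \restrict j = y \restrict j \text{ and } \varphi_n(x) = f(n) \text{ for all } n < j\}$ is nonempty. Pass to the generic extension $V[G]$ by $\Col(\omega, \bbR^V)$, in which $\bbR^V$ becomes wellorderable. In $V[G]$ I can canonically select some $x^{(j)} \in B_j$ for each $j$, and the corresponding leftmost-branch witnesses $s^{(j)} = \lb(S_{x^{(j)}})$ and $t^{(n,j,k)} = \lb(R_{(\bar n, x^{(j)}, x^{(k)})})$ already live in $V$. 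Assembling this data produces a branch of $U$ at $y$ in $V[G]$, so $U_y$ is ill-founded there; by absoluteness of ill-foundedness for trees on ordinals, $U_y$ is already ill-founded in $V$, giving $y \in \p[U]^V = A$.

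The main obstacle is the absence of $\mathsf{CC}_{\bbR}$ in $V$, which blocks any direct choice of a coherent sequence $\langle x^{(j)}\rangle$ out of the pointwise-nonempty family $\langle B_j\rangle$. Performing the choice inside a collapse extension where $\bbR^V$ is countable, and then exploiting absoluteness of ill-foundedness of $U$ to transfer the conclusion back to $V$, is the device that makes the argument work in $\mathsf{ZF}$.
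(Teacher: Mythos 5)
Your proof is correct and takes a genuinely different route from the paper. The paper defines ``full $N$-witnesses'' and ``partial $N$-witnesses,'' shows that every partial witness extends to one of the next length, and then, since partial witnesses are finite sequences of ordinals, invokes the canonical wellordering of $\Ord^{\less\omega}$ to build a coherent chain of partial witnesses directly in $V$ without any appeal to Choice. Your approach packages exactly the same witness data into an auxiliary tree $U$ (whose nodes are in effect the paper's partial witnesses) and proves $\p[U]=A$ in $V$, then obtains a branch of $U_y$ by first collapsing $\bbR^V$ to be countable, doing the selection in $V[G]$ (where it is trivial), and finally pulling the illfoundedness of $U_y$ back to $V$ by Mostowski absoluteness for trees on ordinals. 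Both proofs hinge on the same combinatorial content --- the Suslin witnesses for $A$ and $E$ turn the existence of a convergent sequence into the illfoundedness of a definable tree on ordinals --- but the paper resolves the Choice issue by the leftmost-branch recursion in $V$, while you resolve it by forcing and absoluteness. Your route is arguably more conceptually transparent and matches the forcing-absoluteness style of several other lemmas in the same section (e.g., Lemma~\ref{expandscaleslemma}), at the cost of invoking forcing where a purely internal argument is available; the paper's route is entirely ground-model but requires the somewhat fussier bookkeeping of full versus partial witnesses. One minor point worth spelling out in a final write-up: the nodes of $U$ must be defined purely from finite prefixes of the $x^{(j)}$'s, $s^{(j)}$'s, and $t^{(n,j,k)}$'s so that $U$ really is a tree (downward closed, with only finite data at each level); this is exactly the step that the paper handles by passing from full to partial witnesses, so the bookkeeping cost is not actually avoided, only relocated into the definition of $U$.
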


\begin{proof}
Clearly, $A \subseteq \p[T_{\vec{\varphi}}]$ as before.
Conversely, let $y \in \p[T_{\vec{\varphi}}]$.
Fix $f \in \Ord^\omega$ such that $(y,f) \in [T_{\vec{\varphi}}]$.
 Take a tree $T_A$ on $\omega \times \Ord$ witnessing that $A$ is Suslin, and take a tree $T_E$ on $\omega \times \omega \times \omega \times \Ord$ witnessing that $E$ is Suslin.

For $N < \omega$, define a \emph{full $N$-witness} to be an object of the form
\[ ( \langle x_i : i < N\rangle, \langle g_i : i < N\rangle, \langle h_{n,i,j} : n \le i \le j < N\rangle)\]
such that
\begin{itemize}
 \item for all $i < N$, $x_i \in \omega^\omega$ witnesses that $(y \restrict i,f \restrict i) \in T_{\vec{\varphi}}$,
meaning that $x_i \in A$, $y \restrict i = x_i \restrict i$ and 
 $f \restrict i = \langle \varphi_0(x_i),\ldots,\varphi_{i-1}(x_i)\rangle$;

 \item for all $i < N$, $g_i \in \Ord^\omega$ witnesses that $x_i \in A$ in the sense that $(x_i,g_i) \in [T_A]$;

 \item for all $n \le i \le j < N$, $h_{n,i,j} \in \Ord^\omega$ witnesses $\varphi_n(x_i) = \varphi_n(x_j)$ in the sense that $(\bar{n}, x_i, x_j, h_{n,i,j}) \in [T_E]$.
\end{itemize}
We define a \emph{partial $N$-witness} to be an object 
that can be obtained by restricting some full $N$-witness as above to $N$ to produce the following:
\[ ( \langle x_i \restrict N : i < N\rangle, \langle g_i \restrict N: i < N\rangle, \langle h_{n,i,j} \restrict N: n \le i \le j < N\rangle).\]
Note that every full $N$-witness can be extended to a full $(N+1)$-witness
by choosing any $x_N \in \omega^\omega$ witnessing $(y \restrict N,f \restrict N) \in T_{\vec{\varphi}}$
and then choosing $g_N$ and $h_{n,i,N}$ appropriately for all $n$ and $i$
such that $n \le i \le N$.

It follows that every partial $N$-witness can be extended to a partial $(N+1)$-witness:
first extend it to a full $N$-witness, extend that to a full $(N+1)$-witness, and then restrict that to a partial $(N+1)$-witness.
By \emph{extending} a partial $N$-witness to a partial $(N+1)$-witness, we mean that its sequences of length $N$ are extended to sequences of length $N+1$, and 
new sequences of length $N+1$
of the form $x_N \restrict (N+1)$, $g_N \restrict (N+1)$, and $h_{n,i,N} \restrict (N+1)$ are added.

The advantage of partial witnesses (as compared with full witnesses) is that they are essentially finite sequences of ordinals, so the Axiom of Choice is not required to choose among them.
We may therefore define a sequence $\langle\sigma_N : N < \omega\rangle$ where for each $N < \omega$, $\sigma_N$ is an $N$-witness and $\sigma_{N+1}$ extends $\sigma_N$.
Taking $\bigcup_{N < \omega} \sigma_N$, by which we really mean taking the union in each coordinate separately, we obtain an object
\[ ( \langle x_i : i < \omega\rangle, \langle g_i : i < \omega\rangle, \langle h_{n,i,j} : n \le i \le j < \omega\rangle). \]
Then for all $i < N$ we have
$x_i \in A$ as witnessed by $g_i \in \Ord^\omega$.
Furthermore, the sequence
$\langle x_i : i < \omega\rangle$ converges to $y$ since
$y \restrict i = x_i \restrict i$ for all $i < \omega$.
We might not have 
$\varphi_n(x_i) = f(n)$ for all $n$ and $i$ such that $n \le i < \omega$ because this property of full witnesses might be lost by passing to partial witnesses and taking unions.
However, we do have $\varphi_n(x_i) = \varphi_n(x_j)$ for all $n$, $i$, and $j$ such that $n \le i \le j < \omega$, because this is witnessed by 
$h_{n,i,j} \in \Ord^\omega$.
Therefore the sequence $\langle x_i : i < \omega\rangle$ converges to $y$ via $\vec{\varphi}$.
Since $\vec{\varphi}$ is a semiscale on $A$ it follows that $y \in A$, as desired.
\end{proof}




Let $A$ be a set of reals and let $\vec{\varphi}$ be a semiscale on $A$.
The \emph{canonical code} for $\vec{\varphi}$
is the set $R$ consisting of those triples $(\bar{n},x,y) \in \omega^\omega \times A \times A$ for which
$\varphi_{n}(x) \le \varphi_{n}(y)$, where again $\bar{n}$ denotes the constant sequence $(n,n,n,\ldots)$.
When this holds we say that $R$ codes $\vec{\varphi}$.

For any ternary relation $R \subseteq \omega^\omega \times \omega^\omega \times \omega^\omega$ and any $n \in \omega$, define
\[ R_n = \{ (x,y) \in \omega^\omega \times \omega^\omega : (\bar{n}, x, y) \in R\}.\]
Note that a set  $R \subseteq \omega^\omega \times \omega^\omega \times \omega^\omega$ codes a semiscale if and only if
each $R_n$ is a prewellordering of $A$, and
for every sequence $\langle x_i : n \in \omega\rangle$ of points in $A$ converging to a point $y \in \omega^\omega$,
if for each $n$ in $\omega$ there is an $m \in \omega$ such that $x_{i}R_{n}x_{j}$ for all $i,j \in \omega \setminus m$, then $y$ is in $A$. 


The following result is immediate from the definitions.

\begin{lem}\label{restrictscaleslemma}
 Let $A \subseteq \omega^\omega$ and let $R \subseteq \omega^\omega \times \omega^\omega \times \omega^\omega$ code a semiscale on $A$.
 Let $\mathcal{M}$ be an inner model of $\mathsf{ZF}$.
 If $R \cap \mathcal{M} \in \mathcal{M}$ then $A \cap \mathcal{M} \in \mathcal{M}$ and
$\mathcal{M}$ satisfies ``$R \cap \mathcal{M}$ codes a semiscale on $A \cap M$.''
\end{lem}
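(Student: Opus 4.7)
The lemma is essentially a routine absoluteness check: once we know $R \cap \mathcal{M} \in \mathcal{M}$, every clause in the definition of ``$R$ codes a semiscale on $A$'' transfers appropriately between $V$ and the transitive inner model $\mathcal{M}$.

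First I would observe that the set $A$ is recoverable from the coding relation $R$ alone via
\[ A = \{ x \in \omega^\omega : (\bar{0}, x, x) \in R\}. \]
This works because $R \subseteq \omega^\omega \times A \times A$ forces the right-hand side to contain only points of $A$, while for every $x \in A$ we have $\varphi_0(x) \le \varphi_0(x)$ and hence $(\bar{0},x,x) \in R$. Applying this absolute definition inside $\mathcal{M}$ to the parameter $R \cap \mathcal{M}$ yields exactly $A \cap \mathcal{M}$, which therefore belongs to $\mathcal{M}$.

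For the second claim I would use the characterization of ``codes a semiscale'' given just before the lemma, in terms of the slices $R_n = \{(x,y) : (\bar{n},x,y) \in R\}$. Two things must be verified in $\mathcal{M}$: that each $(R \cap \mathcal{M})_n = R_n \cap \mathcal{M}$ is a prewellordering of $A \cap \mathcal{M}$, and that the convergence-closure clause for semiscales holds. Transitivity and totality of $R_n \cap \mathcal{M}$ on $A \cap \mathcal{M}$ are inherited from the corresponding properties of $R_n$ on $A$ by an immediate downward check; wellfoundedness of the strict part is absolute between transitive models of $\mathsf{ZF}$ (equivalently, any descending sequence in $\mathcal{M}$ would already descend in $V$), so it also transfers down from $R_n$ to $R_n \cap \mathcal{M}$.

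For the convergence-closure condition, I would argue directly. Suppose, inside $\mathcal{M}$, that $\langle x_i : i < \omega\rangle$ is a sequence of points of $A \cap \mathcal{M}$ converging to some real $y$, and that for each $n$ there is $m$ with $x_i\,(R \cap \mathcal{M})_n\,x_j$ whenever $i,j \ge m$. Every one of these hypotheses lifts unchanged to $V$: the $x_i$ lie in $A$; the limit $y$ is determined pointwise from the sequence and so belongs to $\mathcal{M}$ and is also the limit as computed in $V$; and $(R \cap \mathcal{M})_n \subseteq R_n$. Because $R$ codes a semiscale on $A$ in $V$, we conclude $y \in A$, and since $y \in \mathcal{M}$ this gives $y \in A \cap \mathcal{M}$, as required. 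I do not anticipate any genuine obstacle; the only point that warrants a moment's thought is the absoluteness of wellfoundedness, for which the standard equivalence (in $\mathsf{ZF}$) with the existence of a ranking function suffices.
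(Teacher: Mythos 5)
The paper offers no proof of this lemma---it simply declares it ``immediate from the definitions''---so there is nothing for you to match; your proposal is a correct elaboration of exactly the routine verification the authors are asserting is routine. Your key observation, that $A$ is definable from $R$ alone via $A = \{x : (\bar{0},x,x) \in R\}$, is right and is what makes $A \cap \mathcal{M} \in \mathcal{M}$ immediate, and your clause-by-clause check of the prewellordering conditions and the convergence-closure clause is sound.

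One small imprecision worth tidying: you justify the downward transfer of wellfoundedness both by ``any descending sequence in $\mathcal{M}$ would already descend in $V$'' and by the existence of a ranking function. Neither is quite the cleanest formulation of the downward direction in $\ZF$. The descending-sequence criterion is not the official definition of wellfoundedness without some choice, and the rank-function characterization is the tool for the \emph{upward} direction. The downward direction is literally immediate from the ``every nonempty subset has a minimal element'' definition: if $S \in \mathcal{M}$ is a nonempty subset of the field of $(R\cap\mathcal{M})_n$, then $S$ is a nonempty subset of the field of $R_n$ in $V$ and so has an $R_n$-minimal element there; since $S \subseteq \mathcal{M}$ and $(R\cap\mathcal{M})_n \subseteq R_n$, that same element is $(R\cap\mathcal{M})_n$-minimal as computed in $\mathcal{M}$. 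This is a minor point and not a gap in the argument, but it is the genuinely zero-cost way to see that clause transfers down.
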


The following result is proved by standard arguments, but since we don't know of a reference we include a proof here. Higher-level arguments for similar absoluteness results are given in Section \ref{sec-DM-at-limit-of-lt-lambda-strongs}. 


\begin{lem}\label{expandscaleslemma}
  Let $A \subseteq \omega^\omega$ and let $R \subseteq \omega^\omega \times \omega^\omega \times \omega^\omega$ code a semiscale on $A$.
  Let $\mathbb{P}$ be a poset and let $G\subseteq \mathbb{P}$ be a $V$-generic filter on $\mathbb{P}$.
  If $A$ and $R$ are $\mathbb{P}$-Baire, then $V[G]$ satisfies ``$R^{V[G]}$ codes a semiscale on $A^{V[G]}$.''
\end{lem}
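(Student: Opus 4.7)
The plan is to fix $\mathbb{P}$-absolutely complementing pairs of trees $(T_A,\tilde T_A)$ and $(T_R,\tilde T_R)$ witnessing that $A$ and $R$ are $\mathbb{P}$-Baire, and then verify all the requirements on $R^{V[G]}$ and $A^{V[G]}$ by absoluteness-of-wellfoundedness arguments. For each required property I would construct a tree in $V$ whose infinite branches in any outer model would encode a counterexample to that property; the tree will be wellfounded in $V$ because the property holds in $V$ for $R$ and $A$, hence wellfounded in $V[G]$ by absoluteness, hence no counterexample exists in $V[G]$. Throughout, ``$(\bar n,x,y)\notin R^{V[G]}$'' is witnessed by $\tilde T_R$, which is legitimate because $(T_R,\tilde T_R)$ is a $\mathbb{P}$-absolutely complementing pair.

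For the three prewellordering conditions on each section $R^{V[G]}_n$, the trees are straightforward. Transitivity of $R^{V[G]}_n$ on $A^{V[G]}$ fails only if there is a triple $(x,y,z)$ of reals together with $T_A$-witnesses that $x,y,z\in A^{V[G]}$, $T_R$-witnesses that $(\bar n,x,y)$ and $(\bar n,y,z)$ lie in $R^{V[G]}$, and a $\tilde T_R$-witness that $(\bar n,x,z)\notin R^{V[G]}$; the tree of finite approximations of such objects is wellfounded in $V$ since $R_n$ is transitive on $A$. Totality is analogous, with $\tilde T_R$-witnesses of the failure of both $(\bar n,x,y)$ and $(\bar n,y,x)$. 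Wellfoundedness of the strict part of $R^{V[G]}_n$ on $A^{V[G]}$ is handled by a tree whose branches encode a sequence $\langle x_i : i<\omega\rangle$ in $A^{V[G]}$ (with $T_A$-witnesses) together with, for each $i$, a $T_R$-witness that $(\bar n,x_{i+1},x_i)\in R^{V[G]}$ and a $\tilde T_R$-witness that $(\bar n,x_i,x_{i+1})\notin R^{V[G]}$; wellfoundedness of this tree in $V$ is exactly wellfoundedness of the strict part of $R_n$ on $A$ in $V$.

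For the semiscale convergence property itself, I would invoke Remark \ref{weakscalerem} inside $V[G]$: it suffices to rule out a sequence $\langle x_i:i<\omega\rangle\subseteq A^{V[G]}$ and a real $y\notin A^{V[G]}$ satisfying $x_i\restrict i = y\restrict i$ for every $i$ and $\varphi_n^{V[G]}(x_i)=\varphi_n^{V[G]}(x_j)$ whenever $n\le i\le j$, where (using that $R^{V[G]}_n$ is already known to be a prewellordering) the equality of norms is expressed as $(\bar n,x_i,x_j)\in R^{V[G]}$ and $(\bar n,x_j,x_i)\in R^{V[G]}$. Build a tree $S$ on $\omega\times\Ord$ whose level-$N$ nodes record approximations of length $N$ to: a real $y$; a $\tilde T_A$-witness for $y\notin A^{V[G]}$; for each $i<N$, a real $x_i$ with $x_i\restrict i = y\restrict i$ together with a $T_A$-witness for $x_i\in A^{V[G]}$; and for each $n\le i\le j<N$, $T_R$-witnesses for both $(\bar n,x_i,x_j)$ and $(\bar n,x_j,x_i)$. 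An infinite branch through $S$ in $V$ would exhibit a counterexample to the semiscale property of $\vec\varphi$ on $A$ in $V$, which is impossible; therefore $S$ is wellfounded in $V$, hence in $V[G]$, which is exactly what is needed.

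The main obstacle I expect is purely combinatorial: arranging the semiscale tree $S$ so that its finite approximations line up coherently along branches, since the ordinal witnesses come from four distinct trees ($T_A$, $\tilde T_A$, $T_R$, $\tilde T_R$) and one must interleave all of them while maintaining the compatibility constraints $x_i\restrict i = y\restrict i$. This is essentially the same bookkeeping issue that appears in the preceding lemma (the ``partial witness'' construction replacing countable choice), and once it is in place the invocation of absoluteness of wellfoundedness is routine.
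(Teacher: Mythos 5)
Your overall strategy — fix absolutely complementing pairs $(T_A,\tilde T_A)$, $(T_R,\tilde T_R)$, build a tree in $V$ whose branches would encode a counterexample, and push wellfoundedness of that tree into $V[G]$ — is exactly the paper's. Your treatment of transitivity, totality, and the semiscale condition (using Remark~\ref{weakscalerem} instead of the paper's auxiliary parameter $c$) all match the paper, which even notes explicitly that it could have used Remark~\ref{weakscalerem} in place of $c$. You do omit the verification that $R^{V[G]}_n \subseteq A^{V[G]} \times A^{V[G]}$, but that is a small gap filled by the same kind of tree.

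The genuine gap is in the wellfoundedness of the strict part of $R^{V[G]}_n$. Your tree argument shows that in $V[G]$ there is no infinite strictly $R^{V[G]}_n$-descending sequence of reals. But the lemma is proved in $\mathsf{ZF}$, and in the absence of choice in $V[G]$, ``no infinite descending $\omega$-sequence'' does not imply ``every nonempty subset has a minimal element,'' which is what a prewellordering requires. Your claim that ``wellfoundedness of this tree in $V$ is exactly wellfoundedness of the strict part of $R_n$ on $A$ in $V$'' already conflates the two notions; the implication you need going from $V[G]$'s tree-wellfoundedness back to relation-wellfoundedness simply fails without $\mathsf{DC}_{\mathbb{R}}$. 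The paper supplies the missing step: given a nonempty $S \subseteq A^{V[G]}$ in $V[G]$ with no $R^{V[G]}_n$-minimal element, force over $V[G]$ with the poset $\mathbb{Q}$ of finite strictly $R^{V[G]}_n$-decreasing sequences from $S$ ordered by reverse inclusion; genericity produces an infinite descending sequence in $V[G][H]$, and since the tree built from $T_R$, $\tilde T_R$ is still wellfounded there by absoluteness, this is a contradiction. Without that extra forcing argument your proof does not establish that $R^{V[G]}_n$ is a prewellordering.
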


\begin{proof}
 Let $(T_A, T_{\neg A})$ be a pair of trees on $\omega \times \Ord$ witnessing that $A$ is $\mathbb{P}$-Baire, and let
 $(T_R, T_{\neg R})$ be a pair of trees on $\omega \times \omega \times \omega \times \Ord$ witnessing that $R$ is $\mathbb{P}$-Baire.
 The desired conclusion will follow from the fact that the wellfoundedness of various trees defined from these four trees is absolute from $V$ to $V[G]$.

 First, we show that $R^{V[G]}_n$ is a prewellordering of $A^{V[G]}$ for each $n \in \omega$.
 Let $n \in \omega$.
 We have $R_n \subseteq A \times A$,
so the trees
 \[\{ (s, t, u, v)  : (\bar{n}, s, t, u)   \in T_{R} \text{ and }  (s, v) \in T_{\neg A} \}\] and \[\{ (s, t, u, v)  : (\bar{n}, s, t, u)   \in T_{R} \text{ and } (t, v) \in T_{\neg A} \}\]
 on $\omega \times \omega \times \Ord \times \Ord$
 are wellfounded, and therefore 
 $R^{V[G]}_n \subseteq A^{V[G]} \times A^{V[G]}$ (here $\bar{n}$ refers to the constant sequence of length $|s|$ with value $n$).

Since $R_n$ is transitive, the tree on $\omega \times \omega \times \omega \times \Ord \times \Ord \times \Ord$
consisting of all tuples $(s, t, u, v, w, x)$ such that 
\begin{itemize}
 \item $(\bar{n}, s, t, v) \in T_{R}$,
 \item $(\bar{n}, t, u, w) \in T_{R}$, and
 \item $(\bar{n}, s, u, x) \in T_{\neg R}$
\end{itemize}
is wellfounded. It follows that $R^{V[G]}_n$ is transitive.

Since $R_n$ is total on $A$, the tree
on $\omega \times \omega \times \Ord \times \Ord \times \Ord \times \Ord$
consisting of all tuples
$(s, t, u, v, w, x)$
such that 
\begin{itemize}
 \item $(s, u)   \in T_A$
 \item $(t, v)   \in T_A$
 \item $(\bar{n}, s, t, w) \in T_{\neg R}$, and
 \item $(\bar{n}, t, s, x) \in T_{\neg R}$
\end{itemize}
is wellfounded. It follows that $R^{V[G]}_n$ is total on $A^{V[G]}$.

For the remaining parts, which are the wellfoundedness of each $R^{V[G]}_n$ and the 
semiscale condition, it will help to introduce the following notation.
Fix a recursive bijection $\omega \times \omega \to \omega$ denoted by $(i, j) \mapsto \langle i, j \rangle$.
Given
$s \in \omega^{\less\omega}$ and integers $i,j$, define the finite sequence
$s_{i,j}$ to be $\langle s(\langle i, 0\rangle), \ldots, s(\langle i, j-1\rangle)\rangle$
if all necessary values of $s$ are defined, and undefined otherwise.

The strict part of $R_n$ is wellfounded, so $R_n$ has no infinite strictly decreasing sequences.
 Then the tree on $\omega \times \Ord \times \Ord$ consisting of all tuples $(s, t, u)$
 such that for all $i,j \in \omega$ such that $s_{i,j}$ (and hence also $t_{i,j}$ and $u_{i,j}$) and $s_{i+1,j}$ are defined,
\begin{itemize}
 \item $(\bar{n}, s_{i+1, j}, s_{i,j}, t_{i,j}) \in T_R$ and
 \item $(\bar{n}, s_{i,j}, s_{i+1,j}, u_{i,j}) \in T_{\neg R}$
\end{itemize}
 is wellfounded, so there is no strictly $R^{V[K]}_n$-decreasing infinite sequence in any generic extension $V[K]$ (we apply this fact with $K = (G,H)$ in the next paragraph).

Because we are not assuming the Axiom of Choice, the previous paragraph does not immediately show that
 $R^{V[G]}_n$ is wellfounded.  
 To show that it is, let $S$ be a nonempty subset of $A^{V[G]}$ in $V[G]$.
 Let $\mathbb{Q}$ be the poset of strictly $R^{V[G]}_n$-decreasing finite sequences
 of reals in $S$, ordered by reverse inclusion,
 and take a $V[G]$ generic filter $H \subseteq \mathbb{Q}$.
 Because the tree on $\omega \times \Ord \times \Ord$
 defined above is still wellfounded in $V[G][H]$,
 the sequence
 $\bigcup H$ cannot be infinite, so it has a last element, which must be
 an $R^{V[G]}_n$-minimal element of $S$.

Lastly, we prove the semiscale condition.
Because $R$ codes a semiscale
on $A$, the tree on $\omega \times \omega \times \Ord \times \Ord \times \Ord$ consisting of all tuples
$(c, s, t, u, v)$ such that
for all $n,i,j \in \omega$ such that $c(n) \le \min(i,j)$ and
$s_{i,j}$ (and hence also $t_{i,j}$ and $u_{i,j}$) and $s_{i+i,j}$
are defined, 
\begin{itemize}
 \item $(\bar{n}, s_{i,j}, s_{i+1,j}, t_{i,j}) \in T_R$,
 \item $(\bar{n}, s_{i+1,j}, s_{i,j}, u_{i,j}) \in T_R$,
 \item $s_{i,c(n)} = s_{i+1,c(n)}$, and
 \item $(s_{i,n}, v \restrict n) \in T_{\neg A}$
\end{itemize}
is wellfounded (here $c(n)$ indicates a point such that the reals $\bigcup_{j}s_{i,j}$ for $i > c(n)$ agree both in the $n$-th norm and also for their first $n$ digits; we could have done without $c$ by using Remark \ref{weakscalerem}).  This, along with the fact that $R^{V[G]}_n$ is a prewellordering of $A^{V[G]}$ for each $n \in \omega$, implies that $R^{V[G]}$ is a semiscale on $A^{V[G]}$ in $V[G]$.
\end{proof}

The following definition is used several times in the next section.

\begin{defn}
 A set $\Gamma \subseteq \powerset(\omega^\omega)$
has the \emph{semiscale property} if every member of $\Gamma$ has a semiscale which is coded by a continuous preimage of a member of $\Gamma$. 

\end{defn}

See Kechris and Moschovakis \cite{KecMosScales} or Moschovakis \cite{MosDST2009} for more on semiscales.

\section{Canonical models for universal Baireness}\label{canmodelsec}

In this section, we continue to work in $\mathsf{ZF}$.
We show that if $\Gamma \subseteq \uB$ and $\Gamma$ is selfdual and has the semiscale property, then there is an inclusion-minimal inner model containing $\mathbb{R}$ and $\Gamma$ and satisfying the statement that every member of $\Gamma$ is universally Baire. 
In the context of our main theorem this minimal model has no sets of reals other than those in $\Gamma$, and therefore satisfies the statement that every set of reals is universally Baire.

\begin{defn}
We let $F$ be the class of all quadruples $(A,\mathbb{P},p,\dot{x})$ such that
\begin{itemize}
 \item $A \in \uB$,
  $\mathbb{P}$ is a poset,
  $p \in \mathbb{P}$,
  $\dot{x}$ is a $\mathbb{P}$-name for a real, and
 \item $p$ forces $\dot{x}$ to be in the canonical expansion of $A$.
\end{itemize}
\end{defn}

For sets $\Gamma \subseteq \uB$, we define a local version of $F$ that tells us how to compute the canonical extensions of sets in $\Gamma$ only:
\[F \restrict \Gamma = \{ (A,\mathbb{P},p,\dot{x}) \in F : A \in \Gamma\}.\]


We will eventually (in Lemma \ref{Fabslem}) show that the desired minimal model is $L^{F \restrict \Gamma}(\Gamma, \mathbb{R})$,
by which we mean the model constructed from the transitive set \[V_\omega \cup \mathbb{R} \cup \Gamma \cup \{\Gamma\}\] relative to the predicate $F \restrict \Gamma$.
First we show that the sets in $\Gamma$ are universally Baire in this model when $\Gamma$ is selfdual and has the semiscale property. 
In the proof it will be important that the model knows how to expand not only the sets in $\Gamma$, but also the relations coding semiscales on these sets and their complements. 


%
%

\begin{thm}\label{FreflectsuBthrm}
 If $\Gamma \subseteq \uB$ is selfdual and has the semiscale property, then
 $L^{F \restrict \Gamma}(\Gamma, \mathbb{R}) \models \Gamma \subseteq \uB$.
\end{thm}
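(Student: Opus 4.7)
Write $M := L^{F \restrict \Gamma}(\Gamma, \bbR)$. Fix $A \in \Gamma$; it suffices to exhibit, for every poset $\mathbb{P} \in M$, a $\mathbb{P}$-absolutely complementing pair of trees in $M$ whose first projection is $A$. By selfduality, $A' := \bbR \setminus A \in \Gamma$. By the semiscale property, pick semiscales $\vec{\varphi}$ on $A$ and $\vec{\psi}$ on $A'$ whose canonical codes $R, R'$ are continuous preimages $R = f^{-1}[B]$ and $R' = (f')^{-1}[B']$ of members $B, B' \in \Gamma$. Since $B, B' \in \uB$, both $R$ and $R'$ are universally Baire in $V$, and the real codes of $f, f'$ together with $R, R', B, B'$ all lie in $M$.

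Set $T := T_{\vec{\varphi}}$ and $\tilde T := T_{\vec{\psi}}$. Because $\varphi_n$ and $\psi_n$ are prewellordering ranks computed absolutely from data in $M$, these trees take the same values in $V$ and in $M$, so they lie in $M$. The norm-equality relations appearing in the Suslin-hypothesis lemma of Section \ref{semiscalesec} are definable from $R$ and $R'$ and are Suslin in $M$ via the $f, f'$-pullbacks of the semiscale trees for $B, B'$. That lemma then yields $\p[T] = A$ and $\p[\tilde T] = A'$ inside $M$.

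The main step is to check that $M \models (T, \tilde T)$ is $\mathbb{P}$-absolutely complementing for each $\mathbb{P} \in M$. Disjointness is clear: the tree $\{(s,t,u) : (s,t)\in T, (s,u)\in \tilde T\}$ is wellfounded in $V$ since $A\cap A' = \emptyset$, and wellfoundedness is absolute to every $M$-generic extension. For the covering direction, let $G \subseteq \mathbb{P}$ be $M$-generic. Using the predicate $F \restrict \Gamma$, compute in $M[G]$ the canonical expansions $C^{M[G]}$ for each $C \in \{A, A', B, B'\}$ by applying the $F$-forcing relation to $\mathbb{P}$-names in $M$, and then set $R^{M[G]} := f^{-1}[B^{M[G]}]$ and $(R')^{M[G]} := (f')^{-1}[(B')^{M[G]}]$. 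The proof of Lemma \ref{expandscaleslemma} is a list of wellfoundedness assertions on explicit trees built from complementing pairs for $A$ and $R$; repeating those assertions inside $M$ with $(T, \tilde T)$ and the analogous pairs for $(B, B')$ in the roles of the complementing pairs yields that $R^{M[G]}$ codes a semiscale on $A^{M[G]}$ in $M[G]$. Reapplying the Suslin-hypothesis lemma inside $M[G]$ then gives $\p[T]^{M[G]} = A^{M[G]}$ and $\p[\tilde T]^{M[G]} = (A')^{M[G]}$; by selfduality and the defining property of $F$, these sets partition $\bbR \cap M[G]$, so $(T, \tilde T)$ covers.

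The main obstacle is the apparent circularity in the previous paragraph: running Lemma \ref{expandscaleslemma} nominally requires $\mathbb{P}$-Baire pairs for $B$ and $R$ in $M$ as input, which is an instance of what is being proved. The resolution is to run the construction uniformly over all members of $\Gamma$ at once -- the semiscale property supplies the pairs needed for $B, B'$ as a byproduct of the same construction -- and to observe that once the auxiliary trees from Lemma \ref{expandscaleslemma}'s proof are built (in $M$, by the same absoluteness of norm computation used for $T$ and $\tilde T$), their wellfoundedness in $V$ transfers to $M$ and to $M[G]$ absolutely, so no independent uB-input beyond the data already placed in $M$ is required.
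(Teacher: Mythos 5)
The proposal fails at the covering step, and the Suslin-hypothesis lemma from Section \ref{semiscalesec} does not give what you claim. You take $T = T_{\vec{\varphi}}$ and $\tilde T = T_{\vec{\psi}}$, the trees of the \emph{ground-model} semiscales, and assert that ``reapplying the Suslin-hypothesis lemma inside $M[G]$ gives $\p[T]^{M[G]} = A^{M[G]}$.'' But that lemma, applied in $M[G]$ to the expanded set $A^{M[G]}$ and its expanded semiscale code $R^{M[G]}$, yields $\p[T_{\vec{\varphi}^{M[G]}}]^{M[G]} = A^{M[G]}$ where $T_{\vec{\varphi}^{M[G]}}$ is the tree of the \emph{expanded} semiscale computed inside $M[G]$; this tree has strictly more nodes than $T$ (one for each new real that lands in $A^{M[G]}$). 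The easy inclusion for the ground-model tree is $\p[T]^{M[G]} \subseteq A^{M[G]}$ (a new branch requires a sequence $x_i \in A \cap V$ converging to $y$ via $\vec{\varphi}$, and the semiscale condition for the expanded code then puts $y$ in $A^{M[G]}$), but nothing forces equality: a new real $y \in A^{M[G]}$ need not be the limit of any sequence from $A \cap V$ with stabilizing norm values, so it may fail to be in $\p[T]^{M[G]}$. Disjointness of $\p[T]$ and $\p[\tilde T]$ is indeed absolute, but without equality the pair $(T,\tilde T)$ need not cover $\mathbb{R}^{M[G]}$, so it is not shown to be $\mathbb{P}$-absolutely complementing. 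Your final paragraph addresses the circularity of invoking Lemma \ref{expandscaleslemma} but not this more basic problem, which persists even if the circularity is removed.

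The paper's proof uses genuinely different trees. Working with a $V$-generic $H \subseteq \Col(\omega,Z)$, it first passes to the expanded codes $R_+^{V[H]} \cap \mathcal{M}[H]$ and $R_-^{V[H]} \cap \mathcal{M}[H]$ (these land in $\mathcal{M}[H]$ because the $F$-predicate records the canonical-expansion forcing relation), uses Lemma \ref{restrictscaleslemma} to see these code semiscales on the restricted expansions in $\mathcal{M}[H]$, and then takes $S_+, S_-$ to be the trees of \emph{those} semiscales built inside $\mathcal{M}[H]$. These trees project onto the restricted expansions essentially by construction. The delicate point, which your proposal does not need but the paper must address, is that $S_+$ and $S_-$ are a priori objects of $\mathcal{M}[H]$ rather than $\mathcal{M}$; the paper closes this via a symmetry argument using the weak homogeneity of $\Col(\omega,Z)$, showing that the trees depend only on the model $\mathcal{M}[H]$ (not on $H$) and hence exist in $\mathcal{M}$. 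Replacing $S_+, S_-$ by the ground-model semiscale trees $T, \tilde T$ trades this symmetry argument for the unestablished (and in general false) claim that $T$ and $\tilde T$ already project correctly in extensions.
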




\begin{proof}
	Let $\mathcal{M}$ denote $L^{F \restrict \Gamma}(\mathbb{R}, \Gamma)$ and fix $A \in \Gamma$. 
	Let $\vec{\varphi}_+$ and $\vec{\varphi}_-$ be semiscales on $A$ and $\omega^\omega \setminus A$ respectively
	such that the corresponding codes
	\begin{align*}
		R_+ &= \{ (\bar{n}, x, y) \in \omega^{\omega} \times A \times A : \varphi_{+,n}(x) \le \varphi_{+,n}(y) \}\\
		\text{and}\\
		R_- &= \{ (\bar{n}, x, y) \in \omega^{\omega} \times (\omega^\omega \setminus A) \times (\omega^\omega \setminus A) : \varphi_{-,n}(x) \le \varphi_{-,n}(y) \}
	\end{align*}
	are in $\Gamma$. 
	
	Consider an arbitrary set $Z \in \mathcal{M}$.  By Remark \ref{collapserem}, it suffices to find a $\Col(\omega,Z)$-absolutely complementing pair of trees for $A$ in $\mathcal{M}$.
	To do this, we will fix a $V$-generic filter $H \subseteq \Col(\omega,Z)$, find a complementing pair of trees for $A$ in $\mathcal{M}[H]$, and then show that these trees exist in $\mathcal{M}$ and are as desired. 
	
	Fixing $H$, we have by Lemma \ref{expandscaleslemma} that
	the expanded relations $R_+^{V[H]}$ and $R_-^{V[H]}$ code semiscales on the expanded sets $A^{V[H]}$ and $(\omega^\omega \setminus A)^{V[H]}$ respectively.
	Letting $\rho_+$ and $\rho_-$ be the standard $\Col(\omega,Z)$-names for the canonical expansions of the relations $R_+$ and $R_-$ respectively, we see that the restrictions $\rho_+ \cap \mathcal{M}$ and $\rho_- \cap \mathcal{M}$ are in $\mathcal{M}$, as they are coded into the predicate $F \restrict \Gamma$. Therefore the restricted relations $R_+^{V[H]} \cap {\mathcal{M}[H]}$ and $R_-^{V[H]} \cap {\mathcal{M}[H]}$ are in the model $\mathcal{M}[H]$. These restricted relations code semiscales $\vec{\psi}_+,\vec{\psi}_- \in \mathcal{M}[H]$ on the sets $A^{V[H]} \cap {\mathcal{M}[H]}$ and $(\omega^\omega \setminus A)^{V[H]} \cap {\mathcal{M}[H]}$ respectively, 
	by Lemma \ref{restrictscaleslemma}. 
	In $\mathcal{M}[H]$, let $S_+$ and $S_-$ be respectively the trees of the semiscales $\vec{\psi}_{+}$ and $\vec{\psi}_{-}$. It suffices to see that these trees are in $\mathcal{M}$, and that they project to complements in any forcing extension of $\mathcal{M}$ by $\Col(\omega, Z)$. 
	
	These two claims are proved by the same symmetry argument, showing that $S_{+}$ and $S_{-}$ do not depend on the generic filter $H$. More precisely, suppose toward a contradition that there exist conditions $p, p' \in \Col(\omega, Z)$, with $p \in H$, which force contradictory statements about the membership of some sequence $\sigma$ (from $\mathcal{M}$) in either of the trees $S_{+}$ and $S_{-}$, noting that these trees are definable in the $\Col(\omega, Z)$-extension of $\mathcal{M}$ from the realizations of $\rho_{+} \cap \mathcal{M}$ and $\rho_{-} \cap \mathcal{M}$. By the symmetry of $\Col(\omega, Z)$, there exists a $V$-generic filter $H' \subseteq \Col(\omega, Z)$, with $p' \in H'$, such that $\bigcup H$ and $\bigcup H'$ differ by only finitely many coordinates, so that $V[H] = V[H']$ and $\mathcal{M}[H] = \mathcal{M}[H']$. 
	We have then (by Remark \ref{canexprem}) that $A^{V[H]} = A^{V[H']}$, $(\omega^{\omega} \setminus A)^{V[H]} = (\omega^{\omega} \setminus A)^{V[H']}$, $R_{+}^{V[H]} = R_{+}^{V[H']}$ and $R_{-}^{V[H]} = R_{-}^{V[H']}$, from which it follows that the corresponding versions of $S_{+}$ and $S_{-}$ in $\mathcal{M}[H]$ and $\mathcal{M}[H']$ are the same, giving a contradiction. The same argument shows that no condition $p'$ can force that $S_{+}$ and $S_{-}$ fail to project to complements in a $\Col(\omega, Z)$-extension of $\mathcal{M}$, since they do project to complements in $\mathcal{M}[H]$. 
	\end{proof}
	
	

Next, we show that the definition of $F \restrict \Gamma$ is absolute to 
any inner model in which all sets in $\Gamma$ are universally Baire. It follows that any such inner model must contain $L^{F \restrict \Gamma}(\Gamma, \mathbb{R})$. 

\begin{lem}\label{Fabslem}
 Let $\Gamma \subseteq \uB$.
 Let $\mathcal{M}$ be a model of $\mathsf{ZF}$ such that \[\mathbb{R} \cup \Gamma \cup \{\Gamma\} \cup \Ord \subseteq \mathcal{M}\] and $\mathcal{M} \models \Gamma \subseteq \uB$.
 Then $(F \restrict \Gamma)^\mathcal{M} = (F \restrict \Gamma) \cap \mathcal{M}$ and $L^{F \restrict \Gamma}(\Gamma, \mathbb{R}) \subseteq \mathcal{M}$.
\end{lem}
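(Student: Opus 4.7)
The plan is to prove the two conclusions together: once we establish that the predicate $F \restrict \Gamma$ is computed the same way inside $\mathcal{M}$ as in $V$ (on the common domain), the inner-model inclusion follows immediately from the fact that $L^{F \restrict \Gamma}(\Gamma, \bbR)$ is defined by transfinite recursion from parameters all lying in $\mathcal{M}$. So the real work is to prove $(F \restrict \Gamma)^{\mathcal{M}} = (F \restrict \Gamma) \cap \mathcal{M}$, which amounts to showing that for any $A \in \Gamma$ the forcing relation ``$p \forces \dot{x}$ lies in the canonical expansion of $A$'' does not depend on whether it is computed in $V$ or in $\mathcal{M}$.

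Fix $(A, \mathbb{P}, p, \dot{x}) \in \mathcal{M}$ with $A \in \Gamma$. Choose, in $V$, a $\mathbb{P}$-absolutely complementing pair $(T, \tilde T)$ on $\omega \times \Ord$ witnessing $A \in \uB$; and choose, in $\mathcal{M}$, a $\mathbb{P}$-absolutely complementing pair $(T', \tilde T')$ witnessing $\mathcal{M} \models A \in \uB$. Since $\Ord \subseteq \mathcal{M}$, the trees $T', \tilde T'$ lie in $V$ as well. Applying Lemma~\ref{lem-projection-inclusion} in $V$ to the equalities $\p[T'] = A = \p[T]$ and $\p[\tilde T'] = \omega^\omega \setminus A = \p[\tilde T]$, we obtain
\[
 1_{\mathbb{P}} \forces_{V} \p[T'] \subseteq \p[T] \quad\text{and}\quad 1_{\mathbb{P}} \forces_{V} \p[\tilde T'] \subseteq \p[\tilde T].
\]

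Now let $G \subseteq \mathbb{P}$ be $V$-generic; then $G$ is also $\mathcal{M}$-generic, so $\dot{x}^{G} \in \mathcal{M}[G]$. Because $(T', \tilde T')$ is $\mathbb{P}$-absolutely complementing in $\mathcal{M}$, the real $\dot{x}^{G}$ lies in exactly one of $\p[T']^{\mathcal{M}[G]}$, $\p[\tilde T']^{\mathcal{M}[G]}$. By absoluteness of wellfoundedness of the trees $T'_{\dot{x}^G}$ and $\tilde T'_{\dot{x}^G}$, this is the same as which of $\p[T']^{V[G]}$, $\p[\tilde T']^{V[G]}$ contains $\dot{x}^{G}$. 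Combining with the two inclusions displayed above and the disjointness of $\p[T]^{V[G]}$ and $\p[\tilde T]^{V[G]}$, we conclude that $\dot{x}^{G} \in \p[T]^{V[G]}$ if and only if $\dot{x}^{G} \in \p[T']^{\mathcal{M}[G]}$. Since this equivalence holds for every $V$-generic $G$ with $p \in G$, the forcing statement ``$p \forces \dot{x}$ is in the canonical expansion of $A$'' holds in $V$ exactly when it holds in $\mathcal{M}$. Thus $(F \restrict \Gamma) \cap \mathcal{M} = (F \restrict \Gamma)^{\mathcal{M}}$.

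For the second conclusion, note that $V_\omega \cup \bbR \cup \Gamma \cup \{\Gamma\} \subseteq \mathcal{M}$ and $\Ord \subseteq \mathcal{M}$, so the transfinite recursion defining $L^{F \restrict \Gamma}(\Gamma, \bbR)$ can be carried out inside $\mathcal{M}$ using the predicate $(F \restrict \Gamma) \cap \mathcal{M}$. By the absoluteness just established, this predicate equals $(F \restrict \Gamma)^{\mathcal{M}}$, which is a class of $\mathcal{M}$. Hence the recursion, level by level, produces the same sets in $V$ and in $\mathcal{M}$, giving $L^{F \restrict \Gamma}(\Gamma, \bbR) \subseteq \mathcal{M}$. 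The main obstacle is the first step, but it is handled cleanly by Lemma~\ref{lem-projection-inclusion} together with the observation that $\dot{x}^G$ already belongs to $\mathcal{M}[G]$, so the $\mathcal{M}$-side complementation actually decides the membership as seen from $V[G]$.
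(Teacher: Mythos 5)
Your proof is correct and follows essentially the same line as the paper's: pick $\mathbb{P}$-absolutely complementing pairs witnessing $A \in \uB$ in both $\mathcal{M}$ and $V$, apply Lemma~\ref{lem-projection-inclusion} to relate their projections in generic extensions, and then transfer the forcing statement by looking at $V$-generic filters (which are also $\mathcal{M}$-generic); the second conclusion then follows by running the $L^{F\restrict\Gamma}$-recursion inside $\mathcal{M}$. The only cosmetic difference is that the paper spells out both implications separately (taking a $q \le p$ forcing the negation in $\mathcal{M}$ for the ``not (1)'' case), while you compress this into a single equivalence over $V$-generic $G$; the small step you leave implicit — that failure of the forcing statement in $\mathcal{M}$ is witnessed by some $q \le p$, through which a $V$-generic filter can be taken — is routine.
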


\begin{proof}
To show that $(F \restrict \Gamma)^\mathcal{M} = (F \restrict \Gamma) \cap \mathcal{M}$, fix $A \in \Gamma$ and let $\mathbb{P}$ be a poset in $\mathcal{M}$.
Let $p \in \mathbb{P}$ and let $\dot{x} \in \mathcal{M}$ be a $\mathbb{P}$-name for a real.
We claim that
\[ 
\mathcal{M} \models (A, \mathbb{P}, p, \dot{x}) \in F \restrict \Gamma
\iff 
(A, \mathbb{P}, p, \dot{x}) \in F \restrict \Gamma.\]
Take a pair of trees $(T, \tilde{T}) \in \mathcal{M}$ witnessing that $A$ is $\mathbb{P}$-Baire in $\mathcal{M}$ and take a pair of trees $(U,\tilde{U})$ witnessing that $A$ is $\mathbb{P}$-Baire in $V$.
Because $\p[T] = \p[U] = A$ and $\p[\tilde{T}] = \p[\tilde{U}] = \omega^\omega \setminus A$ in $V$ and the pair $(U,\tilde{U})$ is $\mathbb{P}$-absolutely complementing, Lemma \ref{lem-projection-inclusion} gives
\[ 1_{\mathbb{P}} \forces (\p[T] \subseteq \p[U] \text{ and } \p[\tilde{T}] \subseteq \p[\tilde{U}]).\]
We want to show that the following statements are equivalent, for a condition $p \in \mathbb{P}$ and a $\mathbb{P}$-name $\dot{x}$ in $\mathcal{M}$:
\begin{enumerate}
 \item $\mathcal{M} \models p \forces \dot{x} \in \p[T]$; 
 \item $p \forces \dot{x} \in \p[U]$.
\end{enumerate}
To see this, assume first that (1) is true and take a $V$-generic filter $G \subseteq \mathbb{P}$ containing $p$.  Because $G$ is also $\mathcal{M}$-generic, we have
\[\dot{x}_G \in \p[T]^{M[G]} \subseteq \p[T]^{V[G]} \subseteq \p[U]^{V[G]}.\]
This shows that (2) is true.

Now assume that (1) is false and take a condition $q \le p$ such that
\[\mathcal{M} \models q \forces \dot{x} \notin \p[T].\]
Take a filter $G \subseteq \mathbb{P}$ containing $q$ that is $V$-generic, and hence also $M$-generic.
Then we have
\begin{align*}
 \dot{x}_G &\in (\omega^\omega \setminus \p[T])^{\mathcal{M}[G]}\\
 &= \p[\tilde{T}]^{\mathcal{M}[G]}\\
 &\subseteq \p[\tilde{T}]^{V[G]}\\
 &\subseteq \p[\tilde{U}]^{V[G]}\\
 &= (\omega^\omega \setminus \p[U])^{V[G]}.
\end{align*}
This shows that (2) is false.

To see that $L^{F \restrict \Gamma}(\Gamma, \mathbb{R}) \subseteq \mathcal{M}$, consider the model
$(L^{F \restrict \Gamma}(\Gamma, \mathbb{R}))^\mathcal{M} = L^{(F \restrict \Gamma)^\mathcal{M}}(\Gamma, \mathbb{R})$.  It is contained in $\mathcal{M}$, and its construction agrees with that of the model $L^{F \restrict \Gamma}(\Gamma, \mathbb{R})$
at every stage because $(F \restrict \Gamma)^\mathcal{M} = (F \restrict \Gamma) \cap \mathcal{M}$.
\end{proof}




It follows that $L^{F \restrict \Gamma}(\Gamma, \mathbb{R})$ sees the definition of $F$ and therefore sees its own construction.


\begin{cor}\label{Mabscor}
 Let $\Gamma \subseteq \uB$.  If $\Gamma$ is selfdual and has the semiscale property, then
 the model $L^{F \restrict \Gamma}(\Gamma, \mathbb{R})$ satisfies the statement $V = L^{F \restrict \Gamma}(\Gamma, \mathbb{R})$.
\end{cor}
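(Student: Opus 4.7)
The plan is to apply the preceding two results in tandem. Let $\mathcal{M} = L^{F\restrict \Gamma}(\Gamma, \mathbb{R})$. Because $\mathcal{M}$ is constructed from a transitive set containing $\mathbb{R}$ and $\Gamma$ using a class predicate, it is an inner model of $\mathsf{ZF}$ containing $\mathbb{R} \cup \Gamma \cup \{\Gamma\} \cup \Ord$. By Theorem~\ref{FreflectsuBthrm}, the selfduality of $\Gamma$ together with the semiscale property ensure that $\mathcal{M} \models \Gamma \subseteq \uB$.

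With these conditions in hand, Lemma~\ref{Fabslem} applies to $\mathcal{M}$ itself, yielding
\[(F \restrict \Gamma)^{\mathcal{M}} = (F \restrict \Gamma) \cap \mathcal{M}.\]
This is the crucial absoluteness: the class predicate used inside $\mathcal{M}$ to perform relative constructibility coincides with the restriction to $\mathcal{M}$ of the predicate used to build $\mathcal{M}$ in $V$.

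Now I would interpret the definition of $L^{F \restrict \Gamma}(\Gamma, \mathbb{R})$ inside $\mathcal{M}$. This produces
\[\bigl(L^{F \restrict \Gamma}(\Gamma, \mathbb{R})\bigr)^{\mathcal{M}} = L^{(F \restrict \Gamma)^{\mathcal{M}}}(\Gamma, \mathbb{R}).\]
A straightforward induction on the constructibility rank shows that two $L$-style hierarchies built from the same transitive starting set and the same oracle at every stage produce identical levels. Since by the displayed equation the oracle $(F \restrict \Gamma)^{\mathcal{M}}$ agrees with $F \restrict \Gamma$ on everything in $\mathcal{M}$, the level-by-level induction gives
\[\bigl(L^{F \restrict \Gamma}(\Gamma, \mathbb{R})\bigr)^{\mathcal{M}} = L^{F \restrict \Gamma}(\Gamma, \mathbb{R}) = \mathcal{M}.\]
Hence $\mathcal{M}$ satisfies $V = L^{F \restrict \Gamma}(\Gamma, \mathbb{R})$.

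I do not anticipate a serious obstacle: both of the needed ingredients, the conclusion of Theorem~\ref{FreflectsuBthrm} and the absoluteness identity in Lemma~\ref{Fabslem}, are already established, and the remaining work is just the routine observation that two relative constructibility hierarchies built from matching data coincide.
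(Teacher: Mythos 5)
Your argument is correct and mirrors exactly what the paper has in mind: it combines Theorem~\ref{FreflectsuBthrm} (to get $\mathcal{M} \models \Gamma \subseteq \uB$) with Lemma~\ref{Fabslem} (applied to $\mathcal{M}$ itself to get $(F\restrict\Gamma)^{\mathcal{M}} = (F\restrict\Gamma)\cap\mathcal{M}$), and then the level-by-level agreement of the two relative-constructibility hierarchies — the same computation already carried out at the end of the proof of Lemma~\ref{Fabslem}. No gaps; this is precisely what the paper means by ``sees the definition of $F$ and therefore sees its own construction.''
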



\section{A model of $\mathsf{ZF} + \mathsf{AD}^+ + {}$``every set of reals is universally Baire.''}\label{amodelsec}

In this section we present our main theorem and outline its proof. First we recall some standard notation from the proof of Woodin's Derived Model Theorem, as presented in Steel \cite{SteDMT}. 
We let $\mathrm{ScS}$ denote the pointclass consisting of those $A \subseteq \omega^{\omega}$ such that $A$ and $\omega^{\omega} \setminus A$ are both Suslin (i.e., such that $A$ is Suslin and co-Suslin). 

\begin{defn}
 Let $\lambda$ be a limit of inaccessible cardinals and let $G \subseteq \Col(\omega,\mathord{<}\lambda)$ be a $V$-generic filter.  We define:
 \begin{itemize}
  \item $\mathbb{R}^*_G$ to be $\bigcup_{\xi < \lambda} \mathbb{R}^{V[G \restrict \xi]}$.
  \item $\HC^*_G$ to be $\bigcup_{\xi < \lambda} \HC^{V[G \restrict \xi]}$, where $\HC$ denotes the collection of hereditarily countable sets.
  \item $\Hom^*_G$ to be the pointclass consisting of all sets of the form $\p[S] \cap \mathbb{R}^*_G$, for some $\lambda$-absolutely complementing pairs of trees $(S,T)$ appearing in a model $V[G \restrict \xi]$ with $\xi < \lambda$.
 \end{itemize}
\end{defn}

We define the symmetric extension
$V(\mathbb{R}^*_G)$ to be $\HOD_{V \cup \mathbb{R}^*_G \cup \{\mathbb{R}^*_G\}}^{V[G]}$. 
By a theorem of G\"{o}del proved in Section 5.2 of \cite{Schindler:SetTheory}, 
$V(\mathbb{R}^{*}_{G})$ is a model of $\ZF$. 
By the definition of the model, $\mathbb{R}^*_G \subseteq \mathbb{R}^{V(\mathbb{R}^*_G)}$. The reverse containment $\mathbb{R}^{*}_{G} \subseteq \mathbb{R}^{V(\mathbb{R}^{*}_{G})}$ follows from the homogeneity of $\Col(\omega, \less\lambda)$ (by a standard argument which appears for instance on page 307 of \cite{SteDMT}): each member $x$ of $\mathbb{R}^{V(\mathbb{R}^{*}_{G})}$ is definable in $V[G]$ from $\{\mathbb{R}^{*}_{G}\}$ and parameters existing in some model $V[G \restrict \xi]$ with $\xi < \lambda$, and the homogeneity of the remainder of $\Col(\omega, \less\lambda)$ after $\xi$ implies that $x$ exists already in $V[G \restrict \xi]$. 

A similar homogeneity argument shows that 
$\ScS^{V(\mathbb{R}^*_G)} \subseteq \Hom^*_G$, since any pair of trees witnessing membership in $\ScS^{V(\mathbb{R}^*_G)}$ would exist in some such $V[G \restrict \xi]$ (being definable in $V[G]$ from $\{\bbR^{*}_{G}\}$ and parameters in such a model), and would have to be $\lambda$-absolutely complementing in order to project to complements in $V(\mathbb{R}^{*}_{G})$. 
Finally, $\Hom^{*}_{G} \subseteq \ScS^{V(\bbR^{*}_{G})}$ since $V(\bbR^{*}_{G})$ contains each model $V[G \restrict \xi]$ $(\xi < \lambda)$. 


With these definitions in hand we can state the main theorem of this paper.

\begin{mainthm*}
 Let $\lambda$ be a limit of Woodin cardinals and a limit of strong cardinals, let $G \subseteq \Col(\omega,\mathord{<}\lambda)$ be a $V$-generic filter and let $\mathcal{M}$ be the model \[L^F(\mathbb{R}^*_G, \Hom^*_G)^{V(\mathbb{R}^*_G)},\]
 where $L^F(\mathbb{R}^*_G, \Hom^*_G)$ denotes the class of all sets constructible from the transitive set $V_\omega \cup \mathbb{R}^*_G \cup \Hom^*_G$
 relative to the predicate $F$.
 Then the following hold. 
 \begin{enumerate}
  \item\label{item-ad-plus} $\mathcal{M} \models \ADR$.

  \item\label{item-powerset-R-M-is-homstar} $\powerset(\mathbb{R})^\mathcal{M} = \Hom^*_G$

  \item\label{item-every-set-ub} $\mathcal{M} \models {}$``every set of reals is universally Baire''. 

  \item\label{item-satisfies-V-is-HOD-P-R} $\mathcal{M} \models V = \HOD_{\powerset(\mathbb{R})}$.

 \end{enumerate}

Moreover, $\mathcal{M}$ has the following minimality property: if $\mathcal{M}' \subseteq V(\bbR^{*}_{G})$ is a model of $\mathsf{ZF} + {}$``every set of reals is universally Baire'' such that \[\mathbb{R}^*_G \cup \Hom^*_G \cup \Ord \subseteq \mathcal{M}',\] then $\mathcal{M} \subseteq \mathcal{M}'$.

\end{mainthm*}


\begin{rem}\label{relrem} By results of Moschovakis and Woodin (see Theorem 0.3 of \cite{Larson:Extensions}), the theory $\ZF$ + $\AD$ + ``all sets of reals are Suslin" implies $\AD^{+}$ (whose definition we give below). So $\AD^{+}$ holds in the model $\mathcal{M}$ from our Main Theorem. 
By results of Martin and Woodin (Theorem 13.1 of \cite{Larson:Extensions}), the theory $\ZF$ + $\AD$ + ``all sets of reals are Suslin" implies $\ADR$. So part (1) of the main theorem follows from part (3) plus the fact that $\mathcal{M} \models \AD$. We will in fact prove that $\mathcal{M} \models \AD^{+}$ on the way to proving part (3) of the main theorem. 
\end{rem}

\begin{rem}\label{oldnewrem}
A theorem of Woodin from the 1980s (Theorem 7.1 of \cite{SteDMT}) says, using the notation above, that if $\lambda$ is a limit of Woodin cardinals then $L(\bbR^{*}_{G}, \Hom^{*}_{G}) \models \AD^{+}$ and $\Hom^{*}_{G}$ is $\ScS^{L(\bbR^{*}_{G}, \Hom^{*}_{G})}$. The model $L(\bbR^{*}_{G}, \Hom^{*}_{G})$ is sometimes referred to as the old derived model. This is to contrast it with the (larger) new derived model $D(V, \lambda, G)$, i.e., $L(\Gamma_{+}, \bbR^{*}_{G})$, where $\Gamma_{+}$ is the set of $B \subseteq \bbR^{*}_{G}$ in $V(\bbR^{*}_{G})$ 
for which $L(B, \bbR^{*}_{G}) \models \AD^{+}$. Theorem 31 of \cite{WooSEMI} says that $D(V, \lambda, G) \models \AD^{+}$, again under the assumption that $\lambda$ is a limit of Woodin cardinals. The hypotheses of our main theorem imply (via Theorem \ref{thm-DM-at-limit-of-lt-lambda-strongs}) that the old and new derived models are equivalent (see Remark \ref{oldnewequivrem}). 
\end{rem}



Before starting the proof of the main theorem we review some material on homogeneously Suslin sets of reals (which is presented in more detail in \cite{LarStationaryTower, SteDMT}). A \emph{tree of measures} (on an ordinal $\gamma$, the choice of which will usually not concern us) is a family $\langle\mu_s : s \in \omega^{\mathord{<}\omega}\rangle$ of measures such that each measure $\mu_s$ concentrates on the set $\gamma^{\left|s\right|}$ and such that $\mu_t$ projects to $\mu_s$ whenever $t$ extends $s$, meaning that for each $A \in \mu_{s}$ the set of $\sigma \in \gamma^{|t|}$ with $\sigma \restrict |s| \in A$ is in $\mu_{t}$. Given a tree of measures $\vec{\mu} = \langle \mu_s : s \in \omega^{\mathord{<}\omega}\rangle$, 
for every real $x$ we let $\vec{\mu}_x$ denote the tower (i.e., projecting sequence) of measures $\langle \mu_{x \restrict n} : n<\omega\rangle$. We define the set of reals $\textbf{S}_{\vec{\mu}}$ to be the set of $x \in \omega^\omega$ for which $\vec{\mu}_x$ is wellfounded, i.e., for which the direct limit of the $\mu_{x \restrict n}$-ultrapowers of $V$ (for $n \in \omega$) induced by the projection maps is wellfounded. 
Given a cardinal $\kappa$, a set of reals $A$ is said to be \emph{$\kappa$-homogeneous} if $A = \textbf{S}_{\vec{\mu}}$ for some tree $\vec{\mu}$ of $\kappa$-complete measures.

By an observation of Woodin (see Steel \cite[Proposition~2.5]{SteDMT}) every tree $\vec{\mu}$ of $\kappa$-complete measures on $\gamma$ is a $\kappa$-homogeneity system for some tree $T$ on $\omega \times \gamma$, meaning that each measure $\mu_s$ concentrates on the set $T \cap \gamma^{\left|s\right|}$ and for every real $x \in \p[T]$ the tower $\vec{\mu}_x$ is well-founded. For every real $x \notin \p[T]$ then the tower $\vec{\mu}_x$ must be ill-founded, so $S_{\vec{\mu}} = \p[T]$. So all $\kappa$-homogeneous sets of reals are Suslin, and they are sometimes called \emph{$\kappa$-homogeneously Suslin} to emphasize this property. In the weakest nontrivial case, when $\kappa = \omega_{1}$, we say that $A$ is \emph{homogeneously Suslin}. 

By the Levy-Solovay theorem, for every generic extension $V[g]$ of $V$ by a poset of size less than $\kappa$, every $\kappa$-complete $\mu \in V$ induces a corresponding measure $\hat{\mu}$ in $V[g]$ and the ultrapower of the ordinals by $\mu$ in $V$ agrees with the ultrapower of the ordinals by $\hat{\mu}$ in $V[g]$. (Henceforth, we will denote both measures by $\mu$ where it will not cause confusion.) So we can also define the set $\textbf{S}_{\vec{\mu}}$ in small generic extensions, and we have $(\textbf{S}_{\vec{\mu}})^{V[g]} \cap V = (\textbf{S}_{\vec{\mu}})^V$.

Given a tree $\vec{\mu}$ of measures, the Martin--Solovay tree $\ms(\vec{\mu})$ is the tree of attempts to build a real $x$ and a sequence of ordinals witnessing the ill-foundedness of the corresponding tower $\vec{\mu}_x$ (see \cite{SteDMT, LarStationaryTower} for a precise definition.) The Martin--Solovay tree has the key property that
\[\textbf{S}_{\vec{\mu}} = \omega^\omega \setminus \p[\ms(\vec{\mu})].\]
If the measures in $\vec{\mu}$ are $\kappa$-complete, then definition of the Martin--Solovay tree is absolute to generic extensions via posets of cardinality less than $\kappa$, so the equality $\textbf{S}_{\vec{\mu}} = \omega^\omega \setminus \p[\ms(\vec{\mu})]$ continues to hold in such extensions. It follows that for any tree $T$ carrying a homogeneity system $\vec{\mu}$ consisting of $\kappa$-complete measures, the pair $(T, \ms(\vec{\mu}))$ is $\kappa$-absolutely complementing. This implies that every $\kappa$-homogeneous set of reals $A = \textbf{S}_{\vec{\mu}}$ is $\kappa$-universally Baire, and that whenever $V[g]$ is a generic extension of $V$ by a poset of size less than $\kappa$ we have $A^{V[g]} = (\textbf{S}_{\vec{\mu}})^{V[g]}$. In other words, the canonical extension of $A$ given by $\kappa$-homogeneity agrees with that given by $\kappa$-universal Baireness. 

Another property of the Martin--Solovay tree that we will use is the fact that its definition from $\vec{\mu}$ is continuous: for every $n<\omega$ the subtree $\ms(\vec{\mu}) \restrict n$ consisting of the first $n$ levels of $\ms(\vec{\mu})$ is determined by finitely many measures from $\vec{\mu}$.

\begin{rem}\label{homubequivrem}
If $\kappa$ is a cardinal and $\delta_0$ and $\delta_1$ are Woodin cardinals with $\kappa < \delta_0 < \delta_1$, then every $\delta_1^+$-universally Baire set of reals is $\kappa$-homogeneously Suslin. This follows by combining a theorem of Woodin (see Larson \cite[Theorem 3.3.8]{LarStationaryTower} or Steel \cite[Theorem 4.1]{SteDMT})
with a theorem of Martin and Steel \cite{MarSteProjDet}.
In particular, if $\lambda$ is a limit of Woodin cardinals then every $\lambda$-universally Baire set of reals is $\mathord{<}\lambda$-homogenously Suslin (i.e., $\kappa$-homogeneous for all $\kappa < \lambda$; we write $\Hom_{\less\lambda}$ for the set of $\less\lambda$-homgeneously Suslin sets of reals). As outlined above, the reverse inclusion follows from the Martin--Solovay construction.
\end{rem}

We now proceed to give the proof of the main theorem, assuming three facts that will be proved in later sections. As in the proof of the Derived Model Theorem, part (\ref{item-ad-plus}) will be obtained as an immediate consequence of a $\Sigma^2_1$ reflection property of the model $\mathcal{M}$, as follows.


\begin{lem*}[\ref{lem-Sigma-2-1-reflection}]
 Let $\lambda$ be a limit of Woodin cardinals and a limit of strong cardinals,  let $G \subseteq \Col(\omega,\mathord{<}\lambda)$ be a $V$-generic filter and let 
 $\mathcal{M}$ be the model $L^F(\mathbb{R}^*_G, \Hom^*_G)^{V(\mathbb{R}^*_G)}$.
 For every sentence $\varphi$, if there is a set of reals $A\in \mathcal{M}$ such that $(\HC^*_G,\mathord{\in},A) \models \varphi$, then there is a set of reals $A \in \Hom_{\mathord{<}\lambda}^V$ such that $(\HC^{V},\mathord{\in},A) \models \varphi$.
\end{lem*}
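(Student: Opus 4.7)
The plan is to prove reflection by an $\mathbb{R}$-genericity iteration argument combined with an absoluteness analysis of the predicate $F$. Given a witnessing set of reals $A \in \mathcal{M}$ with $(\HC^*_G, \in, A) \models \varphi$, first use part (\ref{item-powerset-R-M-is-homstar}) of the Main Theorem to conclude that $A \in \Hom^*_G$; fix $\xi_0 < \lambda$ and a $\mathord{<}\lambda$-absolutely complementing pair $(S,\tilde S) \in V[G \restrict \xi_0]$ with $A = \p[S] \cap \mathbb{R}^*_G$. By Remark \ref{homubequivrem}, refine this to a tree $T \in V[G \restrict \xi_0]$ carrying a $\mathord{<}\lambda$-homogeneity system $\vec\mu$ such that $A = \mathbf{S}_{\vec\mu}$ and $(T, \ms(\vec\mu))$ is $\mathord{<}\lambda$-absolutely complementing.

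Next, I would form a countable elementary substructure $X \prec H_\eta^{V[G \restrict \xi_0]}$ for sufficiently large $\eta$, containing $T$, $\vec\mu$, $\lambda$, and cofinal $\omega$-sequences both of Woodin cardinals and of strong cardinals below $\lambda$. Let $\pi \colon \bar M \to X$ be the inverse transitive collapse, and write $\bar\lambda, \bar T, \bar{\vec\mu}, \langle \bar\delta_n : n<\omega\rangle$ for the relevant preimages. Apply the $\mathbb{R}$-genericity iteration machinery of Section \ref{sec-R-genericity-iterations} to iterate $\bar M$ through the Woodin cardinals $\bar\delta_n$, producing in $V$ an iteration map $j \colon \bar M \to N$ together with an $N$-generic filter $H \subseteq \Col(\omega, \mathord{<}j(\bar\lambda))$ satisfying $\mathbb{R}^*_H = \mathbb{R}^V$, and hence $\HC^*_H = \HC^V$.

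Now the statement $(\HC^*_G, \in, A) \models \varphi$ reflects down to a statement about $\bar M$ via elementarity of $\pi$, and by the $F$-absoluteness results of Section \ref{sec-F-absoluteness} it transfers along $j$ to yield that inside the derived-model structure built from $(N, H)$ in $V$, the set $A^* = \p[j(\bar T)]^V$ satisfies $(\HC^*_H, \in, A^*) \models \varphi$. Because $\HC^*_H = \HC^V$, this immediately yields $(\HC^V, \in, A^*) \models \varphi$. To see that $A^* \in \Hom_{\mathord{<}\lambda}^V$, the iteration copies $\bar{\vec\mu}$ to a homogeneity system on $j(\bar T)$; the hypothesis that $\lambda$ is a limit of strong cardinals supplies the external extenders used to lift the iterated tower measures so that they are $\mathord{<}\lambda$-complete in the external universe $V$ and not merely in $N$.

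The principal obstacle is coordinating the two sides: one must design the genericity iteration of $\bar M$ so that all of $\mathbb{R}^V$ is simultaneously absorbed as $\mathbb{R}^*_H$ for a single $N$-generic $H$, while keeping the image tree $j(\bar T)$ controlled enough to remain $\mathord{<}\lambda$-homogeneously Suslin in $V$. This is precisely where the joint hypothesis is essential: the Woodin cardinals drive the genericity iteration of $\bar M$, whereas the strong cardinals supply the closure needed to maintain external homogeneity of the image system. Once Sections \ref{sec-R-genericity-iterations} and \ref{sec-F-absoluteness} provide these two ingredients, the rest of the argument is a routine chaining of elementarity through the absoluteness of the predicate $F$.
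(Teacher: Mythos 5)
There is a genuine gap, and it sits exactly at the hardest point of the argument.

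First, a circularity issue at the very start: you invoke part (\ref{item-powerset-R-M-is-homstar}) of the Main Theorem to conclude $A \in \Hom^*_G$, but that part is proved in the paper \emph{after} and \emph{using} Lemma \ref{lem-Sigma-2-1-reflection} (part (\ref{item-powerset-R-M-is-homstar}) needs $\mathsf{AD}^+$ in $\mathcal{M}$, which comes from part (\ref{item-ad-plus}), which comes from $\Sigma^2_1$ reflection). At the point where Lemma \ref{lem-Sigma-2-1-reflection} is being proved, you do not know that $\powerset(\mathbb{R})^{\mathcal{M}} \subseteq \Hom^*_G$; the set $A$ could in principle have been constructed at some later stage of the $L^F$ hierarchy from $\Hom^*_G$ via the predicate $F$. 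The paper deals with an $A$ that merely lies in $\mathcal{M}$, without assuming it is itself homogeneously Suslin.

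Second, and more seriously, your final step asserting $A^* = \p[j(\bar{T})]^V \in \Hom^{V}_{\mathord{<}\lambda}$ does not hold up. The homogeneity system $j(\bar{\vec{\mu}})$ consists of measures of the countable iterate $N$, not of $V$; ``copying'' them and ``lifting by strong cardinals'' is not a mechanism the paper uses or that obviously works, and you give no argument for why the resulting tower system would actually be a tree of $\kappa$-complete $V$-measures with the wellfoundedness/illfoundedness dichotomy required. The paper's route to $\Hom_{\mathord{<}\lambda}$ is fundamentally different: Lemma \ref{lem-absolutely-definable} shows $A \in \Hom_{\mathord{<}\lambda}$ provided $A$ has a fixed definition $\psi[C,x]$ which computes $A$ correctly \emph{in every} $\pi_0$-realizable $\mathbb{R}$-genericity iterate $P_\omega(\mathbb{R}^V)$; the proof of that lemma uses Windszus's homogeneously Suslin set $W$ of (iteration tree, realizable branch) pairs together with Martin--Steel. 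To make this apply, the paper must carefully choose a \emph{canonical least $\varphi$-witness} (the least $\beta$ with a witness in $L_\beta^{F}(\mathbb{R},\uB)^V$, the least witness under a canonical wellordering from a parameter $C$), and must split into two cases depending on whether $\uB^{P_\omega(\mathbb{R}^V)}$ fills up $\uB^V$ or not; the minimality and the case analysis are precisely what gives the required uniform definition across all iterations. Your proposal produces a witness from a single genericity iteration and gives no reason why that single set admits a definition that is stable across all other iterations. Without that uniformity, Lemma \ref{lem-absolutely-definable} cannot be invoked, and the intended conclusion $A^* \in \Hom_{\mathord{<}\lambda}^V$ is left unjustified.
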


Taking Lemma \ref{lem-Sigma-2-1-reflection}
for granted, we
immediately get that $\mathcal{M} \models \AD$, since $\mathord{<}\lambda$-homogeneously Suslin sets of reals are determined whenever $\lambda$ is greater than a measurable cardinal, a fact which  was essentially shown by Martin in the course of proving determinacy for $\bfPi^1_1$ sets of reals from a measurable cardinal (see Theorem 33.31 of \cite{Jec2002} or Exercise 32.2 of \cite{KanHigherInfinite}).

Lemma \ref{lem-Sigma-2-1-reflection} also implies that $\mathcal{M} \models \AD^{+}$. One way to see this is to note that  $\mathsf{AD}^+$ can be reformulated as a $\Pi^2_1$ statement that is true in the pointclass $\Hom_{\mathord{<}\lambda}^V$.
In more (standard) detail, the axiom $\AD^{+}$ is the conjunction of the following three statements: 
\begin{itemize}
	\item $\DC_{\bbR}$
	\item Every set of reals is $\infty$-Borel. 
	\item $\less\Theta$-Determinacy. 
\end{itemize}

The axiom $\DC_{\bbR}$ says that every tree on $\bbR$ without terminal nodes has an infinite branch. Using a pairing function on $\bbR$, we can represent such a tree $T$ with the set $A$ of reals coding sequences in $T$. Since homogeneously Suslin sets are Suslin, they can be uniformized. It follows that if $A$ is homogeneously Suslin then there is a function $f$ picking for each $x \in A$ a $y \in A$ coding a proper extension in $T$ of the node coded by $x$.  
Starting with any $x \in A$ and coding the sequence $\langle f^{i}(x) : i \in \omega\rangle$ with a real, we get that an infinite branch through $A$ exists in $(\HC, \in, A)$. This shows that $\DC_{\bbR}$ holds (in $V$) for all trees on $\bbR$ coded by homogeneously Suslin trees. Lemma \ref{lem-Sigma-2-1-reflection} then implies that it holds for all trees on $\bbR$ in $\mathcal{M}$. 

Similarly, $\AD$ implies that $\less\Theta$-Determinacy holds for games with Suslin, co-Suslin payoff (see Corollary 7.3 of \cite{Larson:Extensions}). Since $\less\lambda$-homogeneously Suslin sets are Suslin and co-Suslin (being $\less\lambda$-universally Baire by the remarks above), and the games referred to in the definition of $\less\Theta$-Determinacy can be coded by sets of reals, Lemma \ref{lem-Sigma-2-1-reflection} then implies that $\less\Theta$-Determinacy holds in $\mathcal{M}$. Finally, the arguments in Section 8.2 of \cite{Larson:Extensions} show that if a set $A \subseteq \omega^{\omega}$ is $\infty$-Borel, then $A$ has an $\infty$-Borel code coded by a set of reals which is ${\bf \Sigma}^{1}_{1}$ in a prewellordering which is Wadge below either $A$ or its complement, i.e., one coded in the structure $(\HC, \in, A)$. Every homogeneously Suslin set $A$ is Suslin, and therefore has an $\infty$-Borel code definable over $(\HC, \in A)$. 



The proof of the $\Sigma^2_1$ reflection property (i.e., Lemma \ref{lem-Sigma-2-1-reflection}) is the most technically demanding part of this paper. It will be given in Section \ref{sec-sigma-2-1-reflection} after some background information and preliminary work are presented in Sections \ref{sec-R-genericity-iterations} and \ref{sec-F-absoluteness} respectively. Our proof will be similar to that given by Steel \cite{SteSTFree} for the model $L(\mathbb{R}^*_G, \Hom^*_G)$. Roughly speaking, the proof can be adapted to the model $\mathcal{M}$ because the information added by the predicate $F$ is canonical.

Next we outline the proof of part (\ref{item-powerset-R-M-is-homstar}) of the main theorem. It suffices to prove the inclusion $\powerset(\mathbb{R})^\mathcal{M} \subseteq \Hom^*_G$, since the reverse inclusion holds by the definition of the model $\mathcal{M}$. Let $A$ be a set of reals in $\mathcal{M}$. Note that $\mathsf{AD}^+$ holds in the model $L(A,\mathbb{R}^*_G)$ since it holds in $\mathcal{M}$ and,
due to its equivalent $\Pi^2_1$ reformulation,
$\AD^{+}$ is downward absolute to transitive inner models with the same reals (Theorem 8.22 of \cite{Larson:Extensions}). 
Therefore $A$ is in the new derived model $D(V,\lambda,G)$ (which was defined in Remark \ref{oldnewrem}). 
Since $\lambda$ is a limit of $\mathord{<}\lambda$-strong cardinals, $D(V,\lambda,G)$ satisfies the statement that every set of reals is Suslin, by a theorem of Woodin which appears as Theorem \ref{thm-DM-at-limit-of-lt-lambda-strongs} below.  
Therefore $A \in \ScS^{D(V,\lambda,G)} \subseteq \ScS^{V(\mathbb{R}^*_G)} = \Hom^*_G$, so we have $A \in \Hom^*_G$.

To prove part (\ref{item-every-set-ub}) of the main theorem, we begin with the observation that every set in $\Hom^*_G$ is $\mathord{<}\Ord$-universally Baire in the symmetric model $V(\mathbb{R}^*_G)$. This follows from the assumption that $\lambda$ is a limit of strong cardinals by a standard argument: a pair of trees $(S,T)$ for a $\Hom^*_G$ set appears $V[G\restrict \xi]$ for some $\xi < \lambda$, where it is $\lambda$-absolutely complementing. Since $\lambda$ is a limit of strong cardinals in $V$ there is a strong cardinal $\kappa < \lambda$ in $V[G\restrict \xi]$. For any cardinal $\chi$ then there is an elementary embedding $j \colon V \to M$ with critical point $\kappa$ and $j(\kappa) > \chi$ and $V_{\chi} \subseteq M$. Then $(j(S), j(T))$ is a $\chi$-complementing pair in $V(\bbR^{*}_{G})$, and $\p[S] = \p[j(S)]$.  




	

It then follows that sets in $\Hom^*_G$ are  (fully) universally Baire in $V(\mathbb{R}^*_G)$, by the following lemma.

\begin{lem}\label{lem-ordinal-ac-equivalent}
 Let $\lambda$ be a cardinal and let $G \subseteq \Col(\omega,\mathord{<}\lambda)$ be a $V$-generic filter. Then in the symmetric model $V(\mathbb{R}^*_G)$, every $\mathord{<}\Ord$-universally Baire set of reals is universally Baire. Moreover this holds locally: for every set $Z$ there is a cardinal $\eta$ such that every $\eta^{+}$-absolutely complementing pair of trees on $\omega \times \Ord$ is $\Col(\omega,Z)$-absolutely complementing.
\end{lem}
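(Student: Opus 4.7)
The main assertion of the lemma follows from the ``moreover'' clause together with Remark \ref{collapserem}: if $A \in V(\bbR^{*}_{G})$ is $\mathord{<}\Ord$-universally Baire, then it is $\eta^{+}$-universally Baire for every cardinal $\eta$; so given any set $Z \in V(\bbR^{*}_{G})$, choose $\eta$ as in the moreover clause to obtain an $\eta^{+}$-absolutely complementing pair of trees for $A$, which is then $\Col(\omega, Z)$-absolutely complementing; hence $A$ is $\Col(\omega, Z)$-Baire, and Remark \ref{collapserem} delivers full universal Baireness.

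For the moreover clause, fix $Z \in V(\bbR^{*}_{G})$ and set $\eta = |\Col(\omega, Z)|^{V[G]}$, which is well-defined since $V[G] \models \ZFC$ and $Z \in V[G]$ (so for infinite $Z$, $\eta = |Z|^{V[G]}$). I claim this $\eta$ suffices. Let $(T, \tilde{T}) \in V(\bbR^{*}_{G})$ be an $\eta^{+}$-absolutely complementing pair of trees on $\omega \times \Ord$. Disjointness of the projections in any $V(\bbR^{*}_{G})$-extension by $\Col(\omega, Z)$ is immediate from absoluteness of wellfoundedness of the tree $\{(s, t, u) : (s, t) \in T \wedge (s, u) \in \tilde{T}\}$ on $\omega \times \Ord \times \Ord$, so the content is the covering property.

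The strategy for covering is to transfer information from $V(\bbR^{*}_{G})$ to $V[G]$, where choice provides an isomorphism between $\Col(\omega, Z)$ and the wellordered forcing $\Col(\omega, \eta)$. First note that trees on $\omega \times \Ord$ are coded as sets of ordinals, and by the standard homogeneity argument for $\Col(\omega, \mathord{<}\lambda)$ over $V$, every set of ordinals in $V(\bbR^{*}_{G})$ lies in $V[G \restrict \xi]$ for some $\xi < \lambda$; hence $(T, \tilde{T}) \in V[G \restrict \xi_{0}]$ for some $\xi_{0} < \lambda$. In $V[G]$, fix a bijection $f \colon Z \to \eta$; it induces an isomorphism of posets $\Col(\omega, Z) \cong \Col(\omega, \eta)$. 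Since $(T, \tilde{T})$ is $\eta^{+}$-absolutely complementing in $V(\bbR^{*}_{G})$, it is in particular $\Col(\omega, \eta)$-absolutely complementing in $V(\bbR^{*}_{G})$, and this property is equivalent to wellfoundedness of an explicit ``tree of attempts to name a real forced outside $\p[T] \cup \p[\tilde{T}]$,'' which is absolute between $V[G \restrict \xi_{0}]$, $V(\bbR^{*}_{G})$, and $V[G]$. Combined with the isomorphism $f$, this yields that $(T, \tilde{T})$ is $\Col(\omega, Z)$-absolutely complementing in $V[G]$.

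Finally, given a $V(\bbR^{*}_{G})$-generic $H \subseteq \Col(\omega, Z)^{V(\bbR^{*}_{G})}$ and a name $\dot x \in V(\bbR^{*}_{G})$ for a real, consider \[D = \{p \in \Col(\omega, Z) : p \forces \dot{x} \in \p[T] \cup \p[\tilde{T}]\},\] which lies in $V(\bbR^{*}_{G})$ by definability from $T, \tilde{T}, \dot{x}, \Col(\omega, Z) \in V(\bbR^{*}_{G})$. Density of $D$ in $V(\bbR^{*}_{G})$ is equivalent to wellfoundedness of a further explicit tree built from these data, and by the previous paragraph this wellfoundedness holds in $V[G]$; as it is absolute between transitive models, it holds in $V(\bbR^{*}_{G})$ too. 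Then $H \cap D \neq \emptyset$ gives $\dot{x}_{H} \in \p[T] \cup \p[\tilde{T}]$ in $V(\bbR^{*}_{G})[H]$, as required. The principal technical obstacle is to make the ``translation via $f$'' rigorous---since $f \in V[G]$ need not belong to $V(\bbR^{*}_{G})$---which requires careful use of the localization of $(T, \tilde{T})$ to $V[G \restrict \xi_{0}]$, the homogeneity of the tail forcing $\Col(\omega, [\xi_{0}, \lambda))$, and the treatment of $V(\bbR^{*}_{G})$-names as a subclass of $V[G]$-names, so that the relevant ``covering trees'' in the two models have the same wellfoundedness status.
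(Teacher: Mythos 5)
Your proposal identifies the right three models to work with ($V[G\restrict\xi_0]$, $V(\bbR^*_G)$, $V[G]$), correctly invokes homogeneity to localize the trees, and correctly routes the final step through downward absoluteness together with Remark \ref{collapserem}. But there is a genuine gap, and it is precisely at the point you flag as ``the principal technical obstacle.''

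The problem is your choice $\eta = |Z|^{V[G]}$. The paper takes $\eta = \max(\lambda, |Z|^{V[G]})$, and the extra $\max$ with $\lambda$ is doing real work. The pair $(T,\tilde T)$ lives in $V[G\restrict\xi_0]$, and $V[G] = V[G\restrict\xi_0][G_{\mathrm{tail}}]$ where $G_{\mathrm{tail}}$ is generic for a poset of size $\lambda$. To transfer the $\eta^+$-absolute complementation from $V[G\restrict\xi_0]$ up to $V[G]$, one needs to know that any $\Col(\omega,\eta)$-generic extension of $V[G]$ is also a $\Col(\omega,\eta)$-generic extension of $V[G\restrict\xi_0]$, and this holds precisely because the combined forcing $\Col(\omega,[\xi_0,\lambda))\times\Col(\omega,\eta)$ has size $\leq\eta$ and collapses $\eta$ to $\omega$; when $\eta < \lambda$ this absorption fails. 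Your proposal tries to replace this forcing-absorption argument with ``wellfoundedness of an explicit tree'' absoluteness, but the statement that a pair of trees projects to complements in all $\Col(\omega,\eta)$-extensions of a model $M$ is \emph{not} a $\Pi^1_1$ fact about a fixed tree: the covering direction ($\p[T]\cup\p[\tilde T] = \omega^\omega$ in $M[g]$) is $\Pi^1_2$, and it quantifies over $\Col(\omega,\eta)$-names in $M$, which is model-dependent and only downward absolute, not upward. Since $V(\bbR^*_G)\subseteq V[G]$, the transfer you need is upward, so the ``same wellfoundedness status'' you gesture at cannot be arranged by trees alone. A secondary symptom of the same issue is that when $\eta<\lambda$, the cardinal $\eta^+$ itself differs between $V[G\restrict\xi_0]$ and $V[G]$ (the tail collapse makes everything below $\lambda$ countable in $V[G]$), so ``$\eta^+$-absolutely complementing'' is not even a stable notion across your models; the paper's remark that $\lambda\leq\eta$ guarantees $\eta^+$ is the same everywhere is exactly what addresses this. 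Fix: set $\eta = \max(\lambda, |Z|^{V[G]})$ and replace the tree-absoluteness step with the absorption argument.
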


\begin{proof}
 Fix $Z$, let $\eta = \max(\lambda, \left|Z\right|^{V[G]})$ and let $(S,T)$ be an $\eta^{+}$-absolutely complementing pair of trees in $V(\mathbb{R}^*_G)$. Then, as above, because $S$ and $T$ are subsets of the ground model, a standard homogeneity argument shows that $(S,T) \in V[G \restrict \xi]$ for some ordinal $\xi < \lambda$. The pair $(S,T)$ is $\eta^{+}$-absolutely complementing in the model $V[G \restrict \xi]$ also, because this property is downward absolute (since $\lambda \leq \eta$ the cardinal successor of $\eta$ is the same in all models under consideration).

Since $\eta \ge \lambda$, every generic extension of $V[G]$ by the poset $\Col(\omega,\eta)$ is also a generic extension of $V[G \restrict \xi]$ by the poset $\Col(\omega,\eta)$. It follows that the pair $(S,T)$ is $\eta^{+}$-absolutely complementing in $V[G]$. Since $\eta \ge \left|Z\right|^{V[G]}$, this implies that the pair $(S,T)$ is $\Col(\omega, Z)$-absolutely complementing in $V[G]$. This property is downward absolute, so the pair $(S,T)$ is $\Col(\omega, Z)$-absolutely complementing in $V(\mathbb{R}^*_G)$ as desired.
\end{proof}

To prove part (\ref{item-every-set-ub}) of the main theorem, it remains to show that the universal Baireness of $\Hom^*_G$ sets in $V(\mathbb{R}^*_G)$ is absorbed by the model $\mathcal{M}$ via the $F$ predicate. This would follow from Theorem \ref{FreflectsuBthrm} if we knew that each member of $\Hom^{*}_{G}$ carried a semiscale coded by a set in $\Hom^{*}_{G}$. This fact is proved in Section \ref{sec-DM-at-limit-of-lt-lambda-strongs}, giving the following lemma, finishing the proof of part (\ref{item-every-set-ub}), and part (1), by Remark \ref{relrem}.

\begin{lem*}[\ref{lem-absorption-of-uB}]
 Let $\lambda$ be a limit of Woodin cardinals and a limit of strong cardinals, and  let $G \subseteq \Col(\omega,\mathord{<}\lambda)$ be a $V$-generic filter.
 Then every set of reals in $\Hom^*_G$ is universally Baire in $L^F(\mathbb{R}^*_G, \Hom^*_G)^{V(\mathbb{R}^*_G)}$.
\end{lem*}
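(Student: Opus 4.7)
The plan is to apply Theorem \ref{FreflectsuBthrm} inside the symmetric model $V(\bbR^*_G)$, with $\bbR^*_G$ in the role of $\bbR$ and $\Gamma := \Hom^*_G$ in the role of $\Gamma$. To do this I need to verify, working in $V(\bbR^*_G)$, three things: (i) $\Hom^*_G \subseteq \uB$; (ii) $\Hom^*_G$ is selfdual; and (iii) $\Hom^*_G$ has the semiscale property. Items (i) and (ii) are largely already established in the material preceding the lemma. Selfduality is immediate by swapping the two trees in a $\lambda$-absolutely complementing pair. Universal Baireness in $V(\bbR^*_G)$ of any $A \in \Hom^*_G$ follows because a pair of trees witnessing $A \in \Hom^*_G$ lies in some $V[G \restrict \xi]$ with $\xi < \lambda$; applying an elementary embedding arising from a strong cardinal $\kappa < \lambda$ of $V[G \restrict \xi]$ (using that $\lambda$ is a limit of strong cardinals of $V$) yields $\chi^+$-absolutely complementing trees projecting to $A$ for arbitrarily large $\chi$, so $A$ is $\mathord{<}\Ord$-universally Baire in $V(\bbR^*_G)$; Lemma \ref{lem-ordinal-ac-equivalent} then converts this to full universal Baireness there.

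The main technical content is (iii): every $A \in \Hom^*_G$ must admit a semiscale whose code is a continuous preimage of an element of $\Hom^*_G$. Given a $\lambda$-absolutely complementing pair $(S, T)$ in some $V[G \restrict \xi]$ with $A = \p[S] \cap \bbR^*_G$, I would use the leftmost-branch semiscale $\vec{\varphi}^S$ associated with $S$, defined by $\varphi^S_n(x) = \lb(S_x)(n)$. Its canonical code is the ternary relation
\[
R = \{ (\bar n, x, y) \in \omega^\omega \times A \times A : \varphi^S_n(x) \le \varphi^S_n(y) \},
\]
which is definable from $S$ via absolute wellfoundedness of an auxiliary tree on ordinals. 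The hard part is to show $R \in \Hom^*_G$: I must produce a $\lambda$-absolutely complementing pair of trees for $R$ (and its complement) living in some $V[G \restrict \eta]$ with $\eta < \lambda$. I would do this by combining the comparison-of-leftmost-branches construction with the Martin--Solovay / strong-cardinal amalgamation machinery developed in Section \ref{sec-DM-at-limit-of-lt-lambda-strongs}, which shows precisely that under the present hypotheses $\ScS^{V(\bbR^*_G)} = \Hom^*_G$ is closed under exactly the kind of Suslin operations needed to witness $R \in \Hom^*_G$. This is where the assumption that $\lambda$ is a limit of strong cardinals, and not merely of Woodin cardinals, enters.

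Once (i)--(iii) are in hand, Theorem \ref{FreflectsuBthrm} applied inside $V(\bbR^*_G)$ yields
\[
L^{F \restrict \Hom^*_G}(\Hom^*_G, \bbR^*_G)^{V(\bbR^*_G)} \models \Hom^*_G \subseteq \uB.
\]
To close the argument I would verify that this inner model coincides with $L^F(\bbR^*_G, \Hom^*_G)^{V(\bbR^*_G)}$. The key observation is that by part (\ref{item-powerset-R-M-is-homstar}) of the Main Theorem (whose proof above does not use the present lemma) any set of reals in $\mathcal{M}$ is already in $\Hom^*_G$, so whenever a tuple $(A,\mathbb{P},p,\dot x) \in F$ is encountered with $A$ a set of reals built at some stage of the construction, one necessarily has $A \in \Hom^*_G$, and the two predicates $F$ and $F \restrict \Hom^*_G$ give the same information. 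A straightforward induction on the constructibility hierarchy, parallel to the argument of Lemma \ref{Fabslem}, then shows the two models are equal, so the universal Baireness of $\Hom^*_G$ transfers to $\mathcal{M}$, which is the conclusion.

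The step I expect to be the main obstacle is precisely (iii), namely upgrading the natural semiscale $\vec{\varphi}^S$ to one whose code is recognized as an element of $\Hom^*_G$. This demands the strong-cardinal-level tree amalgamations of Section \ref{sec-DM-at-limit-of-lt-lambda-strongs}; the remaining steps are essentially bookkeeping on top of the abstract reflection provided by Theorem \ref{FreflectsuBthrm}.
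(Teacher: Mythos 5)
Your overall framework is the same as the paper's: verify the hypotheses of Theorem~\ref{FreflectsuBthrm} inside $V(\bbR^*_G)$ with $\Gamma = \Hom^*_G$, and your treatment of (i) and (ii) is correct. However, your approach to (iii) — the semiscale property — has a genuine gap, and you have the division of labor between the two large cardinal hypotheses backwards.

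The gap: you propose to use the leftmost-branch semiscale $\vec{\varphi}^S$ associated with a tree $S$ projecting to $A$, and then show its code $R$ lies in $\Hom^*_G$. This does not work as stated. The relation $\varphi^S_n(x) \le \varphi^S_n(y)$ is defined by comparing leftmost branches, and while the positive comparison is a $\Sigma^1_1$-in-$S$ (hence Suslin) condition, identifying the \emph{leftmost} branch of $S_y$ is a $\Pi^1_1$-in-$S$ condition, so the code $R$ of $\vec{\varphi}^S$ is not in general Suslin, let alone Suslin and co-Suslin (which is what $\Hom^*_G = \ScS^{V(\bbR^*_G)}$ requires). The ``Martin--Solovay / strong-cardinal amalgamation machinery'' of Section~\ref{sec-DM-at-limit-of-lt-lambda-strongs} does not rescue this: Lemma~\ref{lem-suslin-norm-co-suslin} takes a Suslin-\emph{norm} as a \emph{hypothesis} and does not by itself manufacture norm relations in $\Hom^*_G$ from a bare Suslin representation. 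What is actually needed here is the Martin--Steel--Woodin scale construction, cited in the paper as Steel~\cite[Theorem~5.3]{SteDMT}, which produces genuine $\uB_\lambda = \Hom_{\less\lambda}$ (semi)scales on $\Hom_{\less\lambda}$ sets using the fact that $\lambda$ is a limit of \emph{Woodin} cardinals. This is a much deeper construction than taking leftmost branches, and it is the crux of the lemma. Consequently your attribution is reversed: Woodin cardinals give (iii) (the semiscale property), while the strong cardinals are what give (i) (that $\Hom^*_G \subseteq \uB^{V(\bbR^*_G)}$, via the pre-lemma discussion and Lemma~\ref{lem-ordinal-ac-equivalent}), which you actually handled correctly.

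Your closing observation about reconciling $L^{F \restrict \Hom^*_G}(\Hom^*_G, \bbR^*_G)$ with $L^F(\bbR^*_G, \Hom^*_G)$ is a fair point — the paper elides this — and using part~(\ref{item-powerset-R-M-is-homstar}) of the Main Theorem to argue the two predicates coincide on the model is a reasonable way to make it explicit.
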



For part (\ref{item-satisfies-V-is-HOD-P-R}) of the main theorem, parts (\ref{item-powerset-R-M-is-homstar}) and (\ref{item-every-set-ub}), along with Theorem \ref{Fabslem} (with $\Gamma = \Hom^{*}_{G}$) imply that the restriction of $F$ to $\mathcal{M}$ is the same as the predicate $F$ defined in $\mathcal{M}$. As in Corollary \ref{Mabscor}, it follows that $\mathcal{M}$ sees its own construction (in particular $\mathcal{M}= L^{F}(\bbR, \mathcal{P}(\bbR))^{\mathcal{M}}$), so every element of $\mathcal{M}$ is ordinal-definable in $\mathcal{M}$ from a member of $\mathcal{P}(\bbR)^{\mathcal{M}}$. 


For the last part of the theorem (the minimality property of $\mathcal{M}$), note that if $\mathcal{M}' \subseteq V(\bbR^{*}_{G})$ contains $\Hom^{*}_{G}$ and satisfies the statement that all sets of reals are universally Baire (and thus Suslin), then, since $\Hom^{*}_{G} = \ScS^{V(\bbR^{*}_{G})}$, $\Hom^{*}_{G} = \mathcal{P}(\bbR)^{\mathcal{M}'}$, and in particular $\Hom^{*}_{G}$ is an element of $\mathcal{M}'$. It follows from Lemma \ref{Fabslem} then that $\mathcal{M} \subseteq \mathcal{M}'$. 


%
%
%

To finish the proof of the main theorem, then, it remains only to prove Lemmas \ref{lem-Sigma-2-1-reflection} and \ref{lem-absorption-of-uB}, and Theorem \ref{thm-DM-at-limit-of-lt-lambda-strongs}.

\section{$\mathbb{R}$-genericity iterations}\label{sec-R-genericity-iterations}


As in Steel's stationary-tower-free proof \cite{SteSTFree} of $\Sigma^2_1$ reflection for the model $L(\mathbb{R}^*_G, \Hom^*_G)$, our proof of $\Sigma^2_1$ reflection for the model $\mathcal{M}$ of the main theorem will make fundamental use 
of the genericity iterations of Neeman \cite{NeeOptimalII}, repeated $\omega$ many times to produce an $\mathbb{R}$-genericity iteration.
We refer the reader also to Neeman \cite{NeeHandbook} for a thorough account of the use of $\mathbb{R}$-genericity iterations to prove $\mathsf{AD}$ in $L(\mathbb{R}^*_G)$ by a similar reflection argument.

The definitions we give below are more or less standard; we include them here to set out the notation that we will use in the rest of the paper. 
In the following definition, ``sufficient fragment" can be taken to mean sufficiently large for the definition (e.g. Woodin cardinals and generic extensions) to make sense. In practice $P_{0}$ will be the transitive collapse of a suitably large rank initial segment of the universe. 

\begin{defn}\label{defn-R-gen-iter}
 Let $P_0$ be a model of a sufficient fragment of $\ZFC$
and let $\bar{\lambda}$ be a limit of Woodin cardinals of $P_0$.
 An \emph{$\mathbb{R}$-genericity iteration of $P_0$ at $\bar{\lambda}$} is a sequence
 \[\langle P_{j}, i_{jk}, x_{\ell}, \delta_{\ell}, g_{\ell} : j \leq k \leq \omega, \ell < \omega \rangle,\] existing in a generic extension of $V$ by $\Col(\omega,\mathbb{R})$, such that:
 \begin{enumerate}
  \item $P_j$ is in $V$, for each $j < \omega$,
  \item $i_{jk}$ is an elementary embedding from $P_j$ to $P_k$, for all $j \leq k \leq \omega$, 
  \item $i_{jk}$ is an element of $V$, whenever $j \leq k < \omega$,
  \item $x_\ell \in \bbR^{V}$, for all $\ell < \omega$,
  \item each $\delta_{\ell}$ is a Woodin cardinal below  $i_{0,\ell}(\bar{\lambda})$ in $P_{\ell}$,
  \item\label{genitem} for all $\ell < \omega$, $g_{\ell} \in V$ is a $P_{\ell+1}$-generic filter for the poset $\Col(\omega,i_{\ell,\ell+1}(\delta_\ell))$ such that $x_\ell \in P_{\ell+1}[g_{\ell}]$, 
  \item $i_{kp} \circ i_{jk} = i_{jp}$ whenever $j \le k \le p \le \omega$,
  \item $P_\omega$ and the maps $i_{j,\omega}$ ($j < \omega$) are obtained as direct limits from the maps $i_{jk}$ ($j \leq k < \omega$),
  \item $i_{j,j+1}(\delta_j) < \delta_{j+1}$ for all $j < \omega$, 
  \item\label{soitextem} for all $j < k <  \omega$, the map $i_{j+1,k+1}$ has critical point above $i_{j,j+1}(\delta_j)$,
  \item $\{x_j : x < \omega\} = \mathbb{R}^V$,
  \item $i_{0,\omega}(\bar{\lambda}) = \sup\{i_{j,\omega}(\delta_j) : j < \omega\}$ and
  \item\label{pomegasymmtem} there is a $P_\omega$-generic filter $g \subseteq \Col(\omega,\mathord{<}i_{0,\omega}(\bar{\lambda}))$ such that \[\mathbb{R}^V = \bigcup\{ \bbR \cap P_{\omega}[g \restrict \xi] : \xi < i_{0,\omega}(\bar{\lambda})\}.\]
  
 \end{enumerate}
\end{defn}

Parallel to the notation in Section \ref{amodelsec}, in the context of item (\ref{pomegasymmtem}) above we write
$P_{\omega}(\bbR^{V})$ for \[\HOD^{P_{\omega}[g]}_{P_{\omega} \cup \bbR^{V} \cup \{\bbR^{V}\}}.\] By item (\ref{soitextem}), each embedding $i_{j+1,k+1}$ extends to an elementary embedding from $P_{j+1}[g_j]$ to $P_{k+1}[g_j]$, which we call $i_{j+1,k+1}^*$.

\begin{rem}\label{gisomrem} Each partial order $\Col(\omega,i_{\ell,\ell+1}(\delta_\ell))$ as in item (\ref{genitem}) above is forcing-equivalent to the corresponding partial order \[\Col(\omega, (i_{\ell - 1, \ell + 1}(\delta_{\ell - 1}),i_{\ell, \ell+1}(\delta_{\ell})])\] in the case $\ell > 0$, and $\Col(\omega \leq i_{01}(\delta_{0}))$ in the case $\ell= 0$. In practice (i.e., in the proof of Lemma \ref{lem-absolutely-definable}), the generic filter $g$ in item (\ref{pomegasymmtem}) is the union of the 
images of the filters $g_{\ell}$ under isomorphisms witnessing these forcing-equivalences (via bookkeeping that we leave to the reader). 
\end{rem}

To ensure that the symmetric model $P_\omega(\mathbb{R}^V)$ resembles $V$ in some sense,
we will use $\mathbb{R}$-genericity iterations of countable hulls $P_0$ of rank initial segments of $V$ where the iteration maps factor into the uncollapse map of the hull. The notation below suppresses $\gamma$, which is implicitly assumed to be associated with the map $\pi_{0}$. 

\begin{defn}
 Let $\lambda$ be a limit of Woodin cardinals, let $\gamma > \lambda$ be an ordinal and let $\pi_0 \colon P_0 \to V_{\gamma}$ be an elementary embedding of a countable transitive set $P_0$ into $V$ such that $\lambda \in \ran(\pi_0)$.

 An $\mathbb{R}$-genericity iteration of $P_0$ at $\pi_0^{-1}(\lambda)$, as in Definition \ref{defn-R-gen-iter}, is \emph{$\pi_0$-realizable} if there are elementary embeddings $\pi_j : P_j \to V_{\gamma}$ for $j \le \omega$ that commute with the iteration maps, meaning that $\pi_k \circ i_{jk} = \pi_j$ for $j \le k \le \omega$, and such that $\pi_j \in V$ for all $j < \omega$.
\end{defn}

For simplicity we will sometimes abuse notation by referring to a $\pi_0$-realizable genericity iteration of $P_0$ at $\pi_0^{-1}(\lambda)$ as a $\pi_0$-realizable genericity iteration of $P_0$ at $\lambda$; this should not cause any confusion because $\lambda$ is a limit of Woodin cardinals and $\pi_0^{-1}(\lambda)$ is a countable ordinal.

\begin{rem}\label{treemovecorrem}
If $C$ is a $\lambda$-universally Baire set of reals, $\gamma > \lambda$ is a limit ordinal, $P_{0}$ is a countable transitive set and $\pi_0 \colon P_0 \to V_{\gamma}$ is an elementary embedding with $(C,\lambda) \in \ran(\pi_0)$, then for every $\pi_0$-realizable genericity iteration $P_0 \to P_\omega$ at $\lambda$ we have $C \in P_\omega(\mathbb{R}^V)$, and in particular $C$ is equal to $\pi_\omega^{-1}(C)^{P_\omega(\mathbb{R}^V)}$, the canonical expansion of $\pi_\omega^{-1}(C)$ in $P_\omega(\mathbb{R}^V)$. 
To see this, note that $\ran(\pi_0)$ contains a $\lambda$-absolutely complementing pair of trees $(S,T)$ for $C$. Then  $\p[\pi_\omega^{-1}(S)] \subseteq \p[S] = C$ and $\p[\pi_\omega^{-1}(T)] \subseteq \p[T]$, so the canonical expansion of $\pi_\omega^{-1}(C)$ in $P_\omega(\mathbb{R}^V)$ is equal to $C$.
\end{rem}



\begin{rem}
 The notion of $\mathbb{R}$-genericity iteration given by 
 Definition \ref{defn-R-gen-iter}  is weaker than the usual one because it does not require the elementary embeddings $i_{jk} \colon P_j \to P_k$ to be iteration maps.
 (We could have called them something like ``$\mathbb{R}$-genericity systems'' instead.)
 We do it this way so that the reader unfamiliar with iteration trees can take the following lemma (which is implicit in Steel \cite{SteSTFree}) as a black box; its proof is the only place where iteration trees will appear in this paper. 
\end{rem}
 
The conditions on $\gamma$ in the hypotheses of Lemma \ref{lem-absolutely-definable} are chosen to make $V_{\gamma}$ satisfy the theory in Definition 1.1 of \cite{MarSteIterationTrees}, with respect to $\lambda$. Recall from Remark \ref{homubequivrem} that $\Hom_{\less\lambda} = \uB_{\lambda}$ when $\lambda$ is a limit of Woodin cardinals. Remark \ref{treemovecorrem} shows that the expression $P_\omega(\mathbb{R}^V) \models \psi[C,x]$ in the statement of the lemma makes sense, i.e., that $C$ is in $P_{\omega}(\mathbb{R}^{V})$. 
 


\begin{lem}\label{lem-absolutely-definable}
 Let $\lambda$ be a limit of Woodin cardinals and let $C$ be a $\lambda$-universally Baire set of reals.
 Let $P_{0}$ be a countable transitive set and let $\gamma$ be a limit ordinal of cofinality greater than $\lambda$. Let $\pi_0 \colon P_0 \to V_{\gamma}$ be an elementary embedding with $(C,\lambda) \in \ran(\pi_0)$. Suppose that $A$ is a set of reals in $V$ and that there is a binary formula $\psi$ such that for every $\pi_0$-realizable $\mathbb{R}$-genericity iteration $P_0 \to P_\omega$ at $\lambda$ in $V^{\Col(\omega,\mathbb{R})}$ and every real $x \in V$ we have
 \[ x \in A \iff P_\omega(\mathbb{R}^V) \models \psi[C,x].\]
 Then $A \in \Hom_{\mathord{<}\lambda}$.
\end{lem}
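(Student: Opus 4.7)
To prove $A\in\Hom_{\less\lambda}$, it suffices to show that $A$ is $\kappa$-homogeneously Suslin for each cardinal $\kappa<\lambda$. Fix such a $\kappa$. The plan is to build a $\kappa$-complete tree of measures $\vec{\mu}=\langle\mu_s:s\in\omega^{\less\omega}\rangle$ and a tree $T$ on $\omega\times\mathrm{Ord}$ such that $\p[T]=A=\mathbf{S}_{\vec\mu}$. The essential leverage is that, by Remark \ref{treemovecorrem}, the canonical expansion of $\pi_\omega^{-1}(C)$ in $P_\omega(\bbR^V)$ is exactly $C$ itself. Consequently the statement ``$P_\omega(\bbR^V)\models\psi[C,x]$'' makes sense as a property of $x$ alone, and the hypothesis makes its truth value insensitive to the particular $\pi_0$-realizable $\mathbb{R}$-genericity iteration chosen.

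\textbf{Construction of the system.} Using the elementarity of $\pi_0$ together with the fact that $\pi_0^{-1}(\lambda)$ is a limit of Woodin cardinals of $P_0$, pick a Woodin cardinal $\bar\delta$ of $P_0$ with $\pi_0(\bar\delta)>\kappa$. The tree $T$ is the natural tree of attempts to build pairs $(x\restrict n,\vec\alpha)$, where $\vec\alpha$ codes the first $n$ stages of a $\pi_0$-realizable $\mathbb{R}$-genericity iteration of $P_0$ whose eventual symmetric model satisfies $\psi[C,x]$. For each $s\in\omega^{\less\omega}$, the measure $\mu_s$ averages over one-step $\pi_0$-realizable genericity iterations of $P_0$ at $\bar\delta$ which make $s$ an initial segment of an enumerated generic real; $\kappa$-completeness is inherited from the countably complete measures derived from $\bar\delta$ (whose $P_0$-image lies above $\kappa$), and projection compatibility among the iterations of different widths makes the $\mu_s$ cohere into a tree concentrating on $T$.

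\textbf{Verifying $A=\mathbf{S}_{\vec\mu}$, and the main obstacle.} For the forward direction, given $x\in A$, one extends the nodes underlying the $\mu_{x\restrict n}$-tower into a full $\pi_0$-realizable $\mathbb{R}$-genericity iteration $P_0\to P_\omega$ enumerating $x$; the direct limit accommodates the ultrapowers by $\mu_{x\restrict n}$, yielding wellfoundedness of $\vec\mu_x$. The converse is the principal obstacle: from wellfoundedness of $\vec\mu_x$ alone one must extract, by the $\mathbb{R}$-genericity iteration arguments of Neeman \cite{NeeHandbook} and Steel \cite{SteSTFree}, an actual $\pi_0$-realizable $\mathbb{R}$-genericity iteration in $V^{\Col(\omega,\bbR)}$ satisfying every clause of Definition \ref{defn-R-gen-iter}, including the full enumeration of $\bbR^V$ and the production of the symmetric-collapse generic in item (13). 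Once such an iteration is produced, the hypothesis on $\psi$ delivers $P_\omega(\bbR^V)\models\psi[C,x]$, whence $x\in A$; the technical core of the argument is thus weaving the tower ultrapower data through the iteration-tree construction that secures $\pi_0$-realizability.
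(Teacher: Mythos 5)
Your plan is to build the homogeneity system $\vec\mu$ directly, but this runs into an insurmountable difficulty precisely where you flag ``the principal obstacle,'' and the fix requires ingredients your proposal omits. The measures you describe — ``averaging over one-step $\pi_0$-realizable genericity iterations'' — do not exist in any useful sense: $\bar\delta$ is a Woodin cardinal of the \emph{countable} transitive set $P_0$, hence a countable ordinal in $V$, and the condition $\pi_0(\bar\delta) > \kappa$ does not transfer any completeness to measures living on the countable set of $P_0$-iterations. There is simply no naive $\kappa$-complete measure on such configurations, and the claim that coherence ``makes the $\mu_s$ cohere into a tree concentrating on $T$'' has no content without one.

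The paper's proof resolves exactly this problem by \emph{not} attempting to build a homogeneity system for $A$ directly. Instead it isolates the set $W$ of pairs $(\mathcal{T},b)$ where $\mathcal{T}$ is a $2^\omega$-closed iteration tree on $P_0$ (above $\bar\kappa$, based on $\delta_0$) and $b$ is a branch with $\mathcal{M}_b^{\pi_0\mathcal{T}}$ wellfounded. The decisive fact, due to K.~Windszus (Steel \cite[Lemma 1.1]{SteSTFree}), is that this set $W$ \emph{is} $\kappa$-homogeneously Suslin; this is where the $\kappa$-complete measures genuinely come from, and it is a nontrivial theorem rather than something one can rig up from the Woodin cardinals of $P_0$. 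One then shows $x\in A$ iff there exist $(\mathcal{T},b)\in W$ and a generic $g\subseteq\Col(\omega,i^{\mathcal T}_b(\delta_0))$ with $x\in\mathcal M^{\mathcal T}_b[g]$ such that $\psi[\p[i^\mathcal{T}_b(\bar S)],x]$ holds in the symmetric extension at $i^{\mathcal T}_b(\bar\lambda)$. Both directions are proved by assembling a full $\pi_0$-realizable $\mathbb{R}$-genericity iteration, using Neeman's genericity iteration theorem to arrange that $x$ becomes generic and Martin--Steel's branch existence theorem to keep the tree realizable. This characterization exhibits $A$ as a \emph{projection} of a $\less\lambda$-homogeneously Suslin set — i.e., $A$ is only shown weakly homogeneously Suslin at this stage. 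The final step, which your proposal also lacks, is Martin--Steel's theorem from the proof of projective determinacy: because $\lambda$ is a limit of Woodin cardinals, every $\less\lambda$-weakly homogeneously Suslin set is $\less\lambda$-homogeneously Suslin. Without the Windszus lemma and without the weak-to-full upgrade, your direct construction cannot be carried out.
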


\begin{proof}
 By the wellfoundedness of the Wadge hierarchy on homogeneously Suslin sets (see Theorem 3.3.5 of \cite{LarStationaryTower} and the discussion before it), there is minimal $\kappa < \lambda$ such that every $\kappa$-homogeneously Suslin set of reals is $\less\lambda$-homogeneously Suslin.  By the elementarity of $\pi$ we have $\kappa \in \ran(\pi_0)$ and $(S,T) \in \ran(\pi_0)$ for some $\lambda$-absolutely complementing pair of trees $(S,T)$ for $C$.
 Define $(\bar{\lambda},\bar{\kappa},\bar{S},\bar{T}) = \pi_0^{-1}(\lambda,\kappa,S,T)$.
 Let $\delta_0$ denote the least Woodin cardinal of $P_0$ above $\bar{\kappa}$.
 
 Let $W$ denote the set of all (reals coding) ordered pairs $(\mathcal{T},b)$ such that
 \begin{itemize}
  \item $\mathcal{T}$ is a $2^\omega$-closed iteration tree on $P_0$ of length $\omega$ (meaning that each extender produces a $2^{\omega}$-closed ultrapower when applied to the model from which it is chosen),
  \item $\mathcal{T}$ is above $\bar{\kappa}$ and based on $\delta_0$ (meaning that each extender chosen has critical point above $\bar{\kappa}$ and von Neumann rank below the corresponding image of $\delta_{0}$),
  \item $b$ is a cofinal branch of $\mathcal{T}$, and
  \item the model $\mathcal{M}_b^{\pi_0\mathcal{T}}$ is wellfounded, where $\pi_0\mathcal{T}$ is the lifted tree on $V_{\gamma}$.
 \end{itemize}
  
As shown by Martin and Steel \cite[p. 27]{MarSteIterationTrees}, the condition that the model $\mathcal{M}_b^{\pi_0\mathcal{T}}$ is wellfounded implies that the branch $b$ is $\pi_0$-realizable (meaning that the branch embedding $i^\mathcal{T}_b$ factors into $\pi_0$), which in turn implies that the model $\mathcal{M}_b^{\mathcal{T}}$ is wellfounded.
  
  By a  result of K.~Windszus (see Steel \cite[Lemma 1.1]{SteSTFree}) the set $W$ is $\kappa$-homogeneously Suslin, and therefore $\mathord{<}\lambda$-homogeneously Suslin by our choice of $\kappa$. (We added the condition that $\mathcal{T}$ is based on $\delta_0$, which is harmless and convenient.) Note that essentially all of the complexity of $W$ comes from condition that $\mathcal{M}_{b}^{\pi_0\mathcal{T}}$ is wellfounded; the other conditions are arithmetic.
  
  Next we show (following Steel \cite[Claim 2 on p.~7]{SteSTFree}) that
 the following statements are equivalent, for all $x \in \bbR$:
  \begin{enumerate}
   \item\label{item:x-in-A} $x \in A$.
   \item\label{item:exists-T-in-W}

    There exist a pair $(\mathcal{T},b) \in W$
    and an $\mathcal{M}^\mathcal{T}_b$-generic filter \[g \subseteq \Col(\omega,i^\mathcal{T}_b(\delta_0))\]
    such that 
   $x \in \mathcal{M}^\mathcal{T}_b[g]$ and $\psi\big[\p[i^\mathcal{T}_b(\bar{S})],x\big]$ holds in
   the symmetric extension of $\mathcal{M}^\mathcal{T}_b[g]$ at its limit $i^\mathcal{T}_b(\bar{\lambda})$ of Woodin cardinals. (It doesn't matter which symmetric extension, by the homogeneity of the Levy collapse forcing.)
 \end{enumerate}

The two directions of the equivalence of \eqref{item:x-in-A} and \eqref{item:exists-T-in-W} can be proved by constructing a suitable $\pi_{0}$-realizable $\mathbb{R}$-genericity iteration of $P_{0}$ at $\lambda$. We prove the forward direction first and then note the changes needed for the reverse direction.  

Fix $x \in \mathbb{R}$ (at the end of the proof we will assume in addition that $x$ is in $A$), and note that because $\delta_0$ is a Woodin cardinal of $P_0$,
  by Theorem 7.16 of Neeman \cite{NeeHandbook} there is a $2^\omega$-closed iteration tree
  $\mathcal{T}$ on $P_0$ that is above $\bar{\kappa}$, based on $\delta_0$, of length $\omega$,
  such that for every cofinal wellfounded branch $b$ of $\mathcal{T}$ there is a $\mathcal{M}^\mathcal{T}_b$-generic filter $g \subseteq \Col(\omega,i^\mathcal{T}_b(\delta_0))$ such that 
  $x \in \mathcal{M}^\mathcal{T}_b[g]$.
  By Martin and Steel \cite[Corollary 5.7]{MarSteIterationTrees},
  the $2^\omega$-closed iteration tree $\pi_0\mathcal{T}$ on $V_{\gamma}$
  has a cofinal wellfounded branch $b$.
  For such a branch $b$, the pair $(\mathcal{T},b)$ is in $W$,
  and there is a $\mathcal{M}^\mathcal{T}_b$-generic filter $g \subseteq \Col(\omega,i^\mathcal{T}_b(\delta_0))$ such that 
  $x \in \mathcal{M}^\mathcal{T}_b[g]$.

  Now let $x_0 = x$ and let $\mathcal{T}_0$, $b_0$, and $g_0$ be such that $(\mathcal{T}_0,b_0) \in W$, 
  $g_0 \subseteq \Col(\omega,i^\mathcal{T}_b(\delta_0))$ is an $\mathcal{M}^{\mathcal{T}_0}_{b_0}$-generic filter, and
  $x \in \mathcal{M}^{\mathcal{T}_0}_{b_0}[g_0]$. (The previous paragraph shows that such $\mathcal{T}_0$, $b_0$, and $g_0$ exist.)
  Let $P_1 = \mathcal{M}^{\mathcal{T}_0}_{b_0}$ and let $i_{0,1} = i^{\mathcal{T}_{0}}_{b_{0}}$ be the branch embedding from $P_{0}$ to $P_{1}$. 
  As noted above, the condition that $\mathcal{M}_{b_0}^{\pi_0\mathcal{T}_0}$ is wellfounded
  implies that the branch $b_0$ is $\pi_0$-realizable, meaning that there is an elementary embedding $\pi_1 : P_1 \to V_{\gamma}$ such that $\pi_1 \circ i_{0,1} = \pi_0$.

  Now with the model $P_1$ in place of the model $P_0$, the map $\pi_1$ in place of the map $\pi_0$,
  a Woodin cardinal $\delta_1$ of $P_1$ such that $i_{0,1}(\delta_0) < \delta_1 < i_{0,1}(\bar{\lambda})$ in place of $\delta_0$, and an arbitrary real $x_1$, we can repeat the process above, applying the Neeman and Martin--Steel theorems again
  to get an iteration tree $\mathcal{T}_1$ on $P_1$ of length $\omega$,
  above $i_{0,1}(\delta_0)$ and based on $\delta_1$, a $\pi_1$-realizable branch
  $b_1$ of $\mathcal{T}_1$, and an $\mathcal{M}^{\mathcal{T}_1}_{b_1}$-generic filter $g_1 \subseteq \Col(\omega, i^{\mathcal{T}_1}_{b_1}(\delta_1))$ such that 
  $x_1 \in \mathcal{M}^{\mathcal{T}_1}_{b_1}[g_1]$.
  
We can repeat this process for $\omega$ many stages.
(Note that, for the purposes of showing the equivalence of (1) and (2), at a stage $j>0$, it suffices to use a $\pi_j$-realizable branch of $\mathcal{T}_j$ given by Martin and Steel \cite[Theorem 3.12]{MarSteIterationTrees}; the requirement that $\mathcal{M}_{b_j}^{\pi_j\mathcal{T}_j}$ is wellfounded was only necessary for stage $j=0$ to establish that $(\mathcal{T}_{0}, b_{0})\in W$.)
  Furthermore, given a generic enumeration $\{x_j : j < \omega\}$ of $\mathbb{R}^V$ in $V^{\Col(\omega,\mathbb{R})}$, we do this in such a way to produce a 
  $\pi_0$-realizable $\mathbb{R}$-genericity iteration $P_0 \to P_1 \to \cdots P_\omega$.
  With suitable bookkeeping, one can do this in such a way that the Woodin cardinals $i_{j,\omega}(\delta_j)$ for $j<\omega$ are cofinal in $i_{0,\omega}(\bar{\lambda})$, and the generic filters
  $g_j$ for $j<\omega$ fit together into a $P_\omega$-generic
  filter $g \subseteq \Col(\omega,\mathord{<}i_{0,\omega}(\bar{\lambda}))$ (as in Remark \ref{gisomrem}) such that $\mathbb{R}^V = \bigcup\{\mathbb{R} \cap P_{\omega}[g \restrict \xi] : \xi < i_{0\omega}(\bar{\lambda})\}$.
   
To see that \eqref{item:x-in-A} implies \eqref{item:exists-T-in-W},
   it remains to observe that the condition $x \in A$ is equivalent to the condition $P_\omega(\mathbb{R}^V) \models \psi\big[i_{0,\omega}(\bar{S}),x\big]$ by the hypothesis of the lemma and the fact that $C = \p[i_{0,\omega}(\bar{S})]^{P_\omega(\mathbb{R}^V)}$, as in Remark \ref{treemovecorrem}. 
   This in turn is equivalent to the condition that $\psi\big[\p[i_{0,1}(\bar{S})],x\big]$ holds in the symmetric extension of $P_1[g_0]$ at its limit $i_{0,1}(\bar{\lambda})$ of Woodin cardinals (i.e., that the first step of our iteration witnessed \eqref{item:exists-T-in-W}) by the fact that the iteration map $i_{1,\omega} : P_1 \to P_\omega$ extends to an elementary embedding $i^*_{1,\omega} : P_1[g_0] \to P_\omega[g_0]$.
   
The proof that \eqref{item:exists-T-in-W} implies \eqref{item:x-in-A} is the same, except that the first step of the genericity iteration is given by assuming \eqref{item:exists-T-in-W}, and the two equivalences in the previous paragraph are applied in reverse. 
   
  This characterization of the set $A$ given by the equivalence of \eqref{item:x-in-A} and \eqref{item:exists-T-in-W} shows that $A$ is a projection of a $\mathord{<}\lambda$-homogeneously Suslin set (essentially all of whose complexity comes from $W$), i.e., that $A$ is $\mathord{<}\lambda$-weakly homogeneously Suslin. It follows that $\lambda$ is $\mathord{<}\lambda$-homogeneously Suslin by the main theorem from the proof of projective determinacy (Martin and Steel \cite[Theorem~5.11]{MarSteProjDet}; see also Theorem 3.3.13 of \cite{LarStationaryTower}).
\end{proof}

\color{black}


%

\section{Absoluteness of the $F$ predicate}\label{sec-F-absoluteness}

Although our main theorem is concerned only with the predicate $F$ as it is defined in the symmetric model $V(\mathbb{R}^*_G)$, the proof of $\Sigma^2_1$ reflection will need to consider the predicate as it is defined in other models (in particular $V$ and $\mathbb{R}$-genericity iterates of countable hulls of rank initial segments of $V$) and show that the definition satisfies a certain absoluteness property between these models (as in Lemma \ref{lem-DM-of-P-omega} below).

First we will need a lemma that shows that, in the presence of Woodin cardinals, the universally Baire sets added generically over a countable model that embeds into $V$ are closely related to universally Baire sets in $V$. 


In the presence of Woodin cardinals, the pointclass of (sufficiently) homogeneously Suslin sets is closed under complementation in a strong sense. That is, if $\delta$ is a Woodin cardinal, $Y$ is a set of $\delta^+$-complete measures with $|Y| < \delta$, and $\kappa < \delta$ is a cardinal, then by Steel \cite[Lemma~2.1]{SteSTFree} there is a ``tower-flipping'' function $f$ that associates to every finite tower $\langle\rho_0,\ldots,\rho_{n-1}\rangle$ of measures from $Y$ a length-$n$ tower $f(\langle\rho_0,\ldots,\rho_{n-1}\rangle)$ of $\kappa$-complete measures with the following properties. First, $f$ respects extensions of finite towers, so that to every infinite tower $\vec{\rho}$ of measures from $Y$ it continuously associates an infinite tower $\bigcup_{n<\omega} f(\vec{\rho} \restrict n)$ of $\kappa$-complete measures. Second, this associated tower $\bigcup_{n<\omega} f(\vec{\rho} \restrict n)$ is well-founded if and only if the given tower $\vec{\rho}$ is ill-founded.  Third, this property of the tower-flipping function continues to 
hold in all generic extensions by posets of size less than $\kappa$.

The following lemma shows that in the presence of a Woodin cardinal, the Martin--Solovay construction can be used to produce absolutely complementing trees of a nice form, which we will use in Lemma \ref{lem-containment-and-absoluteness} to prove an absoluteness result for the $F$ predicate. A similar idea appears in the proof of Steel \cite[Theorem~2.2, Subclaim~1.1]{SteSTFree}.

 For a tree $T$ and a natural number $n$ we let $T \mathord{\restrict} n$ be the subset of $T$ consisting of all nodes of length less than $n$.

\begin{lem}\label{lem-map-ub-sets-up-no-strong}
 Let $P$ be a countable transitive set, let $\gamma$ be a limit ordinal and let $\pi \colon P \to V_{\gamma}$ be an elementary embedding. Let $\bar{\alpha}$ be a cardinal of $P$ and suppose that $\bar{\delta}$ is a Woodin cardinal of $P$ with $\bar{\alpha} < \bar{\delta}$. Let $g \subseteq \Col(\omega,\bar{\alpha})$ be a $P$-generic filter in $V$ and let $A$ be a $\bar{\delta}^+$-homogeneously Suslin set of reals in $P[g]$.

 Then for each cardinal $\bar{\kappa}$ of $P$ with $\bar{\alpha} < \bar{\kappa} < \bar{\delta}$ there is a $\bar{\kappa}$-absolutely complementing pair of trees $(\bar{S},\bar{T}) \in P[g]$ for $A$ such that, for every $n < \omega$, the restrictions $\bar{S}\restrict n$ and $\bar{T} \restrict n$ to finite levels are in $P$, and, letting $S = \bigcup_{n<\omega} \pi(\bar{S}\restrict n)$ and $T = \bigcup_{n<\omega} \pi(\bar{T}\restrict n)$, the pair $(S,T) \in V_{\gamma}$ is $\pi(\bar{\kappa})$-absolutely complementing.
\end{lem}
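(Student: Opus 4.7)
The approach is to extract from the $\bar\delta^+$-homogeneous Suslinness of $A$ in $P[g]$ a homogeneity system that essentially lives in $P$, and then build both $\bar S$ and the Martin--Solovay tree $\bar T$ out of this data, so that the $\pi$-images match by elementarity.

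First, fix a $\bar\delta^+$-complete homogeneity system $\vec\mu = \langle \mu_s : s \in \omega^{<\omega}\rangle \in P[g]$ witnessing that $A$ is $\bar\delta^+$-homogeneously Suslin in $P[g]$. Because $|\Col(\omega,\bar\alpha)|^P = \bar\alpha < \bar\delta^+$, the Levy--Solovay theorem yields, for each $s$, a unique $\bar\delta^+$-complete measure $\mu_s^P \in P$ whose canonical extension to $P[g]$ is $\mu_s$, and the $\mu_s$-ultrapower of $P[g]$ agrees on ordinals with the $\mu_s^P$-ultrapower of $P$. Thus $\vec\mu^P = \langle\mu_s^P\rangle$ is a tree of $\bar\delta^+$-complete measures in $P$, and the Martin--Solovay construction produces the same tree whether applied to $\vec\mu^P$ in $P$ or to $\vec\mu$ in $P[g]$.

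Working in $P$, let $\bar S$ be a canonical tree carrying $\vec\mu^P$ (so that each $\mu_s^P$ concentrates on the section $\bar S \cap \bar\gamma^{|s|}$ and $\p[\bar S] = \mathbf{S}_{\vec\mu^P}$ in any outer model to which $\vec\mu^P$ extends), and let $\bar T = \mathrm{ms}(\vec\mu^P)$. Both trees lie in $P$, so their finite restrictions do as well. In $P[g]$, Levy--Solovay absoluteness gives $\p[\bar S]^{P[g]} = \mathbf{S}_{\vec\mu}^{P[g]} = A$, while the continuity of the Martin--Solovay construction together with the $\bar\delta^+$-completeness of $\vec\mu$ gives $\p[\bar T]^{P[g]} = \omega^\omega \setminus A$, with this complementarity preserved under any forcing of size less than $\bar\delta^+$, and in particular of size less than $\bar\kappa$. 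Thus $(\bar S,\bar T)$ is $\bar\kappa$-absolutely complementing in $P[g]$.

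Finally, $S = \bigcup_{n<\omega}\pi(\bar S \restrict n)$ equals $\pi(\bar S)$, and similarly $T = \pi(\bar T)$, since $\pi$ commutes with restriction to finite levels. By elementarity, $\pi(\vec\mu^P)$ is a $\pi(\bar\delta^+)$-complete homogeneity system for $S$ in $V_\gamma$, and $T = \mathrm{ms}(\pi(\vec\mu^P))$; hence $(S,T)$ is $\pi(\bar\delta^+)$-absolutely complementing in $V_\gamma$, and in particular $\pi(\bar\kappa)$-absolutely complementing. The main technical obstacle is locating the canonical tree $\bar S$ inside $P$ with the correct projection in $P[g]$; this is precisely what the Levy--Solovay identification of $\vec\mu$ with $\vec\mu^P$ and the corresponding agreement of ultrapowers on ordinals make possible, after which the $\pi$-transfer is routine.
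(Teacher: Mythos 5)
Your proposal has a genuine gap at its first step, and the gap is precisely what the paper's tower-flipping machinery is designed to work around.

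You claim that since each $\mu_s$ corresponds by Levy--Solovay to a unique $\bar\delta^+$-complete $P$-measure $\mu_s^P$, the sequence $\vec\mu^P = \langle\mu_s^P : s \in \omega^{<\omega}\rangle$ lies in $P$. This does not follow. Each coordinate $\mu_s^P$ is an element of $P$, but the \emph{sequence} is defined from $\vec\mu \in P[g]$ and may be a new $\omega^{<\omega}$-indexed function added by the forcing; for instance, if $\bar\alpha$ is uncountable in $P$, the collapse $\Col(\omega,\bar\alpha)$ adds new countable sequences from any set of size $\bar\alpha$ in $P$, and there is nothing preventing $s \mapsto \mu_s^P$ from being such a sequence. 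Once $\vec\mu^P \in P$ fails, your $\bar S$ (the canonical tree carrying $\vec\mu^P$) and $\bar T = \ms(\vec\mu^P)$ need not lie in $P$ either, which voids the identification $T = \pi(\bar T)$ and the subsequent elementarity transfer. The gap is not cosmetic: the homogeneity tree $\bar S$ at level $n$ depends on $\mu_s$ for \emph{all} $s$ of length $n$ (countably infinitely many measures), so even the weaker claim that $\bar S \restrict n \in P$ does not survive when $\vec\mu^P \notin P$.

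The paper never asserts $\vec\mu^P \in P$. Instead it fixes a set $Y \in P$ of $\bar\delta^+$-complete measures of size $\bar\alpha$ containing all candidates, applies a tower-flipping function $f \in P$ (Steel's Lemma 2.1, this is where Woodinness of $\bar\delta$ and the parameter $\bar\kappa$ actually enter), and takes \emph{both} trees to be Martin--Solovay trees: $\ms(\vec\mu)$ and $\ms(\vec\nu)$, where $\vec\nu$ is the continuously flipped $\bar\kappa$-complete system. The point is that the first $n$ levels of a Martin--Solovay tree depend on only \emph{finitely many} measures, each individually in $P$; a finite tuple of $P$-measures is in $P$, so $\bar S \restrict n, \bar T \restrict n \in P$ even though $\bar S, \bar T$ themselves may fail to be. This is why the lemma is phrased in terms of $S = \bigcup_n \pi(\bar S \restrict n)$ rather than $\pi(\bar S)$. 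A useful sanity check: your argument never invokes the Woodinness of $\bar\delta$ and never uses $\bar\kappa$, both of which appear in the hypothesis; that is a signal that something essential is being skipped.
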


\begin{proof}
 Let $A = \textbf{S}_{\vec{\mu}}$ where $\vec{\mu}$ is a tree of $\bar{\delta}^+$-complete measures in $P[g]$.  By the Levy--Solovay theorem each measure $\mu_s$ is induced by the $\bar{\delta}^+$-complete measure $\mu_s \cap P \in P$, which we will also denote by $\mu_s$. In $P$ there is a set $Y$ consisting of $\bar{\delta}^+$-complete measures such that $|Y|^{P} = \bar{\alpha}$  every member  of $\vec{\mu}$ is induced by a member of $Y$.
 

 Given a cardinal $\bar{\kappa}$ of $P$ with $\bar{\alpha} < \bar{\kappa} < \bar{\delta}$ (as in the statement of the lemma), let $f \in P$ be a tower-flipping function that continuously associates to every tower of measures from $Y$ a tower of $\bar{\kappa}$-complete measures. In $P[g]$, define the tree of $\bar{\kappa}$-complete measures $\vec{\nu} = \langle\nu_s : s \in \omega^{\mathord{<}\omega}\rangle$ that is continuously associated to $\vec{\mu}$ by $f$ in the sense that $\langle\nu_{s \restrict n} : n \le \left|s\right|\rangle = f(\langle\mu_{s \restrict n} :n \le \left|s\right|\rangle)$ for every finite sequence $s \in \omega^{\mathord{<}\omega}$. So in $P[g]$ we have $\textbf{S}_{\vec{\nu}} = \omega^\omega \setminus\textbf{S}_{\vec{\mu}}$ by the tower-flipping property of $f$. Moreover, $\pi(f)$ is a tower-flipping function in $V_{\gamma}$ by the elementarity of $\pi$, so we have $\textbf{S}_{\pi``\vec{\nu}} = \omega^\omega \setminus\textbf{S}_{\pi``\vec{\mu}}$ in $V_{\gamma}$.

 
 In $P[g]$ define the Martin--Solovay trees $\bar{S} = \ms(\vec{\mu})$ and $\bar{T} = \ms(\vec{\nu})$, and in $V_{\gamma}$ define the Martin--Solovay trees $S = \ms(\pi``\vec{\mu})$ and $T = \ms(\pi``\vec{\nu})$. Because for each $n \in \omega$ the first $n$ levels of the Martin--Solovay tree are determined by finitely many measures (in the ground model, by the absoluteness of the construction of the Martin-Solovay tree between $P$ and $P[g]$ for $\bar{\delta}^{+}$-complete measures) we have $(\bar{S} \restrict n,\bar{T} \restrict n) \in P$ for all $n<\omega$. For the same reason, we have $S = \bigcup_{n<\omega} \pi(\bar{S}\restrict n)$ and $T = \bigcup_{n<\omega} \pi(\bar{T}\restrict n)$.

In the model $P[g]$ the pair $(\bar{S},\bar{T})$ is $\bar{\kappa}$-absolutely complementing: this follows from the fact that $\p[\bar{S}] = \omega^\omega \setminus \p[\bar{T}]$ (in $P[g]$) and the fact that $\bar{S}$ and $\bar{T}$, being Martin--Solovay trees, are each $\bar{\kappa}$-absolutely complemented.
 A similar argument shows that in $V$ the pair $(S,T)$ is $\pi(\bar{\kappa})$-absolutely complementing, as desired.
\end{proof}

Using a strong cardinal, we can strengthen the previous lemma to give any desired degree of absolute complementation.

\begin{lem}\label{lem-map-ub-sets-up}
 Let $P$ be a countable transitive set, let $\gamma$ be a limit ordinal and let $\pi \colon P \to V_{\gamma}$ be an elementary embedding. Let $\bar{\alpha}$ be a cardinal of $P$ and let $\bar{\delta}$ be a Woodin cardinal of $P$ with $\bar{\alpha} < \bar{\delta}$. Let $g \subseteq \Col(\omega,\bar{\alpha})$ be a $P$-generic filter in $V$ and let $A$ be a $\bar{\delta}^+$-homogeneously Suslin set of reals in $P[g]$.

 Let $\bar{\kappa}$ and $\bar{\eta}$ be cardinals of $P$ with $\bar{\alpha} < \bar{\kappa} < \bar{\delta} < \bar{\eta}$ such that $\bar{\kappa}$ is $(\bar{\eta}+1)$-strong in $P$. Then there is an $\bar{\eta}$-absolutely complementing pair of trees $(\bar{S},\bar{T}) \in P[g]$ for $A$ such that for every $n < \omega$ the restrictions $\bar{S}\restrict n$ and $\bar{T} \restrict n$ to finite levels are in $P$, and, letting $S = \bigcup_{n<\omega} \pi(\bar{S}\restrict n)$ and $T = \bigcup_{n<\omega} \pi(\bar{T}\restrict n)$, the pair $(S,T) \in V_{\gamma}$ is $\pi(\bar{\eta})$-absolutely complementing.
\end{lem}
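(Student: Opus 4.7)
The plan is to reduce to the previous lemma inside an internal ultrapower of $P$ that contains a Woodin cardinal above $\bar\eta$, leveraging the $(\bar\eta+1)$-strongness of $\bar\kappa$. Fix a short $(\bar\eta+1)$-strong extender $E \in P$ with critical point $\bar\kappa$, and let $j_E\colon P \to N := \Ult(P,E)$ be the induced ultrapower embedding, satisfying $j_E(\bar\kappa) > \bar\eta$ and $V_{\bar\eta+1}^P \subseteq N$. Since $E \in P$, the class $N$ is a transitive subclass of $P$ definable from $E$, so $N[g]$ is in turn definable inside $P[g]$. Because $\Col(\omega,\bar\alpha)$ and all its antichains lie in $V_{\bar\kappa}^P \subseteq N$, the filter $g$ is also $N$-generic and $j_E$ lifts to $j_E^+\colon P[g] \to N[g]$.

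Working inside $N$, the cardinal $j_E(\bar\delta) > \bar\eta$ is Woodin, $j_E(Y)$ is a set of $j_E(\bar\delta)^+$-complete measures with $|j_E(Y)|^N = \bar\alpha$, and the L\'evy--Solovay extensions in $N[g]$ of the measures $j_E(\mu_s \cap P)$ assemble into a tree $\vec{\tilde\mu} \in N[g]$ with $\textbf{S}_{\vec{\tilde\mu}}^{N[g]} = A$, by elementarity of $j_E^+$ applied to $\textbf{S}_{\vec\mu}^{P[g]} = A$ together with absoluteness of wellfoundedness of countable ultrapowers. Apply Steel's tower-flipping lemma inside $N$ at the Woodin cardinal $j_E(\bar\delta)$ with target cardinal $\bar\eta$ to obtain a tower-flipping function $f^* \in N$; let $\vec\nu^* \in N[g]$ be the tree of $\bar\eta$-complete measures continuously associated to $\vec{\tilde\mu}$ by $f^*$, so that $\textbf{S}_{\vec\nu^*}^{N[g]} = \omega^\omega \setminus A$. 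Setting $\bar S := \ms(\vec{\tilde\mu})^{N[g]}$ and $\bar T := \ms(\vec\nu^*)^{N[g]}$, the Martin--Solovay argument from the proof of Lemma \ref{lem-map-ub-sets-up-no-strong} shows that $(\bar S,\bar T)$ is $\bar\eta$-absolutely complementing in $N[g]$.

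To finish, I verify the required conclusions. Both $\bar S$ and $\bar T$ lie in $P[g]$ because $N[g] \subseteq P[g]$. By L\'evy--Solovay absoluteness of the Martin--Solovay construction, $\bar S \restrict n = j_E(\tau_n^S)$ for some $\tau_n^S \in P$ built from finitely many measures $\mu_{s_i} \cap P \in Y$, and $j_E(\tau_n^S) \in N \subseteq P$, so $\bar S \restrict n \in P$; the analogous argument yields $\bar T \restrict n \in P$, using that $f^*$ is definable from $E$ in $P$. For $\bar\eta$-absolute complementation in $P[g]$, any poset of size $<\bar\eta$ in $P[g]$ can be realized on a subset of $\bar\eta$ and so lies in $V_{\bar\eta+1}^{P[g]} \subseteq N[g]$ (since its names can be taken in $V_{\bar\eta+1}^P \subseteq N$), making any generic $h$ also $N[g]$-generic; then $\omega^\omega \cap P[g][h] = \omega^\omega \cap N[g][h]$ transfers the partition from $N[g][h]$ to $P[g][h]$. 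For the $\pi$-image statement, elementarity of $\pi$ gives that $\pi(E) \in V_\gamma$ is a $(\pi(\bar\eta)+1)$-strong extender with critical point $\pi(\bar\kappa)$ and $\pi(\bar S \restrict n) = j_{\pi(E)}(\pi(\tau_n^S))$, so $S = \bigcup_n \pi(\bar S \restrict n)$ is the analog Martin--Solovay tree inside $\Ult(V_\gamma,\pi(E))$ (and similarly for $T$), whence $(S,T)$ is $\pi(\bar\eta)$-absolutely complementing in $V_\gamma$ by the analogous argument.

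The main obstacle throughout is ensuring that $\bar S$ and $\bar T$, together with their finite-level restrictions, actually live in $P[g]$ and $P$ respectively, rather than only in the larger models $N[g]$ and $N$. This is resolved by insisting that $E \in P$, which is exactly what the internal $(\bar\eta+1)$-strongness of $\bar\kappa$ guarantees: with $E$ in $P$, the ultrapower $N$, the flipping function $f^*$, and the resulting Martin--Solovay trees are all uniformly definable from $P$-objects, so their finite-level truncations, being $j_E$-images of $P$-sets, descend into $P$.
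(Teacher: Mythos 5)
Your proof is correct, but it takes a genuinely different route from the paper's. The paper's proof is a two-liner that leans entirely on Lemma \ref{lem-map-ub-sets-up-no-strong}: it first invokes that lemma to get a $\bar\kappa$-absolutely complementing pair $(\bar S_0,\bar T_0)$ in $P[g]$ with the desired finite-level structure (and the matching lifted pair $(S_0,T_0)$ in $V_\gamma$), and then simply sets $\bar S = j_{\bar E}(\bar S_0)$, $\bar T = j_{\bar E}(\bar T_0)$, $S=j_{\pi(\bar E)}(S_0)$, $T=j_{\pi(\bar E)}(T_0)$; the conclusions follow from elementarity of $j_{\bar E}$, $j_{\pi(\bar E)}$, the commutativity $\pi\circ j_{\bar E}=j_{\pi(\bar E)}\circ\pi$, and the downward transfer of absolute complementation from $N[g]$ to $P[g]$ via $V_{\bar\eta+1}^P\subseteq N$. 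You instead form $N=\Ult(P,E)$ \emph{first}, then re-run the whole Martin--Solovay / tower-flipping machinery of Lemma \ref{lem-map-ub-sets-up-no-strong} from scratch \emph{inside} $N$, at the image Woodin cardinal $j_E(\bar\delta)>\bar\eta$, targeting $\bar\eta$-completeness directly. Both are valid. The paper's decomposition is more modular and shorter, since it uses the previous lemma as a black box and adds just one ultrapower application. Your version is more self-contained (you never need the pair $(\bar S_0,\bar T_0)$ at all), and it produces trees that are literally Martin--Solovay trees for an $\bar\eta$-complete system rather than extender images of Martin--Solovay trees for a $\bar\kappa$-complete system --- a cosmetic difference, since the output pair need not agree with the paper's, but both satisfy the stated conclusion. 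One small remark: it is worth making explicit, as the paper also only gestures at, that the $\pi$-side claim requires arguing that each $\pi(\bar S\restriction n)$ and $\pi(\bar T\restriction n)$ is the $n$-th level of the Martin--Solovay construction carried out inside $\Ult(V_\gamma,\pi(E))$ from the $\pi$-images of the measure system and the flipping function; this is a clean elementarity argument (the finite levels, being independent of $g$, are computed entirely in $P$ and hence carried correctly by $\pi$), but your closing ``by the analogous argument'' compresses this step considerably.
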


\begin{proof}
 Let $(\kappa, \eta) = \pi(\bar{\kappa}, \bar{\eta})$. By Lemma \ref{lem-map-ub-sets-up-no-strong} there is a $\bar{\kappa}$-absolutely complementing pair of trees $(\bar{S}_0, \bar{T}_0)$ in $P[g]$ for $A$ such that for every $n < \omega$ the restrictions $\bar{S}_0\restrict n$ and $\bar{T}_0 \restrict n$ to finite levels are in $P$, and, letting $S_0 = \bigcup_{n<\omega} \pi(\bar{S}_0\restrict n)$ and $T_0 = \bigcup_{n<\omega} \pi(\bar{T}_0\restrict n)$, the pair $(S_0,T_0) \in V_{\gamma}$ is $\kappa$-absolutely complementing.
  
 In $P$, let $\bar{E}$ be an extender witnessing that $\bar{\kappa}$ is $(\bar{\eta}+1)$-strong. The ultrapower map $j_{\bar{E}} \colon P \to \Ult(P, \bar{E})$ extends canonically to a map $P[g] \to \Ult(P[g], \bar{E})$, which we will also denote by $j_{\bar{E}}$. In $V_{\gamma}$ the extender $E = \pi(\bar{E})$ witnesses that $\kappa$ is $(\eta+1)$-strong, and we have an ultrapower map $j_E : V_{\gamma} \to \Ult(V_{\gamma}, E)$ with $j_{E}(\kappa) > \eta$ and $V_{\eta + 1} \subseteq \Ult(V_{\gamma}, E)$. 

 Letting $\bar{S} = j_{\bar{E}}(\bar{S}_0)$ and $\bar{T} = j_{\bar{E}}(\bar{T}_0)$, the pair of trees $(\bar{S}, \bar{T})$ is an $\bar{\eta}$-absolutely complementing pair for $A$ in $P[g]$. Similarly, letting $S = j_E(S_0)$ and $T = j_E(T_0)$, the pair of trees $(S, T) \in V_{\gamma}$ is $\eta$-absolutely complementing. Because $\pi \circ j_{\bar{E}} = j_E \circ \pi$, we have $S = \bigcup_{n<\omega} \pi(\bar{S}\restrict n)$ and $T = \bigcup_{n<\omega} \pi(\bar{T}\restrict n)$ as desired.
\end{proof}

Applying Lemma \ref{lem-map-ub-sets-up} to $\mathbb{R}$-genericity iterations yields the following result.
Part (\ref{item-containment}) below can also be derived as an immediate consequence of
Steel \cite[Subclaim~1.1]{SteSTFree}, which applies more generally to any limit $\lambda$ of Woodin cardinals.  However, in our situation we can use the fact that $\lambda$ is also a limit of strong cardinals to give a proof of part (\ref{item-containment}) using Lemma \ref{lem-map-ub-sets-up} instead.  Part (\ref{item-absoluteness}) then follows from a similar argument.



\begin{lem}\label{lem-containment-and-absoluteness}
 Let  $\lambda$ be a limit of Woodin cardinals and a limit of strong cardinals and let $\gamma$ be a limit ordinal of cofinality greater than $\lambda$. 
Let $P_{0}$ be a countable transitive set and let $\pi_0 \colon P_0 \to V_{\gamma}$ be an elementary embedding with $\lambda \in \ran(\pi_0)$. 
Let $P_\omega$ be obtained from $P_0$ by a $\pi_0$-realizable $\mathbb{R}$-genericity iteration at $\lambda$ and let $A$ be a set of reals in $\uB^{P_\omega(\mathbb{R}^V)}$. Then the following hold. 
\begin{enumerate}
 \item \label{item-containment}
  $A \in \uB^V$
 \item \label{item-absoluteness}
  For every set $Z \in P_\omega(\mathbb{R}^V) \cap V$, every condition $p \in \Col(\omega,Z)$, and every $\Col(\omega,Z)$-name $\dot{x} \in P_\omega(\mathbb{R}^V) \cap V$ for a real, the statement ``$p$ forces $\dot{x}$ to be in the canonical expansion of $A$'' is absolute between $P_\omega(\mathbb{R}^V)$ and $V$.
\end{enumerate}
\end{lem}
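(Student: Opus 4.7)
The plan is to use Lemma \ref{lem-map-ub-sets-up} as the central tool, exploiting the assumption that $\lambda$ is both a limit of Woodin cardinals and a limit of strong cardinals in $V$; by elementarity of $\pi_\omega$ (which sends $i_{0\omega}(\bar{\lambda})$ to $\lambda$) the same holds for $i_{0\omega}(\bar{\lambda})$ in $P_\omega$.

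For part (\ref{item-containment}), I would begin with a pair of trees $(\bar{S},\bar{T}) \in P_\omega(\mathbb{R}^V)$ witnessing $A \in \uB^{P_\omega(\mathbb{R}^V)}$. By homogeneity of the Levy collapse on $P_\omega$, such a pair may be taken to lie in a small extension $P_\omega[g \restrict \xi]$ for some $\xi < i_{0\omega}(\bar{\lambda})$, and standard absoluteness-of-wellfoundedness considerations show that $(\bar{S},\bar{T})$ is $\Col(\omega,\eta)$-absolutely complementing there for every $\eta < i_{0\omega}(\bar{\lambda})$. Since $i_{0\omega}(\bar{\lambda})$ is a limit of Woodin cardinals of $P_\omega$, Remark \ref{homubequivrem} then implies $A$ is $\bar{\delta}^+$-homogeneously Suslin in $P_\omega[g \restrict \xi]$ for arbitrarily large Woodins $\bar{\delta}$ of $P_\omega$ above $\xi$. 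By elementarity $i_{0\omega}(\bar{\lambda})$ is also a limit of strong cardinals of $P_\omega$, so a strong cardinal $\bar{\kappa}$ with $\xi < \bar{\kappa} < \bar{\delta}$ exists. Applying Lemma \ref{lem-map-ub-sets-up} with arbitrary $\bar{\eta} < i_{0\omega}(\bar{\lambda})$ yields a pair $(S,T) \in V_\gamma$ that is $\pi_\omega(\bar{\eta})$-absolutely complementing in $V$ and projects to $A$. Varying $\bar{\eta}$ cofinally in $i_{0\omega}(\bar{\lambda})$ makes $\pi_\omega(\bar{\eta})$ cofinal in $\lambda$, so $A$ is $\mathord{<}\lambda$-universally Baire in $V$. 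The amalgamation lemma at the end of Section \ref{sec-Suslin-uB} (applied in $V$, where $\mathsf{AC}$ holds) upgrades this to $\lambda$-universal Baireness, and then the ultrapower argument via strong cardinals below $\lambda$ in $V$, as in the proof of part (\ref{item-every-set-ub}) of the main theorem, gives full universal Baireness of $A$ in $V$.

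For part (\ref{item-absoluteness}), with $A \in \uB^V$ in hand, fix any $\Col(\omega, Z)$-absolutely complementing pair $(S, T) \in V$ for $A$ and any such pair $(\bar{S}, \bar{T}) \in P_\omega(\mathbb{R}^V)$ for $A$. Since $\mathbb{R}^{P_\omega(\mathbb{R}^V)} = \mathbb{R}^V$, absoluteness of projections gives $\p[S] = \p[\bar{S}] = A$ and $\p[T] = \p[\bar{T}] = \omega^\omega \setminus A$ as computed in $V$, so applying Lemma \ref{lem-projection-inclusion} in $V$ to the $\Col(\omega, Z)$-absolutely complementing pair $(S, T)$ yields $1_{\Col(\omega, Z)} \forces_V \p[\bar{S}] \subseteq \p[S]$ and $1_{\Col(\omega, Z)} \forces_V \p[\bar{T}] \subseteq \p[T]$. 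Suppose for contradiction that $V \models p \forces \dot{x} \in \p[S]$ while $P_\omega(\mathbb{R}^V) \not\models p \forces \dot{x} \in \p[\bar{S}]$; then some $q \le p$ forces, over $P_\omega(\mathbb{R}^V)$, that $\dot{x} \in \p[\bar{T}]$, and taking a $V$-generic $H \ni q$ (necessarily also $P_\omega(\mathbb{R}^V)$-generic since $P_\omega(\mathbb{R}^V) \subseteq V$) gives $\dot{x}^H \in \p[\bar{T}]^{P_\omega(\mathbb{R}^V)[H]} \subseteq \p[\bar{T}]^{V[H]} \subseteq \p[T]^{V[H]}$, contradicting $q \le p$ and $V \models p \forces \dot{x} \in \p[S]$. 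The reverse direction uses the symmetric inclusion: any $q \le p$ with $q \forces_V \dot{x} \in \p[T]$ leads, via a $V$-generic $H \ni q$, to $\dot{x}^H \in \p[T]^{V[H]}$ while the $P_\omega(\mathbb{R}^V)$-forcing hypothesis gives $\dot{x}^H \in \p[\bar{S}]^{P_\omega(\mathbb{R}^V)[H]} \subseteq \p[\bar{S}]^{V[H]} \subseteq \p[S]^{V[H]}$, contradicting disjointness of $\p[S]$ and $\p[T]$ in $V[H]$.

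The step I expect to be the main obstacle is verifying, in part (\ref{item-containment}), that the absolute complementation of $(\bar{S},\bar{T})$ in the symmetric model $P_\omega(\mathbb{R}^V)$ actually transfers to $\Col(\omega,\eta)$-absolute complementation in the small forcing extension $P_\omega[g \restrict \xi]$ where the pair actually lives. This is the bridge needed to invoke Remark \ref{homubequivrem} and then Lemma \ref{lem-map-ub-sets-up}, and it amounts to a standard but delicate symmetric-collapse computation in the spirit of Lemma \ref{lem-ordinal-ac-equivalent}.
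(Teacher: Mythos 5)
There is a genuine gap, and it is not the one you flagged at the end. In part~(\ref{item-containment}) you apply Lemma~\ref{lem-map-ub-sets-up} with $P = P_\omega$, $g = g\restrict\xi$, and the realization map $\pi_\omega \colon P_\omega \to V_\gamma$. But $\pi_\omega$ (and in general the model $P_\omega$ itself, the sequence $\langle g_\ell\rangle$, and the symmetric model $P_\omega(\mathbb{R}^V)$) live only in the $\Col(\omega,\mathbb{R})$-extension of $V$; Definition~\ref{defn-R-gen-iter} and the definition of $\pi_0$-realizability put each $P_j$, $i_{jk}$, $\pi_j$ in $V$ only for \emph{finite} indices, and the diagonal objects are assembled from a generic enumeration of $\mathbb{R}^V$. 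Lemma~\ref{lem-map-ub-sets-up} is a statement of $V$: its output trees $S = \bigcup_n\pi(\bar S\restrict n)$, $T = \bigcup_n\pi(\bar T\restrict n)$ are asserted to lie in $V_\gamma$, and its proof (via Lemmas~\ref{lem-map-ub-sets-up-no-strong} and \ref{lem-map-ub-sets-up}) runs the Martin--Solovay construction on the pushed-forward measure systems $\pi``\vec\mu$, $\pi``\vec\nu$ inside $V$. With $\pi = \pi_\omega$ these objects need not exist in $V$, so the lemma does not produce trees in $V$, which is exactly what is needed to conclude $A \in \uB^V$.

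The same issue undermines your part~(\ref{item-absoluteness}): you apply Lemma~\ref{lem-projection-inclusion} in $V$ to the pair $(\bar S, S)$, but $\bar S$ is only given to be in $P_\omega(\mathbb{R}^V)$, which need not be a subset of $V$, so $\p[\bar S]^{V[H]}$ and the forcing statement $1 \forces_V \p[\bar S] \subseteq \p[S]$ are not meaningful. The paper explicitly flags precisely this obstruction in its proof of part~(\ref{item-absoluteness}): ``This would follow from Lemma~\ref{lem-projection-inclusion} if $P_\omega(\mathbb{R}^V) \in V$, but it seems that the condition $P_\omega(\mathbb{R}^V) \in V$ may fail in general.'' The correct route, followed by the paper, is to first trace the witnessing trees back to some finite stage $P_j[g_{j-1}]$ (using directedness of the system of embeddings $i^*_{j,\omega}$), apply Lemma~\ref{lem-map-ub-sets-up} there with the realization $\pi_j$, which \emph{is} in $V$, and then transport the result to $P_\omega$ via the coherence identity $\pi_j = \pi_\omega \circ i_{j,\omega}$: this gives $S = \pi_\omega(i^*_{j,\omega}(\bar S))$, so $\pi_\omega$ maps branches of $i^*_{j,\omega}(\bar S)$ pointwise into branches of $S$, which yields the needed inclusions of projections in both $V$ and, in part~(\ref{item-absoluteness}), in an appropriate generic extension.
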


\begin{proof}
In terms of the notation for $\mathbb{R}$-genericity iterations given in (and just after) Definition \ref{defn-R-gen-iter}, the set $A \in \uB^{P_\omega(\mathbb{R}^V)}$ is represented by a pair of absolutely $\pi^{-1}_{\omega}(\lambda)$-complementing trees in $P_{\omega}(\bbR^{V})$, which, by the usual homogeneity argument, exist in $P_{\omega}[g_{j_{0}}]$, for some $j_{0} < \omega$.  Since $P_{\omega}$ is the direct limit of $\langle P_{i} : i < \omega \rangle$, these trees can be taken to be in the range of $i^{*}_{j,\omega}$ (the canonical extension of $i_{j,\omega}$ to $P_{j}[g_{j-1}]$) for some positive $j < \omega$.  
Otherwise stated, $A$ comes from a set $A_j \in \uB^{P_j[g_{j-1}]}$ appearing at stage $j$, in the sense that $A = i_{j,\omega}^*(A_j)^{P_\omega(\mathbb{R}^V)}$. 
 

To prove part (\ref{item-containment}), we apply Lemma \ref{lem-map-ub-sets-up} with $P = P_j$, 
$\gamma = \gamma$, $\pi = \pi_j$, $g = g_{j-1}$, $A = A_j$, $\bar{\alpha} = i_{j-1,j}(\delta_{j-1})$, $\bar{\kappa}$ equal to the least strong cardinal of $P_j$ above $i_{j-1,j}(\delta_{j-1})$, $\bar{\delta}$ equal to the least Woodin cardinal of $P_j$ above $\bar{\kappa}$, and $\bar{\eta} = \pi_j^{-1}(\lambda)$. This gives us a pair of $\pi^{-1}_j(\lambda)$-absolutely complementing trees $(\bar{S},\bar{T}) \in P_j[g_{j-1}]$ for $A_j$ such that for every $n < \omega$ the restrictions $\bar{S}\restrict n$ and $\bar{T} \restrict n$ to finite levels are in $P_j$, and, letting $S = \bigcup_{n<\omega} \pi_j(\bar{S}\restrict n)$ and $T = \bigcup_{n<\omega} \pi_j(\bar{T}\restrict n)$, the pair $(S,T)$ is $\lambda$-absolutely complementing in $V$.
 
Since $\pi_{\omega} \circ i_{j,\omega} = \pi_{j}$, $\pi_{\omega}(i_{j,\omega}(\bar{S} \restrict n)) = \pi_{j}(\bar{S} \restrict n)$ and $\pi_{\omega}(i_{j,\omega}(\bar{T} \restrict n)) = \pi_{j}(\bar{T} \restrict n)$ for each $n \in \omega$, so $S = \pi_{\omega}(i^{*}_{j,\omega}(\bar{S}))$ and $T = \pi_{\omega}(i^{*}_{j,\omega}(\bar{T}))$. 
This implies that $\pi_\omega `` (i_{j,\omega}^*(\bar{S})) \subseteq S$ and $\pi_\omega `` (i_{j,\omega}^*(\bar{T})) \subseteq T$.
It follows that the realization map $\pi_\omega$ takes branches of $i_{j,\omega}^*(\bar{S})$ and $i_{j,\omega}^*(\bar{T})$ pointwise to branches of $S$ and $T$ respectively, so the projections of these trees satisfy the inclusions $\p[i_{j,\omega}^*(\bar{S})] \subseteq \p[S]$ and $\p[i_{j,\omega}^*(\bar{T})] \subseteq \p[T]$.

Because $A = \p[i_{j,\omega}^*(\bar{S})] = \omega^\omega \setminus \p[i_{j,\omega}^*(\bar{T})]$ in $P_\omega(\mathbb{R}^V)$, these two inclusions imply that $A = \p[S] = \omega^\omega \setminus \p[T]$ in $V$. Therefore $A$ is in $\uB_\lambda^V$, which is equal to $\uB^V$ because there is a strong cardinal less than $\lambda$.

For part (\ref{item-absoluteness}), note that our genericity iteration was formed in some generic extension of $V_{\gamma}$ by the poset $\Col(\omega,\mathbb{R})$, and the resulting model $P_\omega(\mathbb{R}^V)$ is countable there.  In particular the set $Z$ is countable there. So in a generic extension of $V_{\gamma}$ by the poset $\Col(\omega,\mathbb{R}) \times \Col(\omega,\omega)$, which is forcing-equivalent to $\Col(\omega,\mathbb{R})$, we can find a filter $H \subseteq \Col(\omega,Z)$ that contains the condition $p$ and is both $V$-generic and $P_\omega(\mathbb{R}^V)$-generic. Let $x = \dot{x}_H$. We want to see that the two ways of expanding the set $A$, given by $A \in \uB^{P_\omega(\mathbb{R}^V)}$ and $A \in \uB^V$ respectively, agree on whether $x$ is a member. This would follow from Lemma \ref{lem-projection-inclusion} if $P_\omega(\mathbb{R}^V) \in V$, but it seems that the condition $P_\omega(\mathbb{R}^V) \in V$ may fail in general, which is why we seem to need Lemma \ref{lem-map-ub-sets-up}.

By Lemma \ref{lem-ordinal-ac-equivalent}, in the model $P_\omega(\mathbb{R}^V)$ there is an ordinal $\eta_\omega \ge \pi_\omega^{-1}(\lambda)$ such that every $\eta_\omega$-absolutely complementing set of trees is $Z$-absolutely complementing. Increasing $j$ (from the first paragraph of the proof of the lemma) if necessary, we may assume that $\eta_\omega = i_{j,\omega}(\bar{\eta})$ for some ordinal $\bar{\eta} \in P_j$.

Now we can apply Lemma \ref{lem-map-ub-sets-up} as in the proof of
part (\ref{item-containment}), except that now $\bar{\eta}$ may be strictly larger than $\pi_j^{-1}(\lambda)$, to get a pair of $\bar{\eta}$-absolutely complementing trees $(\bar{S},\bar{T}) \in P_j[g_{j-1}]$ for $A_j$ such that for every $n < \omega$ the restrictions $\bar{S}\restrict n$ and $\bar{T} \restrict n$ to finite levels are in $P_j$, and, letting $S = \bigcup_{n<\omega} \pi_j(\bar{S}\restrict n)$ and $T = \bigcup_{n<\omega} \pi_j(\bar{T}\restrict n)$ in $V_{\gamma}$, the pair $(S,T)$ is $\eta$-absolutely complementing where $\eta = \pi_j(\bar{\eta})$. In particular the pair $(S,T)$ is $\mathbb{R}$-absolutely complementing. Moreover the pair $(S_{\omega}, T_{\omega}) = ( i_{j,\omega}^*(\bar{S}), i_{j,\omega}^*(\bar{T}))$ is $\eta_\omega$-absolutely complementing in $P_\omega[g_{j-1}]$, so also in $P_\omega(\mathbb{R}^V)$. It follows from our choice of $\eta_\omega$ that $(S_{\omega}, T_{\omega})$ is $Z$-absolutely complementing in $P_\omega(\mathbb{R}^V)$.

We can use the pair of trees $(S_{\omega}, T_{\omega})$
to decide membership of $x$ in the canonical expansion of $A$ from the point of view of $P_\omega(\mathbb{R}^V)$, and use the pair of trees $(S,T)$ to decide membership of $x$ in the canonical expansion of $A$ from the point of view of $V$. As in the proof of part (\ref{item-containment}), we get the same answer in both cases, because the realization map $\pi_\omega$ takes branches of 
$S_{\omega}$
pointwise to branches of $S$, and takes branches of 
$T_{\omega}$
pointwise to branches of $T$.
\end{proof}

The first part of Lemma \ref{lem-containment-and-absoluteness} implies (assuming its hypotheses) that $\uB^{P_{\omega}(\bbR^{V})}$ is a Wadge-initial segment of $\uB^{V}$. In conjunction with the second part, this implies that 
\[ L^F(\mathbb{R}, \uB)^{P_\omega(\mathbb{R}^V)} \subseteq L^F(\mathbb{R}, \uB)^V.\]
Lemma \ref{lem-DM-of-P-omega} below is a stronger statement that implies this inclusion, phrased in terms of the following definition.


\begin{defn} Given an ordinal $\alpha$, we let $\uB \restrict \alpha$ denote the collection of universally Baire sets of Wadge rank less than $\alpha$,  and let $F_\alpha$ be the class of all quadruples $(A,\mathbb{P},p,\dot{x})$ in $F$ for which $A \in \uB \restrict \alpha$.  
\end{defn}


If $\alpha$ is at least the Wadge rank of the pointclass $\uB$, then we simply have $\uB \restrict \alpha = \uB$. 
Using these definitions we can state an immediate consequence of Lemma \ref{lem-containment-and-absoluteness}:

\begin{lem}\label{lem-DM-of-P-omega}
 Let $\lambda$ be a limit of Woodin cardinals and a limit of strong cardinals, and let $\gamma$ be a limit ordinal of cofinality greater than $\lambda$.  Let $P_{0}$ be a countable transitive set and let $\pi_0 \colon P_0 \to V_{\gamma}$ be an elementary embedding with $\lambda \in \ran(\pi_0)$. Let $P_\omega$ be obtained from $P_0$ by a $\pi_0$-realizable $\mathbb{R}$-genericity iteration at $\lambda$. Then we have
 \[ L^F(\mathbb{R}, \uB)^{P_\omega(\mathbb{R}^V)} = L_\beta^{F_\alpha}(\mathbb{R}, \uB\restrict \alpha)^V,\]
 where $\alpha$ is the Wadge rank of the pointclass $\uB^{P_\omega(\mathbb{R}^V)}$ and $\beta$ is the ordinal height of the model $P_\omega$.
\end{lem}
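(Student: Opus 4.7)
The plan is to read off the equation from the two parts of Lemma \ref{lem-containment-and-absoluteness} via a level-by-level induction on the $L^F$-hierarchy. I would first record two preliminaries about $P_\omega(\mathbb{R}^V)$: by item (12) of Definition \ref{defn-R-gen-iter} together with the usual symmetric-extension argument, $\mathbb{R}^{P_\omega(\mathbb{R}^V)} = \mathbb{R}^V$; and since $P_\omega$ is a class of $P_\omega(\mathbb{R}^V)$, the ordinal height of $P_\omega(\mathbb{R}^V)$ is $\beta$. Next, part (\ref{item-containment}) of Lemma \ref{lem-containment-and-absoluteness} gives $\uB^{P_\omega(\mathbb{R}^V)} \subseteq \uB^V$ as pointclasses over the common reals. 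Because $\uB$ is closed under continuous preimages and complements, a Wadge reduction witnessed in $V$ (coded by a real, hence living in $P_\omega(\mathbb{R}^V)$ as well) lifts to a witness inside $P_\omega(\mathbb{R}^V)$ once the target $\uB$ set is there. Hence $\uB^{P_\omega(\mathbb{R}^V)}$ is a Wadge-initial segment of $\uB^V$ of some rank $\alpha$, and $\uB^{P_\omega(\mathbb{R}^V)} = (\uB \restrict \alpha)^V$.

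Next, I would use part (\ref{item-absoluteness}) of Lemma \ref{lem-containment-and-absoluteness} to show that the predicate $F$ as computed in $P_\omega(\mathbb{R}^V)$ agrees with $F_\alpha$ as computed in $V$ on quadruples $(A,\mathbb{P},p,\dot{x})$ whose parameters lie in $P_\omega(\mathbb{R}^V) \cap V$. The cited part handles the case $\mathbb{P} = \Col(\omega,Z)$ directly; the general case reduces to this by the standard regular embedding of $\mathbb{P}$ into $\Col(\omega,\mathbb{P})$ (Remark \ref{collapserem}), through which membership of a realized name in the canonical expansion of $A$ is preserved. Thus on quadruples with parameters in $P_\omega(\mathbb{R}^V) \cap V$, one has $F^{P_\omega(\mathbb{R}^V)} = F_\alpha^V$.

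Finally, an induction on $\xi \leq \beta$ gives
\[L_\xi^F(\mathbb{R},\uB)^{P_\omega(\mathbb{R}^V)} = L_\xi^{F_\alpha}(\mathbb{R},\uB \restrict \alpha)^V.\]
The base case uses the equalities of reals and of $\uB$ pointclasses established above. At successor and limit stages, the inductive hypothesis ensures that every parameter appearing in a definition at level $\xi$ lies in both models, so the predicate agreement of the previous paragraph implies that the newly defined set is the same in both models. Taking $\xi = \beta$ yields the lemma, since $\Ord^{P_\omega(\mathbb{R}^V)} = \beta$ and the $L^F$-construction in $V$ is explicitly truncated to height $\beta$ on the right-hand side. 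The main fiddly point I expect is the bookkeeping around parameters --- ensuring the cited absoluteness really applies to every quadruple one encounters --- but the induction handles exactly this, since the $L^F$-construction only ever consults $F$ on already-constructed objects, which lie in both models by the inductive hypothesis.
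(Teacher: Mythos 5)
Your proposal matches the paper's intended argument closely: the paper states Lemma~\ref{lem-DM-of-P-omega} as an ``immediate consequence'' of Lemma~\ref{lem-containment-and-absoluteness}, and in the paragraph just before it sketches exactly the two ingredients you isolate --- the Wadge-initial-segment observation from part~(\ref{item-containment}) and the $F$-agreement from part~(\ref{item-absoluteness}) --- before running the levelwise comparison. Your preliminaries (equality of reals via the symmetric-extension argument, $\Ord^{P_\omega(\mathbb{R}^V)} = \beta$, and the Wadge-downward-closure argument making $\uB^{P_\omega(\mathbb{R}^V)}$ an initial segment of $\uB^V$) are correct, and your induction on $\xi \le \beta$ is the right way to turn the predicate-agreement into the stated equality of models.

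The one place that deserves more care is the step you flag yourself: using Remark~\ref{collapserem} to pass from $\Col(\omega,Z)$-quadruples (the only ones directly covered by part~(\ref{item-absoluteness}) of Lemma~\ref{lem-containment-and-absoluteness}) to general posets $\mathbb{P}$. McAloon's regular-embedding theorem, as cited in Remark~\ref{collapserem}, is a $\ZFC$ result, and the predicate $F$ in the model $P_\omega(\mathbb{R}^V)$ is being computed internally in a model of $\ZF$ that need not satisfy Choice. The paper is explicit about this subtlety for $V(\mathbb{R}^*_G)$ (the last sentence of Remark~\ref{collapserem} points to Lemma~\ref{lem-ordinal-ac-equivalent} as the workaround), and the same device is available here: $P_\omega(\mathbb{R}^V)$ is itself a symmetric collapse extension of $P_\omega$, so Lemma~\ref{lem-ordinal-ac-equivalent} applies to it and shows that $\mathord{<}\Ord$-universally Baire sets are universally Baire there; together with Definition~\ref{defn-lt-kappa-uB} (which, note, restricts attention to wellordered posets) this is what lets one align the witnessing trees for a general $\mathbb{P}$ in both models. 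Your appeal to the ``standard regular embedding'' should be replaced by, or routed through, this $\ZF$-safe argument; without it the reduction to collapses is not available in $P_\omega(\mathbb{R}^V)$. Otherwise the proposal is a faithful expansion of what the paper leaves implicit.
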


\section{$\Sigma^2_1$ reflection}\label{sec-sigma-2-1-reflection}

In this section we prove the following $\Sigma^2_1$ reflection result, following Steel's stationary-tower-free proof \cite{SteSTFree} of $\Sigma^2_1$ reflection for the model $L(\mathbb{R}^*_G, \Hom^*_G)$.

\begin{lem}\label{lem-Sigma-2-1-reflection}
 Let $\lambda$ be a limit of Woodin cardinals and a limit of strong cardinals, let $G \subseteq \Col(\omega,\mathord{<}\lambda)$ be a $V$-generic filter, and let $\mathcal{M}$ be the model $L^F(\mathbb{R}^*_G, \Hom^*_G)^{V(\mathbb{R}^*_G)}$. For every sentence $\varphi$, if there is a set of reals $A\in \mathcal{M}$ such that $(\HC^*_G,\mathord{\in},A) \models \varphi$, then there is a set of reals $A \in \Hom_{\mathord{<}\lambda}^V$ such that $(\HC^{V},\mathord{\in},A) \models \varphi$.
\end{lem}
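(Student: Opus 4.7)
The strategy is to adapt Steel's $\mathbb{R}$-genericity iteration approach to $\Sigma^2_1$ reflection for $L(\mathbb{R}^*_G, \Hom^*_G)$ from \cite{SteSTFree}, with modifications to accommodate the predicate $F$. I first fix a limit ordinal $\gamma > \lambda$ of cofinality greater than $\lambda$ such that $V_\gamma$ reflects the fragment of the theory of $V$ that is relevant, and let $\pi_0 \colon P_0 \to V_\gamma$ be the inverse of the Mostowski collapse of a countable elementary substructure $X \prec V_\gamma$ containing $\lambda$ and enough Skolem witnesses. The plan is to consider $\pi_0$-realizable $\mathbb{R}$-genericity iterations $P_0 \to P_\omega$ at $\pi_0^{-1}(\lambda)$ carried out in $V^{\Col(\omega,\mathbb{R})}$, using the symmetric model $P_\omega(\mathbb{R}^V)$ as a faithful image of $V(\mathbb{R}^*_G)$.

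Given a witness $A \in \mathcal{M}$ with $(\HC^*_G, \in, A) \models \varphi$, I first use the homogeneity of $\Col(\omega, \mathord{<}\lambda)$ together with a L\"{o}wenheim--Skolem argument inside $\mathcal{M}$ to reduce to the case that $A$ is $\mathcal{M}$-definable from parameters $\bar{A}$ lying in the range of $\pi_0$ (interpreted via the $F$-predicate and $\Hom^*_G$). This definition of $A$ in $\mathcal{M}$ transfers to a definition inside $L^F(\mathbb{R}, \uB)^{P_\omega(\mathbb{R}^V)}$ of a corresponding set $A'$ via the formula
\[
A' = \{x \in \mathbb{R}^V : P_\omega(\mathbb{R}^V) \models \psi[C, x]\},
\]
where $\psi$ is an appropriate formula and $C$ is the parameter obtained by applying the iteration map $i_{0,\omega}^*$ to the $\pi_0$-preimage of $\bar{A}$. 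By Lemma \ref{lem-DM-of-P-omega} this $A'$ is a well-defined set in $V$.

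Two things must then be checked. First, the definition of $A'$ is independent of the particular $\pi_0$-realizable iteration chosen: this follows from Lemma \ref{lem-containment-and-absoluteness}(\ref{item-absoluteness}) (absoluteness of $F$ between $V$ and $P_\omega(\mathbb{R}^V)$) together with the symmetry of $\Col(\omega, \mathbb{R})$. Once independence is in hand, Lemma \ref{lem-absolutely-definable} applies to give $A' \in \Hom^V_{\mathord{<}\lambda}$. Second, $(\HC^V, \in, A') \models \varphi$: the parameters of $A$ inside $\mathcal{M}$ correspond via the iteration to parameters of $A'$ inside $L^F(\mathbb{R}, \uB)^{P_\omega(\mathbb{R}^V)}$, and the symmetric construction of $P_\omega(\mathbb{R}^V)$ from the collapse $g$ of item (\ref{pomegasymmtem}) of Definition \ref{defn-R-gen-iter} mirrors that of $V(\mathbb{R}^*_G)$ from $G$.

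The main obstacle will be this last verification. The witness $A$ may be defined from parameters in $\Hom^*_G$ whose trees live in some $V[G \restrict \xi]$ with $\xi < \lambda$, and their $P_\omega$-analogs (obtained by applying $i_{0,\omega}^*$ to the $\pi_0$-preimages of such trees, which exist by the elementarity of $\pi_0$) must interact correctly with $\HC^V$. Making this precise requires bookkeeping of the iteration so that the symmetric filter $g$ faithfully represents $G$ at the parameters of interest, as in Remark \ref{gisomrem}, and uses Remark \ref{treemovecorrem} to identify the canonical expansions of the relevant $\uB$-sets along the iteration. The $F$-predicate itself poses no new difficulties beyond those already handled in Section \ref{sec-F-absoluteness}.
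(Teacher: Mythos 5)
Your overall framework (countable hull $\pi_0 \colon P_0 \to V_\gamma$, $\pi_0$-realizable $\mathbb{R}$-genericity iterations, Lemma~\ref{lem-DM-of-P-omega}, finishing via Lemma~\ref{lem-absolutely-definable}) matches the paper's proof, but the pivotal step is missing and the independence claim is not correct as stated.

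The paper does not push the given witness $A \in \mathcal{M}$ down through an iteration. It instead works entirely in $V$: it takes the \emph{least} ordinal $\beta$ such that $L_\beta^{F_\alpha}(\mathbb{R}, \uB\restrict\alpha)^V$ contains a $\varphi$-witness (where $\alpha$ is determined by a case split), and then takes the \emph{least} $\varphi$-witness $A$ in the canonical ordinal-definability wellorder of that model, from a parameter $C$ placed in $\ran(\pi_0)$. The minimality of $\beta$ and of $A$ is precisely what makes the defining formula $\psi$ evaluate to the same set $A$ in \emph{every} $\pi_0$-realizable iterate $P_\omega(\mathbb{R}^V)$, which is exactly what Lemma~\ref{lem-absolutely-definable} demands. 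In your proposal this uniformity does not hold: a given $\mathcal{M}$-definition of $A$ from ordinal parameters in $\mathcal{M}$ cannot be transferred to $P_\omega(\mathbb{R}^V)$ unchanged, because by Lemma~\ref{lem-DM-of-P-omega} the transferred model is $L_\beta^{F_\alpha}(\mathbb{R}, \uB\restrict\alpha)^V$ with $\alpha$ and $\beta$ \emph{depending on the iteration} --- $\alpha$ is the Wadge rank of $\uB^{P_\omega(\mathbb{R}^V)}$ and $\beta = \Ord^{P_\omega}$, both of which vary. Lemma~\ref{lem-containment-and-absoluteness}(\ref{item-absoluteness}) gives absoluteness of the $F$-predicate between $V$ and a single $P_\omega(\mathbb{R}^V)$; it says nothing about agreement between two different iterates $P_\omega$ and $P'_\omega$ with different heights, which is the comparison your independence claim requires. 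Without a minimality argument, the set $A'$ you produce is a different set for different iterations, and Lemma~\ref{lem-absolutely-definable} does not apply.

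There is a second, related gap: your proposal treats all iterations uniformly, whereas the paper must distinguish two cases. In Case~\ref{case-all-of-uB}, cofinally many iterates have $\uB^{P_\omega(\mathbb{R}^V)} = \uB^V$, and the minimality argument runs directly. In Case~\ref{case-part-of-uB}, where cofinally many iterates have $\uB^{P_\omega(\mathbb{R}^V)} \subsetneq \uB^V$, one first needs the intermediate claim (proved by a descending-ordinals argument) that there is a bounding set $D \in \uB^V$ such that a $\varphi$-witness already appears in $L^{F_{\alpha^*}}(\mathbb{R}^*_G, \uB\restrict\alpha^*)^{V(\mathbb{R}^*_G)}$, so that the pointclass cut $\uB\restrict\alpha$ can be fixed by the parameter $D$ rather than allowed to drift with the iteration. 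This case distinction is what guarantees that the relevant $L_\beta^{F_\alpha}(\mathbb{R}, \uB\restrict\alpha)^V$ is well-defined independently of the iteration, which is a prerequisite for the minimality argument to yield a uniformly definable $A$. Your ``main obstacle'' paragraph correctly identifies that something must be verified about the interaction of $\Hom^*_G$-parameters with $\HC^V$, but the obstacle is resolved by the minimality-plus-case-analysis mechanism, not by the bookkeeping of Remark~\ref{gisomrem} alone.
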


Recall that $\Hom^*_G = \uB^{V(\mathbb{R}^*_G)}$, as $\lambda$ is a limit of strong cardinals, so we have an alternative characterization of $\mathcal{M}$ that will be useful in this section:
\[\mathcal{M} = L^F(\mathbb{R}^*_G, \uB)^{V(\mathbb{R}^*_G)}.\]

Supposing that the model $\mathcal{M}$ has a $\varphi$-witness, the idea of the proof is to take a countable hull of some $V_{\gamma}$ containing a $\varphi$-witness, and then to do an $\mathbb{R}$-genericity iteration of the hull to get a $\varphi$-witness that is a subset of $\mathbb{R}^V$. More specifically, we start by fixing a limit ordinal $\gamma$ of cofinality greater than $\lambda$, so that every set of reals in $\mathcal{M}$ is in $L_{\gamma}^{F}(\bbR^{*}_{G}, \Hom^{*}_{G})^{V_{\gamma}(\bbR^{*}_G)}$. Considering an $\bbR$-genericity iteration of the transitive collapse $P_{0}$ of an elementary substructure of $V_{\gamma}$, the elementarity of the corresponding map $\pi_\omega \colon P_\omega \to V_{\gamma}$ (and the definability of forcing) gives a $\varphi$-witness in the model $L^F(\mathbb{R},\uB)^{P_\omega(\mathbb{R}^V)}$, which is equal to the model $L_\beta^{F_\alpha}(\mathbb{R}, \uB\restrict \alpha)^V$ by Lemma \ref{lem-DM-of-P-omega}, where $\alpha$ is the Wadge rank of the pointclass $\uB^{P_\omega(\mathbb{R}^V)}$ and $\beta$ is the ordinal height of the model $P_\omega$.  In particular, there is a $\varphi$-witness in the model $L^F(\mathbb{R},\uB)^V$.

We will show that the least $\varphi$-witness arising in this manner is in $\Hom_{\mathord{<}\lambda}$. To do this, we will consider different countable hulls $P_0$ and different genericity iterations of these hulls giving rise to possibly different models of the form $L^F(\mathbb{R}, \uB)^{P_\omega(\mathbb{R}^V)}$. We will see that the least $\varphi$-witness will be present in all of these models and will admit a uniform definition there in the sense of  Lemma \ref{lem-absolutely-definable}.

We consider two cases involving two different notions of least $\varphi$-witness. The cases are exhaustive but not necessarily  mutually exclusive; if they overlap, either one can be used.

\begin{case}\label{case-all-of-uB}
For cofinally many elementary countable hulls $\pi_0 \colon P_0 \to V_{\gamma}$ with $\lambda \in \ran(\pi_0)$, every $\pi_0$-realizable $\mathbb{R}$-genericity iteration $P_0\to P_\omega$ at $\lambda$ has the property that $\uB^{P_\omega(\mathbb{R}^V)} = \uB^V$.
\end{case}

In this case, let $\beta$ be the least ordinal such that the model $L_\beta^F(\mathbb{R}, \uB)^V$ has a $\varphi$-witness (which is less than $\gamma$, as noted above). Take a set of reals $C \in \uB^V$ such that this model 
contains a $\varphi$-witness that is ordinal-definable from the parameter $C$ and the predicate $F$, and let $A \in L_\beta^F(\mathbb{R}, \uB)^V$ be the least such $\varphi$-witness in the canonical well-ordering of sets ordinal definable over $L^{F}_{\beta}(\mathbb{R}, \uB)^{V}$ from $C$ and $F$.

Take a countable hull $\pi_0 \colon P_0 \to V_{\gamma}$ such that $(C, \lambda) \in \ran(\pi_0)$ and for every $\pi_0$-realizable $\mathbb{R}$-genericity iteration $P_0\to P_\omega$ at $\lambda$ we have $\uB^{P_\omega(\mathbb{R}^V)} = \uB^V$. For every such $\mathbb{R}$-genericity iteration we have $\beta \in P_\omega$ by the minimality of $\beta$; otherwise the model $L^F(\mathbb{R}, \uB)^{P_\omega(\mathbb{R}^V)}$ would not be tall enough to reach a $\varphi$-witness (by Lemma \ref{lem-DM-of-P-omega}), violating the elementarity of $\pi_\omega$. Moreover, the set $C$ is in $P_\omega(\mathbb{R}^V)$ because it is the canonical expansion of the set $\pi_\omega^{-1}(C) \in P_\omega$, as in Remark \ref{treemovecorrem}. 

For every $\pi_0$-realizable $\mathbb{R}$-genericity iteration $P_0\to P_\omega$ at $\lambda$, the ordinal $\beta$ is definable in $P_\omega(\mathbb{R}^V)$ as the least ordinal such that the model $L_\beta^F(\mathbb{R}, \uB)^{P_\omega(\mathbb{R}^V)}$ contains a $\varphi$-witness, and in turn the least $\varphi$-witness $A$ is definable from $C$ in the model $L_\beta^F(\mathbb{R}, \uB)^{P_\omega(\mathbb{R}^V)}$ by the same definition as in the model $L_\beta^F(\mathbb{R}, \uB)^V$, which is the same model. This implies that $A \in \Hom_{\mathord{<}\lambda}^V$ by Lemma \ref{lem-absolutely-definable} as desired.

\begin{case}\label{case-part-of-uB}
 For cofinally many sufficiently elementary countable hulls $\pi_0 \colon P_0 \to V_{\gamma}$ with $\lambda \in \ran(\pi_0)$, there is a $\pi_0$-realizable $\mathbb{R}$-genericity iteration $P_0\to P_\omega$ at $\lambda$ such that $\uB^{P_\omega(\mathbb{R}^V)} \subsetneq \uB^V$.
\end{case}

In this case we will need the following claim, which says that we can put an upper bound on the pointclass necessary to construct a $\varphi$-witness.

\begin{claim*}
 Under the assumption of Case \ref{case-part-of-uB}, there is a
 set of reals $D \in \uB^V$ such that, letting $D^*$ denote the canonical expansion $D^{V(\mathbb{R}^*_G)}$ and letting $\alpha^*$ denote the Wadge rank of $D^*$ in $V(\mathbb{R}^*_G)$, the model
\[L^{F_{\alpha^*}}(\mathbb{R}^*_G, \uB \restrict \alpha^*)^{V(\mathbb{R}^*_G)}\]
 contains a $\varphi$-witness.
\end{claim*}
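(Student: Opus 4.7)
The plan is to exploit the Case~\ref{case-part-of-uB} hypothesis to reflect the $\varphi$-witness in $\mathcal{M}$ down to a Wadge-bounded fragment of the construction, then to transfer that fragment from $V$ to $V(\mathbb{R}^*_G)$ via canonical expansion, identifying the resulting bound as the Wadge rank of $D^*$ for a suitable $D \in \uB^V$.

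First I would invoke the Case~\ref{case-part-of-uB} assumption to fix a sufficiently elementary countable hull $\pi_0 \colon P_0 \to V_\gamma$ with $\lambda \in \ran(\pi_0)$, together with a $\pi_0$-realizable $\mathbb{R}$-genericity iteration $P_0 \to P_\omega$ at $\lambda$ such that $\uB^{P_\omega(\mathbb{R}^V)} \subsetneq \uB^V$; here ``sufficiently elementary'' means that $V_\gamma$ reflects the hypothesis that $\mathcal{M}$ contains a $\varphi$-witness. Using the first part of Lemma~\ref{lem-containment-and-absoluteness} together with the closure of $\uB^{P_\omega(\mathbb{R}^V)}$ under complements and under continuous preimages by reals in $\mathbb{R}^V$, one sees that $\uB^{P_\omega(\mathbb{R}^V)}$ is a selfdual Wadge-initial segment of $\uB^V$. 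Let $\alpha$ denote its Wadge rank in $V$, and pick $D \in \uB^V$ of Wadge rank $\alpha$ in $V$ (the least rank realizing the gap $\uB^V \setminus \uB^{P_\omega(\mathbb{R}^V)}$). Let $D^*$ and $\alpha^*$ denote the canonical expansion of $D$ and its Wadge rank in $V(\mathbb{R}^*_G)$.

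Next, applying the elementarity of $\pi_\omega \colon P_\omega \to V_\gamma$ to the hypothesis on $\mathcal{M}$ shows that $P_\omega(\mathbb{R}^V)$ satisfies that $L^F(\mathbb{R}, \uB)$ contains a $\varphi$-witness (with $\HC^{P_\omega(\mathbb{R}^V)} = \HC^V$). By Lemma~\ref{lem-DM-of-P-omega} this model equals $L_\beta^{F_\alpha}(\mathbb{R}, \uB\restrict\alpha)^V$, where $\beta$ is the ordinal height of $P_\omega$, so there is a $\varphi$-witness $A$ in the $V$-level model $L^{F_\alpha}(\mathbb{R}, \uB\restrict\alpha)^V$.

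Finally, I would transfer $A$ into the target model $L^{F_{\alpha^*}}(\mathbb{R}^*_G, \uB\restrict\alpha^*)^{V(\mathbb{R}^*_G)}$ by a level-by-level lift of the constructibility hierarchy via the canonical expansion map $B \mapsto B^*$. This map sends reals in $\mathbb{R}^V$ to themselves inside $\mathbb{R}^*_G$, sends each $B \in (\uB\restrict\alpha)^V$ to its canonical expansion $B^* \in \Hom^*_G$, and preserves the $F$ predicate by the uniqueness of canonical expansions (Lemmas~\ref{lem-projection-inclusion} and~\ref{lem-unique-expansion}); the choice of $D$ at the boundary is meant to ensure strict Wadge-preservation, so the images lie in $(\uB\restrict\alpha^*)^{V(\mathbb{R}^*_G)}$. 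The image of $A$ under this lift then yields the desired $\varphi$-witness for $\HC^*_G$. The main obstacle I anticipate is verifying that the image of $A$ genuinely satisfies $\varphi$ over the expanded $\HC^*_G$ and not merely over $\HC^V$: this amounts to showing the two constructibility hierarchies are isomorphic enough at the level of first-order satisfaction over their respective $\HC$-structures, and also requires careful strict-Wadge bookkeeping for the canonical expansion map (possibly enlarging $D$ slightly if strict preservation fails). I expect the details to proceed along the lines of the analogous step in Steel's argument in \cite{SteSTFree}.
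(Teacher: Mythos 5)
The first two paragraphs of your proposal track the paper's proof correctly: you fix a hull and iteration witnessing Case~\ref{case-part-of-uB}, set $\alpha$ equal to the Wadge rank of $\uB^{P_\omega(\mathbb{R}^V)}$, pick $D$ of Wadge rank $\alpha$, and use Lemma~\ref{lem-DM-of-P-omega} with the elementarity of $\pi_\omega$ to produce a $\varphi$-witness in $L_\beta^{F_\alpha}(\mathbb{R},\uB\restrict\alpha)^V$, where $\beta = \Ord^{P_\omega}$. Up to this point you are reproducing the paper's argument. The final paragraph, however, contains a genuine gap, and it is exactly the step you flag as ``the main obstacle I anticipate.''

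There is no ``level-by-level lift of the constructibility hierarchy via the canonical expansion map'' from $L^{F_\alpha}(\mathbb{R},\uB\restrict\alpha)^V$ into $L^{F_{\alpha^*}}(\mathbb{R}^*_G,\uB\restrict\alpha^*)^{V(\mathbb{R}^*_G)}$, and the paper does not claim one. The two hierarchies are generated over \emph{different} sets of reals ($\mathbb{R}^V$ versus $\mathbb{R}^*_G$), so the assignment you describe (reals to themselves, $B \mapsto B^*$) is neither surjective at the bottom level nor an elementary embedding in any sense that transfers satisfaction over $\HC$. In particular the $\varphi$-witness $A$ you obtain in $V$ is a subset of $\mathbb{R}^V$, and there is no mechanism in the paper for expanding $A$ itself to a set $A^* \subseteq \mathbb{R}^*_G$ with $(\HC^*_G,\in,A^*)\models\varphi$ unless $A$ is already known to be universally Baire and $\varphi$ enjoys a suitable tree-absoluteness, neither of which is available at this point. (Lemma~\ref{lem-DM-of-P-omega} works precisely because $P_\omega(\mathbb{R}^V)$ and $V$ share the same reals; that is the ingredient your transfer is missing.)

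The paper sidesteps this entirely by a \emph{second} reflection rather than a direct transfer: it takes a new countable hull $\pi_0' \colon P_0' \to V_\gamma$ with $(D,\lambda) \in \ran(\pi_0')$, builds a $\pi_0'$-realizable $\mathbb{R}$-genericity iteration $P_0' \to P_\omega'$ with $\uB^{P_\omega'(\mathbb{R}^V)} \subsetneq \uB^V$, and checks that $P_\omega'(\mathbb{R}^V)$ sees $D$ (as the canonical expansion of $\pi_{\omega}'^{-1}(D)$) and sees $\uB^V\restrict\alpha$ as its own bounded $\uB$ pointclass. If $\Ord^{P_\omega'} \ge \beta$, then $P_\omega'(\mathbb{R}^V)$ is tall enough to verify the existence of the $\varphi$-witness, and the claim about $V(\mathbb{R}^*_G)$ follows by the elementarity of $\pi_\omega'$ (via the forcing theorem, since the symmetric extension is definable from a generic). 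This introduces a new difficulty absent from your proposal: the second model may be too short, i.e., $\Ord^{P_\omega'} < \Ord^{P_\omega}$. The paper resolves this by iterating the choice of hulls; an infinite descent $\Ord^{P_\omega} > \Ord^{P_\omega'} > \cdots$ is impossible, so after finitely many steps a suitable $D$ works. This ordinal bookkeeping is a necessary part of the argument and does not appear in your proposal, because your proposed (and unrealizable) direct lift would have made it unnecessary.
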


\begin{proof}
 Take an elementary countable hull $\pi_0 \colon P_0 \to V_{\gamma}$ with $\lambda \in \ran(\pi_0)$ and take a $\pi_0$-realizable $\mathbb{R}$-genericity iteration $P_0\to P_\omega$ at $\lambda$ such that $\uB^{P_\omega(\mathbb{R}^V)} \subsetneq \uB^V$. Then by Lemma \ref{lem-DM-of-P-omega} and the elementarity of the realization map $\pi_\omega \colon P_\omega \to V_{\gamma}$ there is a $\varphi$-witness in the model $L^{F_\alpha}_\beta(\mathbb{R}, \uB \restrict \alpha)^V$, where $\alpha$ is the Wadge rank of the pointclass $\uB^{P_\omega(\mathbb{R}^V)}$ and $\beta = \Ord^{P_\omega}$.

 Take a set of reals $D \in \uB^V$ of Wadge rank $\alpha$, take a sufficiently elementary countable hull $\pi_0' \colon P_0' \to V_{\gamma}$ with $(D, \lambda) \in \ran(\pi_0')$, and take a $\pi_0'$-realizable $\mathbb{R}$-genericity iteration $P_0'\to P_\omega'$ at $\lambda$ such that $\uB^{P_\omega'(\mathbb{R}^V)} \subsetneq \uB^V$.

 The model $P_\omega'(\mathbb{R}^V)$ sees the set $D$ as the canonical expansion of $\pi_\omega^{-1}(D)$ and it sees the pointclass $\uB^V \restrict \alpha$ as its collection of $\uB$ sets of Wadge rank less than that of $D$. If we have $\Ord^{P_\omega} \le \Ord^{P_\omega'}$ then the model $P_\omega'(\mathbb{R}^V)$ is tall enough to see that a $\varphi$-witness can be built from $\uB^V \restrict \alpha$ using the $F$ predicate, and the claim follows from the elementarity of the realization map $\pi_\omega' \colon P_\omega' \to V_{\gamma}$.

 On the other hand, if $\Ord^{P_\omega} > \Ord^{P_\omega'}$ then we can repeat this procedure with the hull $\pi_0' \colon P_0' \to V_{\gamma}$ in place of the hull $\pi_0 \colon P_0 \to V_{\gamma}$.  (This is why we made sure to take the second hull with $\uB^{P_\omega'(\mathbb{R}^V)} \subsetneq \uB^V$ also.) Repeating this procedure for $\omega$ many steps,
 either at some step we get a set $D \in \uB^V$ satisfying the claim, or else we get an infinite decreasing sequence of ordinals $\Ord^{P_\omega} > \Ord^{P_\omega'} > \Ord^{P_\omega''} > \cdots$, a contradiction.
\end{proof}

Now we can proceed as in Case \ref{case-all-of-uB}.  Let $D \in \uB^V$ be a set as in the claim. Taking any countable hull $\pi_0 : P_0 \to V_{\gamma}$ with $D \in \ran(\pi_0)$ and taking any genericity iterate $P_0 \to P_\omega$ of this hull, by the elementarity of the factor map $\pi_\omega$ and the proof of Lemma \ref{lem-DM-of-P-omega} we see that there is a $\varphi$-witness in the model $L_\beta^{F_\alpha}(\mathbb{R}, \uB \restrict \alpha)^V$ for some ordinal $\beta < \gamma$, where $\alpha$ is the Wadge rank of $D$. Let $\beta$ be the least ordinal such that there is a $\varphi$-witness in the model $L_\beta^{F_\alpha}(\mathbb{R}, \uB \restrict \alpha)^V$.

Take a set of reals $C \in \uB^V \restrict \alpha$ such that the model $L_\beta^{F_\alpha}(\mathbb{R}, \uB \restrict \alpha)^V$ has a $\varphi$-witness that is ordinal-definable from the parameter $C$ and the predicate $F$, and let $A \in L_\beta^{F_\alpha}(\mathbb{R}, \uB \restrict \alpha)^V$ be the least such $\varphi$-witness in the canonical well-ordering relative to $C$. 

Take a countable hull $\pi_0 \colon P_0 \to V_{\gamma}$ with $(\lambda,C,D) \in \ran(\pi_0)$. For every $\mathbb{R}$-genericity iteration $P_0 \to P_\omega$ we have $\beta \in P_\omega$ by the minimality of $\beta$; otherwise the model $L^{F_\alpha}(\mathbb{R}, \uB \restrict \alpha)^{P_\omega(\mathbb{R}^V)}$ would not be tall enough to reach a $\varphi$-witness, violating the elementarity of $\pi_\omega$.

For every such genericity iteration, the ordinal $\alpha$ is definable from the set $D$ in $P_\omega(\mathbb{R}^V)$ as its Wadge rank, the ordinal $\beta$ is definable from $\alpha$ in $P_\omega(\mathbb{R}^V)$ as the least ordinal such that the model $L_\beta^{F_\alpha}(\mathbb{R}, \uB \restrict \alpha)^{P_\omega(\mathbb{R}^V)}$ contains a $\varphi$-witness, and in turn the least $\varphi$-witness $A$ is definable from $C$ in the model $L_\beta^{F_\alpha}(\mathbb{R}, \uB \restrict \alpha)^{P_\omega(\mathbb{R}^V)}$ by the same definition as in the model $L_\beta^{F_\alpha}(\mathbb{R}, \uB \restrict \alpha)^V$,which is the same model. This implies that $A \in \Hom_{\mathord{<}\lambda}$ by Lemma \ref{lem-absolutely-definable} as desired.

\section{Suslin sets in the derived model}\label{sec-DM-at-limit-of-lt-lambda-strongs}

This section is mostly an exposition of the following theorem regarding the derived model $D(V,\lambda,G)$, which was defined in Remark \ref{oldnewrem}. A proof of the corresponding theorem (also due to Woodin) for the old dervied model $L(\mathbb{R}^*_G, \Hom^*_G)$ can be found in Steel \cite[Section 9]{SteDMT}.




\begin{thm}[Woodin]\label{thm-DM-at-limit-of-lt-lambda-strongs}
 Let $\lambda$ be a limit of Woodin cardinals and of $\mathord{<}\lambda$-strong cardinals. Let $G \subseteq \Col(\omega,\mathord{<}\lambda)$ be a $V$-generic filter. Then the derived model $D(V,\lambda,G)$ satisfies the statement that every set of reals is Suslin.
\end{thm}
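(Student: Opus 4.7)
The plan is to adapt the argument Steel gives in Section 9 of \cite{SteDMT} for the old derived model $L(\mathbb{R}^*_G, \Hom^*_G)$, proceeding by induction on the Wadge rank of sets in $D(V,\lambda,G)$. The goal is to show, for each $A \in D(V,\lambda,G)$, that there is a tree $T$ on $\omega \times \Ord$ lying in $D(V,\lambda,G)$ with $\p[T] = A$.

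First, I would use $\AD^+$ to convert $A$ into an $\infty$-Borel representation. Since $D(V,\lambda,G) \models \AD^+$ (by the theorem from Remark \ref{oldnewrem}), every $A \in D(V,\lambda,G)$ admits an $\infty$-Borel code: a pair $(S,\varphi)$ with $S$ a set of ordinals and $\varphi$ a formula of set theory such that $x \in A \iff L[S,x] \models \varphi[S,x]$ for all $x \in \mathbb{R}^*_G$. By the standard analysis of $\AD^+$ (see Chapter 8 of \cite{Larson:Extensions}), the code can be chosen so that $S$ is coded by a set of reals $B$ that is essentially Wadge-below $A$. Assuming inductively that such $B$ is Suslin and co-Suslin in $D(V,\lambda,G)$, we have $B \in \ScS^{D(V,\lambda,G)} = \Hom^*_G$, and hence $B$ is the projection of a $\lambda$-absolutely complementing pair of trees $(U,\tilde{U}) \in V[G\restrict\xi]$ for some $\xi < \lambda$.

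Next, I would convert the $\infty$-Borel representation into a Suslin tree using $\less\lambda$-strongness. Working in $V[G\restrict\xi]$, I would fix a $\less\lambda$-strong cardinal $\kappa$ above the rank of $(U,\tilde{U})$ together with a Woodin cardinal $\delta$ strictly between $\kappa$ and $\lambda$. For each $\eta$ with $\delta < \eta < \lambda$, $\less\lambda$-strongness of $\kappa$ supplies an extender $E_\eta$ of critical point $\kappa$ with $j_{E_\eta}(\kappa) > \eta$ and $V_\eta \subseteq \Ult(V,E_\eta)$. The desired Suslin tree $T$ would have nodes coding triples $(x\restrict n,\, \sigma\restrict n,\, \tau\restrict n)$, where $\sigma$ describes a wellfounded model $N$ obtained via ultrapowers by the $E_\eta$'s that absorbs $(U,\tilde{U})$ along with a Vop\v{e}nka-style generic filter over $N$ producing $x$ from $B$, and $\tau$ encodes a computation in $N[x]$ verifying $L[S,x] \models \varphi[S,x]$. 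Elementarity of the extender embeddings together with the $\lambda$-absolute complementation of $(U,\tilde{U})$ should guarantee that $\p[T] = A$, and a straightforward absoluteness analysis places $T$ in $D(V,\lambda,G)$.

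The main obstacle I anticipate is justifying the inductive step: verifying that the $\infty$-Borel code of an arbitrary $A \in L(\Gamma_+, \mathbb{R}^*_G)$ can indeed be chosen to lie strictly lower in the Wadge hierarchy than $A$, so that the inductive hypothesis applies and gives $B \in \Hom^*_G$. For the old derived model $L(\mathbb{R}^*_G, \Hom^*_G)$ this is nearly automatic, since every set there is ordinal-definable from a $\Hom^*_G$ parameter; for the new derived model, a finer $\AD^+$-theoretic argument is needed, using either Suslin-capture technology or a direct analysis of generic $\infty$-Borel codes to control the Wadge complexity of the code. Once this is in hand, the extender-movement construction of the preceding paragraph proceeds essentially as in Steel's treatment, and the induction closes off to yield the theorem.
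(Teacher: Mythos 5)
You are pursuing a genuinely different route from the paper, and your own closing paragraph correctly identifies the place where it breaks down, but you understate the severity: the gap is not a missing refinement but a circularity that the inductive strategy cannot escape. To run the induction on Wadge rank, you need the $\infty$-Borel code of $A$ (more precisely, the set of reals $B$ coding it) to have strictly smaller Wadge rank than $A$. The statement you are invoking from Section~8.2 of Larson's book has as a hypothesis that $A$ is \emph{already Suslin}: it is from a tree witnessing Suslinness that one extracts an $\infty$-Borel code that is $\Sigma^1_1$ in a prewellordering Wadge-below $A$ or its complement. For an arbitrary $A \in D(V,\lambda,G)$, $\AD^+$ only gives you that $A$ is $\infty$-Borel; it gives no control over the Wadge rank of the code, and indeed the code may sit at or above the level of $A$. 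Steel's Section~9 argument works for the old derived model $L(\mathbb{R}^*_G,\Hom^*_G)$ precisely because every set there is ordinal-definable from a $\Hom^*_G$ parameter by construction, so the code can be taken from $\Hom^*_G$ without any induction. For $D(V,\lambda,G)=L(\Gamma_+,\mathbb{R}^*_G)$ this is no longer free: it would amount to knowing $\Gamma_+ = \Hom^*_G$, which is essentially the conclusion of the theorem (cf.\ Remark~\ref{oldnewequivrem}). The gestures toward ``Suslin-capture technology'' or ``generic $\infty$-Borel codes'' do not say how to break the circle.

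The paper avoids this entirely. It does no induction and never touches $\infty$-Borel codes. Instead it argues by contradiction: if not every set in $D(V,\lambda,G)$ is Suslin, then (since $\AD^+$ holds and the Suslin cardinals are closed in $\Theta$) there is a largest Suslin cardinal, the pointclass of Suslin sets is non-selfdual and has the prewellordering property, and hence a complete Suslin set $A$ carries a Suslin-norm. The key step is Lemma~\ref{lem-suslin-norm-co-suslin}, which uses only the hypothesis that $\lambda$ is a limit of $\mathord{<}\lambda$-strong cardinals: in $V(\mathbb{R}^*_G)$ any Suslin set admitting a Suslin-norm is also co-Suslin. This puts $A$ into $\Hom^*_G$, hence $A$ is $\lambda$-universally Baire in some $V[G\restriction\xi]$; the scale/semiscale machinery for $\uB_\lambda = \Hom_{\mathord{<}\lambda}$ (Steel's Theorem~5.3, using that $\lambda$ is a limit of Woodin cardinals) then produces a $\Hom^*_G$-coded semiscale on $\omega^\omega\setminus A$, whose tree lies in $D(V,\lambda,G)$ and witnesses that $A$ is co-Suslin there, contradicting non-selfduality. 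The Vop\v{e}nka-plus-extender machinery you sketch is genuine technology, but here it is both unnecessary and insufficient; the prewellordering-property argument plus the $\mathord{<}\lambda$-strongness lemma is what actually closes the proof.
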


\begin{rem}\label{oldnewequivrem}
As shown at the beginning of Section \ref{amodelsec}, $\Hom^{*}_{G}$ is equal to the set of subsets of $\mathbb{R}^{*}_{G}$ in $V(\mathbb{R}^{*}_{G})$ which are Suslin and co-Suslin in $V(\mathbb{R}^{*}_{G})$. It follows from Theorem \ref{thm-DM-at-limit-of-lt-lambda-strongs} then that (assuming its hypotheses)  every set of reals in $D(V, \lambda, G)$ is in $\Hom^{*}_{G}$, so $L(\mathbb{R}^{*}_{G}, \Hom^{*}_{G}) = D(V, \lambda, G)$. 
\end{rem} 

The proof we give for Theorem \ref{thm-DM-at-limit-of-lt-lambda-strongs} is based on one by Steel (unpublished, but see notes by Zeman \cite[pp.~27--28]{ZemNotes}), with a couple of modifications. One modification is that we separate the argument into two parts. The first part is Lemma \ref{lem-suslin-norm-co-suslin} below, which uses only the assumption that $\lambda$ is a limit of $\mathord{<}\lambda$-strong cardinals, and which may be of independent interest. The remainder of the argument will use (in addition to the conclusion of Lemma \ref{lem-suslin-norm-co-suslin}) only the assumption that $\lambda$ is a limit of Woodin cardinals. This large cardinal hypothesis implies that  $\Hom^{*}_{G} \subseteq \Gamma_{+}$ (by the version of the derived model theorem which appears as Theorem 7.1 in \cite{SteDMT}) and that $D(V, \lambda, G)$ satisfies $\AD^{+}$, which is the first part of Theorem 31 of  \cite{WooSEMI}.

The second modification is that we consider Suslin-norms (as defined below) rather than Suslin wellfounded relations as in Steel's proof.  This seems to be necessary in order to split up the proof as we have done.


Given a norm $\varphi$ on a set of reals $A$, the \emph{code} of $\varphi$ is the pair of binary relations $(R_\le, R_<)$ defined by
\begin{align*}
 R_\le &= \{(x,y) \in \omega^\omega \times \omega^\omega : x \in A \wedge (y \in A \rightarrow \varphi(x) \le \varphi(y))\}\\
 R_<   &= \{(x,y) \in \omega^\omega \times \omega^\omega : x \in A \wedge (y \in A \rightarrow \varphi(x) < \varphi(y))\}.
\end{align*}
Note that the property of being a code for some norm on $A$ can be expressed as the conjunction of the following conditions:
 \begin{itemize}
  \item $R_\le \cap (A \times A)$ is a prewellordering on $A$,
  \item $R_< \cap (A \times A)$ is the strict part of this prewellordering,
  \item $(A \times \neg A) \subseteq R_\le \subseteq (A \times \omega^\omega)$ and
  \item $(A \times \neg A) \subseteq R_< \subseteq (A \times \omega^\omega)$.
 \end{itemize}
Two norms on a set $A$ are said to be \emph{equivalent} if their codes are the same.  Every norm is equivalent to a unique \emph{regular norm}, a norm whose range is an ordinal.
An \emph{initial segment} of a set $A$ under a norm $\varphi$ on $A$ is a set of the form $\{x \in A : \varphi(x) < \xi\}$ for some ordinal $\xi$.

 We will need the following fact about extending norms.  The proof is standard and is left to the reader.
 
 \begin{lem}\label{lem-extending-norms}
  Let $\varphi$ be a regular norm on a set of reals $A$ and let $\varphi'$ be a regular norm on a set of reals $A'$.
  Let  $(R_\le, R_<)$ and $(R'_\le, R'_<)$ denote the pairs of relations coding $\varphi$ and $\varphi'$ respectively.
  If $A \subseteq A'$, $R_\le \subseteq R_\le'$, and $R_< \subseteq R_<'$,
  then the set $A$ is an initial segment of the set $A'$ under its norm $\varphi'$, and we have $\varphi = \varphi' \restrict A$.
 \end{lem}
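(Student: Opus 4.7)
The plan is to extract two separate consequences from the hypotheses on the codes: first that $A$ sits as an initial segment of $A'$ under $\varphi'$, and second that the two prewellorderings agree on $A$; then combine these to conclude that the regular norm values match.

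For the initial-segment property, I would use the definition of the code to observe that whenever $x \in A$ and $y \notin A$, the pair $(x,y)$ automatically belongs to $R_<$ (the clause $y \in A \to \varphi(x) < \varphi(y)$ is vacuous). Applying $R_< \subseteq R_<'$ in the case $y \in A' \setminus A$, we get $(x,y) \in R_<'$, and since both $x$ and $y$ now lie in $A'$, this unpacks to $\varphi'(x) < \varphi'(y)$. Thus every element of $A$ has strictly smaller $\varphi'$-value than every element of $A' \setminus A$, which is exactly the assertion that $A$ is an initial segment of $A'$ under $\varphi'$.

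For the agreement of prewellorderings on $A$, I would note that on $A \times A$ the relations $R_\le$ and $R_<$ capture $\varphi(x) \le \varphi(y)$ and $\varphi(x) < \varphi(y)$ respectively, and similarly for the primed versions inside $A' \times A' \supseteq A \times A$. The forward direction $\varphi(x) \le \varphi(y) \Rightarrow \varphi'(x) \le \varphi'(y)$ is immediate from $R_\le \subseteq R_\le'$, and the converse follows by contraposition using $R_< \subseteq R_<'$: if $\varphi(y) < \varphi(x)$ then $(y,x) \in R_<$, hence $(y,x) \in R_<'$, so $\varphi'(y) < \varphi'(x)$. Hence the prewellorderings induced by $\varphi$ and by $\varphi' \restriction A$ on $A$ coincide.

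To finish, because $A$ is an initial segment of $A'$ under $\varphi'$, the $\varphi'$-rank of any $x \in A$ computed inside $A'$ equals its rank computed inside $A$; and regularity of $\varphi$ and $\varphi'$ means each norm equals the rank function of its own prewellordering. Combining these observations with the equality of prewellorderings on $A$ yields $\varphi'(x) = \varphi(x)$ for every $x \in A$. There is no real obstacle here; the only subtle point is to be careful about \emph{where} a rank is being computed (in $A$ versus in $A'$), which is precisely what the initial-segment step secures before one attempts to identify the two norms.
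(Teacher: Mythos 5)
Your argument is correct and complete. The paper itself does not give a proof of this lemma, stating only that it is standard and left to the reader; your three-step decomposition (initial-segment property from $R_< \subseteq R_<'$ applied to $x \in A$, $y \in A' \setminus A$; agreement of the prewellorderings on $A$ from both inclusions; identification of the regular norms via rank functions once the initial-segment property guarantees $\varphi' \restrict A$ is itself regular) is exactly the standard proof the authors have in mind.
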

 
%

If $\Gamma$ is a pointclass, a $\Gamma$-\emph{norm} on a set of reals $A \in \Gamma$ is a norm $\varphi$ on $A$ with the property that its code $(R_\le,R_<)$ is in $\Gamma$, meaning that both relations $R_\le$ and $R_<$ are continuous preimages of sets in $\Gamma$. This is equivalent to the usual definition \cite[p.~153]{MosDST2009} if $\Gamma$ is an adequate pointclass.  All of the pointclasses that we will consider are adequate.

\begin{rem}\label{initsegrem}
For pointclasses $\Gamma$ closed under unions, intersections and continuous preimages (as all the pointclasses we consider are) if $\varphi'$ is a $\Gamma$-norm on a set $A' \in \Gamma$, then every proper initial segment $A$ of $A'$ under $\varphi'$ is in $\Delta = \Gamma \cap \check{\Gamma}$, and the corresponding restricted norm $\varphi' \restrict A$ is a $\Delta$-norm (see Remark 4.13 of \cite{Larson:Extensions}, for instance).
\end{rem}


By a \emph{Suslin-norm} we mean a $\Gamma$-norm where $\Gamma$ is the pointclass of Suslin sets. Given a cardinal $\lambda$, a $\uB_{\lambda}$-\emph{norm} is a $\Gamma$-norm where $\Gamma$ is the pointclass of $\lambda$-universally Baire sets.


For a pointset $R$, a $\posSigmaoneone(R)$-\emph{statement} 
is one of the form 
\[\exists y\,\big(Q(y) \And \forall n < \omega\, (y)_n \in R)\]
where $Q$ is a $\Sigma^1_1$ formula and we think of a real $y$ as coding a sequence of reals $(y)_0,(y)_1,(y)_2,\ldots$ in some recursive way (see, for instance, Moschovakis \cite[Lemma 7D.7]{MosDST2009} or page 21 of \cite{Larson:Extensions}).
For pointsets $R_1,\ldots,R_n$, we define $\posSigmaoneone(R_1,\ldots,R_n)$ as $\posSigmaoneone(R)$ where $R$ is the disjoint union of the sets $R_1,\ldots,R_n$.

If $R$ is a binary relation then the statement that $R$ is illfounded can be expressed by a $\posSigmaoneone(R)$ statement, and the statement that $R$ is not a prewellordering can be expressed by a  $\posSigmaoneone(R,\neg R)$ statement.
In addition, one can say (quite trivially) that $A \cap B \ne \emptyset$ with a $\posSigmaoneone(A,B)$ statement.

For any trees $T_1,\ldots,T_n$,
by $\posSigmaoneone(\text{p}[T_1],\ldots,\text{p}[T_n])$ \emph{generic absoluteness}
we mean the fact that, for any generic extension $V[g]$, any given $\posSigmaoneone$ statement about the sets $\p[T_1]^{V[g]},\ldots,\p[T_n]^{V[g]}$ holds in $V[g]$ if and only if the corresponding $\posSigmaoneone$ statement about the sets $\p[T_1],\ldots,\p[T_n]$ holds in $V$.  
This fact is easily proved using the absoluteness of wellfoundedness for a certain tree built from $T_1,\ldots,T_n$ and a tree for a $\posSigmaoneone$ set.  (There is nothing special about the case of generic extensions versus arbitrary outer models here, but it is the only case we will need.)
This generic absoluteness property encompasses many of the common absoluteness arguments involving trees.
It can be used to show, for example, that if $R$ is a universally Baire wellfounded relation (or prewellordering) then the canonical expansion $R^{V[g]}$ is a universally Baire wellfounded relation (or prewellordering).
It can also be used to show that the canonical expansions of universally Baire sets are unique (which we did in Lemma \ref{lem-unique-expansion}). 

The following lemma is the first part of the proof of Theorem \ref{thm-DM-at-limit-of-lt-lambda-strongs}, as outlined above.

\begin{lem}\label{lem-suslin-norm-co-suslin}
 Let $\lambda$ be a limit of $\mathord{<}\lambda$-strong cardinals and let $G \subseteq \Col(\omega,\mathord{<}\lambda)$ be a $V$-generic filter.  Then in the symmetric extension $V(\mathbb{R}^*_G)$, every Suslin set that admits a Suslin-norm is co-Suslin.
 \end{lem}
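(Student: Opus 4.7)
The plan is to pull the Suslin representations of $A$ and of the code of $\varphi$ into a ZFC model $V[G\restrict\xi]$, and then to use a $\mathord{<}\lambda$-strong cardinal of $V[G\restrict\xi]$ to produce a tree whose projection in $V(\mathbb{R}^*_G)$ is $\omega^\omega \setminus A$.

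First, by the homogeneity of $\Col(\omega,\mathord{<}\lambda)$, I will choose $\xi < \lambda$ and trees $T, T_\le, T_< \in V[G\restrict\xi]$ whose projections in $V(\mathbb{R}^*_G)$ are respectively $A$, $R_\le$, and $R_<$, where $(R_\le, R_<)$ is the code of $\varphi$. By the absoluteness of ill-foundedness for trees on $\omega \times \Ord$, the ground-model projections $A_0, R_{\le,0}, R_{<,0}$ are exactly the traces of their $V(\mathbb{R}^*_G)$-counterparts on $V[G\restrict\xi]$, so the norm axioms (transitivity, totality, wellfoundedness of the strict part, and the containment conditions in the definition of the code) transfer downward: $(R_{\le,0}, R_{<,0})$ codes a norm $\varphi_0$ of some length $\gamma_0$ on $A_0$ inside $V[G\restrict\xi]$.

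Next, since $\Col(\omega,\xi)$ is small forcing, $\lambda$ remains a limit of $\mathord{<}\lambda$-strong cardinals in $V[G\restrict\xi]$. I will choose such a cardinal $\kappa$ above $\xi$ (and, by padding the trees if necessary, arrange that the trees have ordinal support extending above $\kappa$). For each sufficiently large $\eta < \lambda$, let $j_\eta \colon V[G\restrict\xi] \to M_\eta$ witness $\eta$-strongness of $\kappa$, so that $\crit(j_\eta) = \kappa$, $j_\eta(\kappa) > \eta$, and $V_\eta^{V[G\restrict\xi]} \subseteq M_\eta$. By elementarity, $j_\eta(T), j_\eta(T_\le), j_\eta(T_<)$ witness in $M_\eta$ that $j_\eta(A_0) := \p[j_\eta(T)]^{M_\eta}$ admits a Suslin-norm $j_\eta(\varphi_0)$ of length $j_\eta(\gamma_0)$, and by Lemma \ref{lem-extending-norms} applied in $M_\eta$ this norm extends $\varphi_0$ so that $A_0$ sits as an initial segment of $j_\eta(A_0)$.

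Then I would construct a tree $U$ on $\omega \times j_\eta(\gamma_0)$ in $V(\mathbb{R}^*_G)$ whose nodes $(s,\sigma)$ encode, via finite initial segments of the six trees $T, T_\le, T_<, j_\eta(T), j_\eta(T_\le), j_\eta(T_<)$ and finitely many ordinal parameters, strict descents $\sigma(n) < \sigma(n-1) \in j_\eta(\gamma_0)$ witnessing that every $y \in j_\eta(A_0)$ extending $s$ has $j_\eta(\varphi_0)$-rank strictly exceeding $\sigma(|s|-1)$. A branch $(x,\tau) \in [U]$ then produces an infinite strict descent in $j_\eta(\gamma_0)$, forcing $x \notin j_\eta(A_0)$; combined with the inclusion $A \subseteq j_\eta(A_0)$ (which holds by the elementarity of $j_\eta$ fixing reals and absoluteness of projections), this yields $x \notin A$. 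Conversely, if $x \notin A$, the $\mathord{<}\lambda$-strongness of $\kappa$ and the closure of $M_\eta$ provide the required ordinals $\tau(n)$ stage by stage.

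The hard part will be the precise combinatorial definition of $U$: each finite level of $U$ must be determined by bounded tree data so that $U$ is a genuine tree in $V(\mathbb{R}^*_G)$, while the descent conditions must be tight enough to exactly characterize non-membership in $A$. This demands a careful Martin--Solovay-style bookkeeping that fuses the prewellordering supplied by $R_\le$ with the Suslin representation of $A$, transferring the characterization between $V[G\restrict\xi]$, $M_\eta$, and $V(\mathbb{R}^*_G)$ throughout via absoluteness of tree-projections.
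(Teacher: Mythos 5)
Your intuition is on the right track, but there are several gaps that prevent the proposal from going through, and the approach is less modular than the paper's.

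The central difficulty is that you fix a \emph{single} $\eta < \lambda$ and a single $\eta$-strongness embedding $j_\eta$, and then try to build a tree $U$ in $V(\mathbb{R}^*_G)$ from $j_\eta(T), j_\eta(T_\le), j_\eta(T_<)$. But a single such $\eta$ only gives information that is correct for reals appearing by stage roughly $\eta$ of the collapse; $\mathbb{R}^*_G$ contains reals from $V[G\restrict\zeta]$ for all $\zeta < \lambda$, and for $\zeta > \eta$ there is no reason the projections of $j_\eta(T)$ and its complement pair behave correctly in $V[G\restrict\zeta]$. The paper deals with this by letting $\eta$ range over all of $\lambda$, establishing that $A_0 = \p[T]^{V[G\restrict\alpha]}$ (with $\alpha = 2^{2^\kappa}$) is $\eta$-universally Baire for \emph{every} $\eta < \lambda$ and hence $\lambda$-universally Baire in the ZFC model $V[G\restrict\alpha]$, and only then expanding to $V(\mathbb{R}^*_G)$ via $\lambda$-absolutely complementing trees. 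That amalgamation step (turning ``$\eta$-uB for all $\eta < \lambda$'' into ``$\lambda$-uB,'' which requires AC and so must happen inside $V[G\restrict\alpha]$ rather than $V(\mathbb{R}^*_G)$) is absent from your argument.

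Second, the key external result you would need — that $\p[j(T)]^{V[G\restrict\alpha]}$ is $\eta$-universally Baire when $j$ is an appropriate strongness embedding (Woodin's theorem, Steel \cite[Theorem 4.5]{SteDMT}) — is never invoked. You observe by elementarity that $j_\eta(\varphi_0)$ extends $\varphi_0$, but you never convert that into a universally Baire representation, which is what makes the expansion to the symmetric model possible.

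Third, the ``Martin--Solovay-style bookkeeping'' you describe for the tree $U$, which you yourself flag as ``the hard part,'' is precisely what the argument is missing. Encoding strict descents in $j_\eta(\gamma_0)$ is the right classical idea (essentially the Kunen--Martin bound), but that argument needs DC / a wellordered length and a proof that both directions of the characterization hold for \emph{all} $x \in \mathbb{R}^*_G$; as written, the ``conversely'' direction (producing a branch of $U$ from $x \notin A$) leans on ``the closure of $M_\eta$'' without explaining how the choices are made in the symmetric (non-AC) model. The paper circumvents the direct tree construction entirely: it shows $A$ is an initial segment of a universally Baire set $A^*$ under a universally Baire norm $\varphi^*$ (via Lemma \ref{lem-extending-norms} and $\posSigmaoneone$ generic absoluteness), then concludes by Remark \ref{initsegrem} that $A \in \ScS$. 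That indirect route avoids the combinatorics your sketch leaves open.

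In short: correct intuition about strongness embeddings extending norms, but the proposal lacks (i) the range over $\eta < \lambda$ and the amalgamation inside a ZFC intermediate model, (ii) the appeal to Woodin's/Steel's theorem that strongness images of trees are universally Baire, and (iii) an actual construction or an avoidance of the tree $U$; any one of these is a genuine gap.
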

 
\begin{proof}
 Let $A$ be a Suslin set in $V(\mathbb{R}^*_G)$ that admits a regular Suslin-norm $\varphi$, and let $R_\le$ and $R_<$ be the Suslin relations coding $\varphi$. Let $T$, $T_\le$, and $T_<$ be trees in $V(\bbR^{*}_{G})$ projecting to $A$, $R_\le$, and $R_<$ respectively.  By the usual homogeneity argument $T$, $T_\le$, and $T_<$ are in $V[G \restrict \xi]$ for some $\xi < \lambda$. Let $\kappa < \lambda$ be a $\less\lambda$-strong cardinal in $V[G \restrict \xi]$. 
 
 In any generic extension of $V[G \restrict \xi]$ by a poset of size less than $\lambda$, the pair of relations $(\p[T_\le], \p[T_<])$ codes a norm on the set $\p[T]$, since (for instance) the properties of $(A, R_\le, R_<)$ in $V(\mathbb{R}^*_G)$ that ensure coding a norm are preserved under restriction to the reals of a smaller universe.
 
  Let $\alpha = 2^{2^\kappa}$. Define the set $A_0 = \p[T]^{V[G \restrict \alpha]}$ and the pair of relations $(R^{0}_{\le}, R^{0}_{<}) = (\p[T_\le], \p[T_<])^{V[G \restrict \alpha]}$. Then the pair $(R^{0}_{\le}, R^{0}_{<})$ codes a norm $\varphi_0$ on the set $A_0$.
 
 
 \begin{claim*}
  In $V[G \restrict \alpha]$,
  the set $A_0$ is $\lambda$-universally Baire and the norm $\varphi_0$ on $A_0$ is a $\uB_{\lambda}$-norm.
 \end{claim*}
 
 \begin{proof}
  Fix $\eta < \lambda$. We will show that the set $A_0$ is $\eta$-universally Baire and the norm $\varphi_0$ on $A_0$ is a $\uB_{\eta}$-norm.
  By increasing $\eta$ we may assume that $\alpha < \eta$ and $\left|V_\eta\right| = \eta$.
  Take an elementary embedding $j \colon V[G \restrict \xi] \to M$ with critical point $\kappa$, where $M$ is transitive, $M^\omega \subseteq M$, $V_\eta[G \restrict \xi] \subseteq M$, and $\eta < j(\kappa)$. 
   
  
  Let $A_\eta = \p[j(T)]^{V[G \restrict \alpha]}$, $R_{\le}^{\eta} = \p[j(T_\le)]^{V[G \restrict \alpha]}$ 
  and $R_{<}^{\eta} = \p[j(T_<)]^{V[G \restrict \alpha]}$.
  Then $A_\eta$, $R_{\le}^{\eta}$ and $R_{<}^{\eta}$ are $\eta$-universally Baire in $V[G \restrict \alpha]$ by a theorem of Woodin (see Steel \cite[Theorem 4.5]{SteDMT}.)
  
  In any generic extension of $M$ by a poset of size less than $j(\lambda)$, the pair of relations $(\p[j(T_\le)], \p[j(T_<)])$ codes a norm on the set $\p[j(T)]$, by the elementarity of $j$.
  Since $\alpha < j(\lambda)$ and the models $M[G \restrict \alpha]$ and $V[G \restrict \alpha]$ have the same reals, it follows that the pair $(R_{\le}^{\eta}, R_{<}^{\eta})$ codes a regular norm $\varphi_\eta$ on the set $A_\eta$.
  We have then that $\varphi_\eta$ is a $\uB_{\eta}$-norm on $A_\eta$ in $V[G \restrict \alpha]$.
 
  Using $j$ to map branches of trees pointwise, we get $A_0 \subseteq A_\eta$, $R_{\le}^{0} \subseteq R_{\le}^{\eta}$, and $R_{<}^{0} \subseteq R_{<}^{\eta}$.
  By Lemma \ref{lem-extending-norms},
  these inclusions imply that the set $A_0$ is an initial segment of the set $A_\eta$ under its norm $\varphi_\eta$, and that the norm $\varphi_0$ on $A_0$ is the restriction of $\varphi_\eta$ to this initial segment.
  By Remark \ref{initsegrem}, in $V[G \restrict \alpha]$, the set $A_0$ is $\eta$-universally Baire and the norm $\varphi_0$ on $A_0$ is a $\uB_{\eta}$-norm. \end{proof}
  

 Now we complete the proof of Lemma \ref{lem-suslin-norm-co-suslin}. By the claim, we may fix $\lambda$-absolutely complementing pairs of trees $(T^*,\tilde{T})$, $(T_\le^*, \tilde{T}_\le)$, and $(T_<^*,\tilde{T}_<)$ in $V[G \restrict \alpha]$ for $A_0$, $R_{\le}^{0}$, and $R_{<}^{0}$ respectively, and
 define the canonical expansions $A^* = \p[T^*]^{V(\mathbb{R}^*_G)}$, $R_\le^* = \p[T_\le^*]^{V(\mathbb{R}^*_G)}$, and $R_<^* =  \p[T_<^*]^{V(\mathbb{R}^*_G)}$.
 
 Because the pair of relations $(R_{\le}^{0}, R_{<}^{0})$ codes a norm on the set $A_0$,
 it follows that the pair of relations $(R_\le^*, R_<^*)$ codes a regular norm $\varphi^*$ on the set $A^*$
 by generic absoluteness for 
 $\Sigma^1_1(\p[T^*], \p[\tilde{T}], \p[T_\le^*], \p[\tilde{T}_\le], \p[T_<^*], \p[\tilde{T}_<])$.
 
 
 
 We have $A \subseteq A^*$ by Lemma \ref{lem-projection-inclusion}, which can be seen as an instance of $\Sigma^1_1(\p[T], \p[\tilde{T}])$ generic absoluteness.
 Similarly $R_\le \subseteq R_\le^*$ and $R_< \subseteq R_<^*$.
 By Lemma \ref{lem-extending-norms} again, these inclusions imply that  the set $A$ is an initial segment of the set $A^*$ under the norm $\varphi^*$.  In $V(\mathbb{R}^*_G)$ the norm $\varphi^*$ is an $\ScS$-norm where $\ScS$ denotes the pointclass of Suslin co-Suslin sets, so its inital segment $A$ is a Suslin co-Suslin set as in Remark \ref{initsegrem}, as desired.
\end{proof}

\begin{rem}
	In \cite{SchindlerYasuda}, a pointclass $\Gamma$
	consisting of universally Baire sets of reals is defined to be \emph{productive} if it is closed under
	complements, projections, and satisfies that statement that, for all $A \subseteq \omega^{\omega} \times \omega^{\omega}$ in (or continuously reducible to a member of) $\Gamma$, in all set-generic forcing extensions $V[H]$ the sets $\{ y : \exists x \in \omega^{\omega} (x,y) \in A^{V[H]} \}$ and 
	$(\{y \in \omega^{\omega} : \exists x \in \omega^{\omega} (x,y) \in A\})^{V[H]}$ are equivalent. An argument very similar to the proof of Lemma \ref{lem-suslin-norm-co-suslin} shows that the collection of universally Baire sets of reals is productive in $V(\bbR^{*}_{G})$, assuming the hypotheses of the lemma. We conjecture that this fact (and in fact a stronger fact) already follows from just the theory $\AD^{+}$ +``All sets of reals are universally Baire". 

    More specifically, we conjecture that if $V$ is a model of $\AD^++``$All sets of reals are universally Baire" then for every set of reals $A$, in all set generic extensions $V[G*H]$, there is an elementary embedding 
    \begin{center} $j: L(A^{V[G]}, \mathbb{R}^{V[G]})\rightarrow L(A^{V[G*H]}, \mathbb{R}^{V[G*H]})$ \end{center}
    with $j(A^{V[G]})=A^{V[G*H]}$.
\end{rem} 

Before proceeding with the proof of Theorem \ref{thm-DM-at-limit-of-lt-lambda-strongs},
we make a remark about the necessity of assuming in Lemma \ref{lem-suslin-norm-co-suslin} that the Suslin set of reals $A$ admits a Suslin-norm.

\begin{rem}\label{rem-woodin-counterexample}
 If $\lambda$ is a Woodin cardinal then Lemma \ref{lem-suslin-norm-co-suslin} holds even without the hypothesis that the set has a Suslin-norm: every Suslin set in $V(\mathbb{R}^*_G)$ is co-Suslin by a theorem of Woodin (see Larson \cite[Theorem 1.5.12]{LarStationaryTower} or Steel \cite[Theorem 4.4]{SteDMT}). However, in our application we do not want to make the assumption that $\lambda$ is a Woodin cardinal; together with the assumption that it is a limit of Woodin cardinals, this would be significantly stronger than the hypothesis of the main theorem.
 
 On the other hand, if $\lambda$ is singular then the hypothesis of the existence of a Suslin-norm in Lemma \ref{lem-suslin-norm-co-suslin} is necessary: in $V(\mathbb{R}^*_G)$, there is a Suslin set of reals that is not co-Suslin. An example was pointed out to us by Woodin \cite{WooSuslinNotCoSuslinEmail}.
 We generalize this example to show that if $\mathsf{ZF}$ holds and $\omega_1$ is singular (as in the symmetric model obtained by the Levy collapse at a singular cardinal) then there is a Suslin set of reals that is not co-Suslin.
 
 Take a sequence of countable ordinals $\langle \alpha_n : n < \omega\rangle$ that is cofinal in $\omega_1$ and let $A$ be the set of all reals of the form $n^\frown x$, where $n < \omega$ and $x$ is a real coding a wellordering of $\omega$ of order type at most $\alpha_n$.
 This set $A$ is Suslin, as witnessed by the tree of attempts to build a real $n^\frown x$ and a function $f : \omega \to \alpha_n$ such that $x$ codes a linear ordering on $\omega$ and $f$ is strictly increasing with respect to this linear ordering.
 
 Assume toward a contradiction that the set $A$ is co-Suslin.
 The set of all reals $n^\frown x$ where $n < \omega$ and $x$ is a real coding \emph{any} wellordering of $\omega$
 is also Suslin, so the intersection of this set with the complement of $A$ is Suslin.
 
 In other words, the set of all reals $n^\frown x$ where $n < \omega$ and $x$ is a real coding a wellordering of $\omega$ of order type greater than $\alpha_n$ is Suslin.
 Then by considering leftmost branches we can choose, for each $n < \omega$, a real $x_n$ coding a wellordering of $\omega$ of order type greater than $\alpha_n$.  This is a contradiction because it allows us to define a wellordering of $\omega$ of order type $\omega_1$.

\end{rem}


\begin{proof}[Proof of Theorem \ref{thm-DM-at-limit-of-lt-lambda-strongs}]
 Working in the derived model $D(V,\lambda,G)$, suppose toward a contradiction that not every set of reals is Suslin. Since $\mathsf{AD}^+$ holds, the set of Suslin cardinals is closed in $\Theta$ (see Ketchersid \cite{KetMoreAD} or Larson \cite{Larson:Extensions} for a proof) so there is a largest Suslin cardinal $\kappa$ (by Corollary 6.19 of \cite{Larson:Extensions}). The pointclass of $\kappa$-Suslin sets (equivalently, of all Suslin sets) is non-selfdual, and it has the prewellordering property, meaning that every Suslin set has a Suslin-norm (see Jackson \cite[Lemma 3.6]{JacHandbook} and Remark 6.6 of \cite{Larson:Extensions}). 
 So we can fix a complete Suslin set $A$ of $D(V,\lambda,G)$ and note that a Suslin-norm on $A$ exists.

By Lemma \ref{lem-suslin-norm-co-suslin},
the set $A$ is Suslin and co-Suslin in $V(\mathbb{R}^*_G)$, which means that $A$ is in $\Hom^*_G$. We can fix then a $\xi < \lambda$ and $\lambda$-absolutely complementing trees $T_{A}$ and $\tilde{T}_{A}$ on $\omega \times \Ord$ in $V[G \restrict \xi]$ such that $A = \p[T_{A}]^{V(\bbR^{*}_{G})}$. Letting $A_{0} = \p[T_{A}]^{V[G \restrict \xi]}$, we have then that $A = A_0^{V(\mathbb{R}^*_G)}$. 
This means that 
$V[G\restrict \xi] \models \omega^\omega \setminus A_0 \in \uB_{\lambda}$ as well, so the set $\omega^\omega \setminus A_0$ has a $\uB_{\lambda}$-semiscale $\vec{\varphi}$ in $V[G \restrict \xi]$ by Steel \cite[Theorem 5.3]{SteDMT}. (This result uses the fact that $\uB_{\lambda} = \Hom_{\mathord{<}\lambda}$ because $\lambda$ is a limit of Woodin cardinals.  Also, the semiscale obtained by Steel is a scale, but we do not need the additional lower semicontinuity property of scales here.)
 
 As in Section \ref{semiscalesec}, to say that $\vec{\varphi}$ is a $\uB_{\lambda}$-semiscale means that
 its component norms $\varphi_n$ are $\uB_{\lambda}$-norms, uniformly in $n$.  (In fact the uniformity is automatic here because the pointclass $\uB_{\lambda}$ is closed under countable unions.)
 More precisely, the relation $R_0$ coding the semiscale $\vec{\varphi}$, defined by
 \[R_0 = \big\{ (\bar{n}, x, y) \in \omega^{\omega} \times (\omega^\omega \setminus A_0) \times (\omega^\omega \setminus A_0) : \varphi_n(x) \le \varphi_n(y) \big\}, \]
 is $\lambda$-universally Baire, where $\bar{n}$ indicates the constant function from $\omega$ to $n$.
Fix trees $T_{R}$ and $\tilde{T}_{R}$ in $V[G \restrict \xi]$ witnessing this, and let $R$ denote the expanded relation $R_{0}^{V(\bbR^{*}_{G})}$. 
 
 
The expanded relation $R$ codes a semiscale on the set $\omega^\omega \setminus A$, which was our universal co-Suslin set of the derived model.
One way to see this is to use the fact that $(\HC^{V[G \restrict \xi]}; \mathord{\in}, A_0,R_0) \prec (\HC^*_G; \mathord{\in}, A, R)$ because $\lambda$ is a limit of Woodin cardinals
 (see Steel \cite[Lemma 7.3]{SteDMT}.)
 Another way 
 is to apply $\posSigmaoneone(\p[T_A],\p[\tilde{T}_A],\p[T_R],\p[\tilde{T}_R])$ generic absoluteness.  
  
 Because the relation $R$ is in $\Hom^*_G$, it is in the derived model $D(V,\lambda,G)$.
 In $D(V,\lambda,G)$, the tree of the semiscale coded by $R$ witnesses that the set $\omega^\omega \setminus A$ is Suslin, i.e., that the set $A$ is co-Suslin, giving a contradiction.
\end{proof}


Finally, we establish the last remaining component required for the proof of the main theorem:

\begin{lem}\label{lem-absorption-of-uB}
Let $\lambda$ be a limit of Woodin cardinals and a limit of strong cardinals, and  let $G \subseteq \Col(\omega,\mathord{<}\lambda)$ be a $V$-generic filter.
Then every set of reals in $\Hom^*_G$ is universally Baire in $L^F(\mathbb{R}^*_G, \Hom^*_G)^{V(\mathbb{R}^*_G)}$.
\end{lem}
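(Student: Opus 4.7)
The plan is to apply Theorem \ref{FreflectsuBthrm} inside $V(\mathbb{R}^*_G)$ with $\Gamma = \Hom^*_G$; its conclusion is exactly the statement of Lemma \ref{lem-absorption-of-uB}. So the task reduces to verifying the three hypotheses of that theorem in $V(\mathbb{R}^*_G)$: that $\Hom^*_G \subseteq \uB^{V(\mathbb{R}^*_G)}$, that $\Hom^*_G$ is selfdual, and that $\Hom^*_G$ has the semiscale property.

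Two of these are essentially free. The inclusion $\Hom^*_G \subseteq \uB^{V(\mathbb{R}^*_G)}$ is exactly what is established just before and in Lemma \ref{lem-ordinal-ac-equivalent}: trees witnessing membership in $\Hom^*_G$ are pushed forward by extenders coming from $\mathord{<}\lambda$-many strong cardinals to obtain $\mathord{<}\Ord$-absolute complementation in $V(\mathbb{R}^*_G)$, and Lemma \ref{lem-ordinal-ac-equivalent} then upgrades this to full universal Baireness. Selfduality of $\Hom^*_G$ is immediate from its identification as $\ScS^{V(\mathbb{R}^*_G)}$, which was established at the beginning of Section \ref{amodelsec}.

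The substantive step, and the main obstacle, is the semiscale property. Given $A \in \Hom^*_G$, a standard homogeneity argument locates a $\lambda$-absolutely complementing pair of trees for $A$ in some $V[G \restrict \xi]$ with $\xi < \lambda$, so that $A_0 := \p[T_A]^{V[G \restrict \xi]}$ is $\lambda$-universally Baire in $V[G \restrict \xi]$ and $A = A_0^{V(\mathbb{R}^*_G)}$. Since $\lambda$ is a limit of Woodin cardinals in $V[G \restrict \xi]$, Steel's scale theorem \cite[Theorem 5.3]{SteDMT} — already invoked in the very same way in the proof of Theorem \ref{thm-DM-at-limit-of-lt-lambda-strongs} — produces a $\uB_\lambda$-semiscale on $A_0$ inside $V[G \restrict \xi]$ whose canonical code $R_0$ is $\lambda$-universally Baire there. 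The canonical expansion $R = R_0^{V(\mathbb{R}^*_G)}$ therefore lies in $\Hom^*_G$, so the only remaining point is to confirm that $R$ actually codes a semiscale on $A$ in $V(\mathbb{R}^*_G)$. For this I would apply Lemma \ref{expandscaleslemma} to propagate the semiscale-coding property from $V[G \restrict \xi]$ through the collapses $V[G \restrict \eta]$ ($\xi \le \eta < \lambda$) and thence to $V(\mathbb{R}^*_G)$; concretely, the ingredients of being a semiscale code (prewellorderings, wellfoundedness, convergence) are all expressible by wellfoundedness of trees definable from the coding trees, which is absolute under the relevant forcings.

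Finally, applying the same construction to $\omega^\omega \setminus A$, which lies in $\Hom^*_G$ by selfduality, produces a semiscale on the complement whose code is in $\Hom^*_G$ as well. This verifies the semiscale property and therefore, via Theorem \ref{FreflectsuBthrm}, the lemma.
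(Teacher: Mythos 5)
Your proposal is correct and follows essentially the same route as the paper's proof: both reduce Lemma \ref{lem-absorption-of-uB} to an application of Theorem \ref{FreflectsuBthrm} with $\Gamma = \Hom^*_G$, obtain the semiscale property from Steel's scale theorem \cite[Theorem~5.3]{SteDMT} via the argument already used in Theorem \ref{thm-DM-at-limit-of-lt-lambda-strongs}, and obtain $\Hom^*_G \subseteq \uB^{V(\mathbb{R}^*_G)}$ from the limit of strong cardinals together with Lemma \ref{lem-ordinal-ac-equivalent}. You spell out a few details the paper delegates to its citations (in particular the propagation of the semiscale-coding property from $V[G \restrict \xi]$ into $V(\mathbb{R}^*_G)$), but the logical skeleton is identical.
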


\begin{proof}
Let $A\in \Hom^*_G$, which means that $\omega^\omega \setminus A \in \Hom^*_G$ as well.
As in the proof of Theorem \ref{thm-DM-at-limit-of-lt-lambda-strongs},
it follows from Steel \cite[Theorem~5.3]{SteDMT} (using the fact that $\lambda$ is a limit of Woodin cardinals) that $A$ and its complement admit $\Hom^*_G$-semiscales.
By Lemma \ref{lem-ordinal-ac-equivalent} and the discussion preceding it, the fact that $\lambda$ is a limit of strong cardinals implies that every set in $\Hom^*_G$ is universally Baire in $V(\mathbb{R}^*_G)$.
So in $V(\mathbb{R}^*_G)$ the set $A$ is universally Baire and there are $\uB$-semiscales on $A$ and $\omega^\omega \setminus A$. By Theorem \ref{FreflectsuBthrm}, then, with $\Gamma = \Hom^{*}_{G}$, $A$ is universally Baire in $L^F(\mathbb{R}^*_G, \Hom^*_G)^{V(\mathbb{R}^*_G)}$. 
\end{proof}

\section*{Acknowledgements}
The authors thank John Steel for permission to include
an adaptation of his unpublished argument for Theorem \ref{thm-DM-at-limit-of-lt-lambda-strongs}, and Hugh Woodin for pointing out the result mentioned in Remark \ref{rem-woodin-remark}.

\bibliography{math}{}

\begin{thebibliography}{10}

\bibitem{CLSSSZSquares}
Andres Caicedo, Paul Larson, Grigor Sargsyan, Ralf Schindler, John Steel, and
  Martin Zeman.
\newblock Square principles in $\mathbb{P}_\text{max}$ extensions.
\newblock {\em arXiv preprint arXiv:1205.4275}, 2012.

\bibitem{FenMagWoo}
Qi~Feng, Menachem Magidor, and W.~Hugh Woodin.
\newblock Universally {B}aire sets of reals.
\newblock In {\em Set theory of the continuum}, pages 203--242. Springer, 1992.

\bibitem{JacHandbook}
Stephen~C. Jackson.
\newblock Structural consequences of {AD}.
\newblock In Matthew Foreman and Akihiro Kanamori, editors, {\em Handbook of
  Set Theory}, volume~3. Springer, 2010.

\bibitem{Jec2002}
Thomas Jech.
\newblock {\em Set Theory}.
\newblock Springer Monographs in Mathematics. Springer--Verlag, third
  millennium edition, 2002.

\bibitem{KanHigherInfinite}
Akihiro Kanamori.
\newblock {\em The Higher Infinite: Large Cardinals in Set Theory from Their
  Beginnings}.
\newblock Springer Monographs in Mathematics. Springer, 2003.

\bibitem{KKMW}
Alexander~S. Kechris, Eugene~M. Kleinberg, Yiannis~N. Moschovakis, and W.~Hugh
  Woodin.
\newblock The axiom of determinacy, strong partition properties and nonsingular
  measures.
\newblock In {\em Games, scales, and Suslin cardinals. The Cabal Seminar, Vol.
  I. Reprints of papers and new material based on the Los Angeles Caltech-UCLA
  Logic Cabal Seminar 1976--1985}, pages 333--354. Cambridge: Cambridge
  University Press; Urbana, IL: Association for Symbolic Logic (ASL), 2008.

\bibitem{KecMosScales}
Alexander~S. Kechris and Yiannis~N. Moschovakis.
\newblock Notes on the theory of scales.
\newblock In Alexander~S. Kechris, Benedikt L\"owe, and John~R. Steel, editors,
  {\em Games, Scales and {S}uslin Cardinals: The Cabal Seminar, Volume {I}},
  volume~31 of {\em Lecture Notes in Logic}, pages 28--74. Cambridge University
  Press, Cambridge, 2008.

\bibitem{KetMoreAD}
Richard Ketchersid.
\newblock More structural consequences of $\mathrm{AD}$.
\newblock In L.~Babinkostova, A.E. Caicedo, S.~Geschke, and M.~Scheepers,
  editors, {\em Set Theory and its Applications: Annual Boise Extravaganza in
  Set Theory}, volume 533 of {\em Contemporary Mathematics}, pages 71--105.
  American Mathematical Society, 2011.

\bibitem{LarStationaryTower}
Paul~B. Larson.
\newblock {\em The stationary tower: Notes on a course by {W}.~{H}ugh
  {W}oodin}, volume~32 of {\em University Lecture Series}.
\newblock American Mathematical Society, 2004.

\bibitem{Larson:Extensions}
Paul~B. Larson.
\newblock {\em Extensions of the Axiom of Determinacy}, volume~78 of {\em
  University Lecture Series}.
\newblock American Mathematical Society, 2023.

\bibitem{MarSteProjDet}
Donald~A Martin and John~R Steel.
\newblock A proof of projective determinacy.
\newblock {\em Journal of the American Mathematical Society}, 2(1):71--125,
  1989.

\bibitem{MarSteIterationTrees}
Donald~A. Martin and John~R. Steel.
\newblock Iteration trees.
\newblock {\em Journal of the American Mathematical Society}, pages 1--73,
  1994.

\bibitem{MarWooWeaklyHom}
Donald~A. Martin and W.~Hugh Woodin.
\newblock Weakly homogeneous trees.
\newblock In Alexander~S. Kechris, Benedikt L\"owe, and John~R. Steel, editors,
  {\em Games, Scales and {S}uslin Cardinals: The Cabal Seminar, Volume {I}},
  volume~31 of {\em Lecture Notes in Logic}, pages 421--438. Cambridge
  University Press, Cambridge, 2008.

\bibitem{MosDST2009}
Yiannis~N. Moschovakis.
\newblock {\em Descriptive Set Theory}, volume 155 of {\em Mathematical Surveys
  and Monographs}.
\newblock American Mathematical Society, Providence, Rhode Island, second
  edition, 2009.

\bibitem{Muller:consistency}
Sandra M\"{u}ller.
\newblock The consistency strength of determinacy when all sets are universally
  {B}aire.
\newblock 2024.
\newblock preprint.

\bibitem{NeeOptimalII}
Itay Neeman.
\newblock Optimal proofs of determinacy \textsc{II}.
\newblock {\em The Journal of Mathematical Logic}, 2(2):pp. 227--258, 2002.

\bibitem{NeeHandbook}
Itay Neeman.
\newblock Determinacy in $\textsc{L}(\mathbb{R})$.
\newblock In Matthew Foreman and Akihiro Kanamori, editors, {\em Handbook of
  Set Theory}, pages 1887--1950. Springer Netherlands, 2010.

\bibitem{LSA}
Grigor Sargsyan and Nam Trang.
\newblock {\em The {Largest} {Suslin} {Axiom}}, volume~56 of {\em Lect. Notes
  Log.}
\newblock Cambridge: Cambridge University Press, 2024.

\bibitem{Schindler:SetTheory}
Ralf Schindler.
\newblock {\em Set theory}.
\newblock Universitext. Springer, Cham, 2014.
\newblock Exploring independence and truth.

\bibitem{SchindlerYasuda}
Ralf Schindler and Taichi Yasuda.
\newblock Martin's maximum${}^{\ast, ++}_{\mathfrak{c}}$ in $\mathbb{P}_{\max}$
  extensions of strong models of determinacy, 2024.

\bibitem{SteSTFree}
John~R. Steel.
\newblock A stationary-tower-free proof of the derived model theorem.
\newblock In Su~Gao, Steve Jackson, and Yi~Zhang, editors, {\em Advances in
  Logic}, volume 425 of {\em Contemporary Mathematics}, pages 1--8. American
  Mathematical Society, 2007.

\bibitem{SteDMT}
John~R. Steel.
\newblock The derived model theorem.
\newblock In {\em Logic Colloquium 2006}, volume~32 of {\em Lecture Notes in
  Logic}, pages 280--327. Cambridge University Press, 2009.

\bibitem{WooSEMI}
W.~Hugh Woodin.
\newblock Suitable extender models {I}.
\newblock {\em Journal of Mathematical Logic}, 10(1,2):101--339, 2010.

\bibitem{WooSuslinNotCoSuslinEmail}
W.~Hugh Woodin.
\newblock Personal communication, November 26, 2014.

\bibitem{ZemNotes}
Martin Zeman.
\newblock Untitled.
\newblock
  \url{http://wwwmath.uni-muenster.de/logik/Personen/rds/notes_by_zeman_1.pdf},
  2010.
\newblock Notes from John Steel's lectures at the 2010 Conference on the Core
  Model Induction and Hod Mice at WWU M\"unster.

\end{thebibliography}
\bibliographystyle{plain}

\end{document}